\title{One sided extendability and $p$-continuous analytic capacities}
\author{E. Bolkas, V. Nestoridis, C. Panagiotis and M. Papadimitrakis}
\date{}
\titleformat*{\section}{\normalsize\bfseries}
\titleformat*{\subsection}{\Large\bfseries}
\titleformat*{\section}{\normalsize\bfseries}
\titleformat*{\subsection}{\Large\bfseries}
\newtheorem{theorem}{Theorem}[chapter]
\numberwithin{theorem}{section}
\newtheorem{defi}[theorem]{Definition}
\newtheorem{lemma}[theorem]{Lemma}
\newtheorem{prop}[theorem]{Proposition}
\newtheorem{corol}[theorem]{Corollary}
\theoremstyle{definition}
\newtheorem{remark}[theorem]{Remark}
\DeclareMathOperator\supp{supp}
\begin{document}
\maketitle

\noindent 
Dedicated to the memory of Professor Jean-Pierre Kahane.

\begin{abstract}
\noindent 
Using complex methods combined with Baire's Theorem we show that one-sided extendability, 
extendability and real analyticity are rare phenomena on various 
spaces of functions in the topological sense. These considerations led us to introduce the 
$p$-continuous analytic capacity and variants of it, $p\in \{0,1,2, \cdots\}\cup 
\{\infty\}$, for compact or closed sets in $\mathbb{C}$. We use these capacities in order to 
characterize the removability of singularities of functions in the spaces $ A^p $.
\end{abstract}

\noindent AMS Classification numbers: $30H05$, $53A04$, $30C85$\\

\noindent Key words and phrases: locally injective curve, Jordan curve, analytic curve, smooth 
curve, smooth function, extendability, real analyticity, 
continuous analytic capacity, Montel's Theorem, Poisson kernel, Oswood-Caratheodory Theorem, 
Baire's Theorem, generic property.

\section{Introduction}
In \cite{[1]} it is proven that the set $ X $ of nowhere analytic functions in $ C^{\infty}
([0,1]) $ contains a dense and $ G_{\delta} $ subset of $ 
C^{\infty}([0,1]) $. In \cite{[2]} using Fourier methods it is shown that $ X $ is itself a 
dense and $ G_{\delta} $ subset of $ C^{\infty}([0,1]) $. 
Furthermore, combining the above methods with Borel's Theorem (\cite{[3]}) and a version of 
Michael's Selection Theorem (\cite{[4]}) the above result has 
been extended to $ C^{\infty}(\gamma) $, where $ \gamma $ is any analytic curve. In the case where $\gamma$ is the unit circle $ T $ every function $ f \in 
C^{\infty}(T) $ can be written as a sum $ f=g+w $ where $ g $ belongs to $ A^{\infty}(D) $ and is holomorphic on the open unit disc $ D $ and very smooth up 
to the boundary and $ w $ has similar properties in $D^c$. Now if we assume that $f$ is extendable somewhere towards one side of $T$, say in $D^c$, then 
because $w$ is regular there, it follows that $g\in A^{\infty}(D)$ is extendable. But the phenomenon 
of somewhere extendability has been proven to be a rare phenomenon in the Fr{\'e}chet space $A^{\infty}(D)$ (\cite{[5]}). It follows that the phenomenon of one 
sided somewhere extendability is a rare phenomenon in $C^{\infty}(T)$ or more generally in $C^p(\gamma)$, $p\in \{\infty\}\cup \{0,1,2, \cdots\}$ for any 
analytic curve $ \gamma $ (\cite{[2]}).\\
After the preprint \cite{[2]} has been circulated, P. Gautier noticed that the previous result holds more generally for Jordan arcs without the 
assumption of analyticity of the curve. Indeed, applying complex methods appearing in the last section of \cite{[2]} we prove this result. It suffices to 
use the Oswood-Caratheodory Theorem combined with Montel's Theorem and the Poisson integral formula applied to the boundary values of bounded holomorphic 
functions in $H^{\infty}(D)$. In fact this complex method is most natural to our considerations of extendability, real-analyticity and one 
sided-extendability. The proofs are simplified and the results hold under much more general assumptions than the assumptions imposed by the Fourier method. 
This complex method is developed in the present paper.\\
In section $4$ we prove that extendability and real analyticity are rare phenomena in various spaces of functions on locally injective curves $\gamma$. For 
the real analyticity result we assume that $\gamma$ is analytic and the result holds in any $C^{k}(\gamma), k\in \{0,1,2,3, \cdots\}\cup \{\infty\}$ 
endowed with its nature topology. For the other results the phenomena are proven to be rare in $C^k(\gamma)$ provided that the locally injective curve 
$\gamma$ has smoothness at least of degree $k$.\\
In section $5$ initially we consider a finite set of disjoint curves $\gamma_1, \gamma_2, \cdots , \gamma_n$. Then in the case where $\gamma_1, \gamma_2, \cdots , \gamma_n$ are disjoint Jordan curves in $\mathbb{C}$ bounding a domain $\Omega$ of finite connectivity 
we consider the spaces $A^p(\Omega), p\in \{0,1,2,3, \cdots\}\cup \{\infty\}$ which by the maximum principle can be seen as function spaces on $ 
\partial\Omega=\gamma_1^*\cup\cdots\cup\gamma_n^* $. In these spaces we show that the above phenomena of extendability 
or real analyticity are rare phenomena. For the real analyticity result we assume analyticity of $ \partial\Omega $, but for the extendability result we do 
not need to assume any smoothness of the boundary.\\
In section $6$ we consider the one sided extendability from a locally injective curve $\gamma$ and we prove that this is a rare phenomenon in various spaces 
of functions. We construct a denumerable family $ G_n $ of Jordan domains $ G $ containing in their boundary a non-trivial arc $J$ of the image $\gamma^*$ 
of $\gamma$, such that each other domain $\Omega$ with similar properties contains some $G_n$. We show that the phenomenon of extendability is rare for 
each domain $G$. Then by denumerable union (or intersection of the complements) we obtain our result with the aid of Baire's Category Theorem. We mention 
that the one-sided extendability of a function $f:\gamma^*\rightarrow \mathbb{C}$ is meant as the existence of a function $F:G \cup J \rightarrow 
\mathbb{C}$ which is hololomorphic on the Jordan domain $G$, continuous on $G \cup J$ and such that on the arc $ J $ of $ \gamma^* $ we have $ F|_J=f|_J $. 
Such notions of one-sided extendability have been considered in \cite{[6]} and the references therein, 
but in the present article and \cite{[2]} it is, as far as we know, the first time where the phenomenon is proven to be rare.\\
At the end of section $6$ we prove similar results on one-sided extendability on the space $A^p(\Omega)$, where $\Omega$ is a finitely connected domain in 
$\mathbb{C}$ bounded by a finite set of disjoint Jordan curves $\gamma_1, \gamma_2, \cdots , \gamma_n$. Now the extension $F$ of a function $f\in 
A^p(\Omega)$ has to coincide with $f$ only on a non-trivial arc of the boundary of $\Omega$, not on an open subset of $\Omega$. Certainly if the continuous 
analytic capacity of $\partial\Omega$ is zero, the latter automatically happens, but not in general.\\
In section $7$ we consider a domain $\Omega$  in $ \mathbb{C} $, a compact subset $L$ of $\Omega$ and we study the phenomenon of extendability of a 
function $f\in A^p(\Omega\setminus L)$ to a function $ F $ in $ A^p(\Omega) $. There is a dichotomy. Either for every $f$ this is possible or generically 
for all $f \in A^p(\Omega\setminus L)$ this fails. In order to characterize when each horn of the above dichotomy holds we are led to define the 
$p$-continuous analytic capacity $ a_p(L) $ ($p\in \{0,1,2,3, \cdots\}\cup \{\infty\}$), where $a_0(L)$ is the known continuous analytic capacity $ 
a_0(L)=a(L) $ (\cite{[7]}).\\
The study of the above capacities and variants of it is done in section $3$. For $p=1$ the $p$-continuous analytic capacity $ a_1 $ is distinct from the continuous analytic capacity $a_0=a$. In particular
if $K_{1/3}$ is the usual Cantor set lying on $[0,1]$ and $L=K_{1/3}\times K_{1/3}$, then $a_0(L)>0 $, but $a_1(L)=0$. This means that for any open set $U$ containing $L$ there exists a function in $A(U\setminus L )$ which is not holomorhic on $U$, but if the derivative of a function in $A(U \setminus L)$ extends continuously on $L$, then the function is holomorphic on $U$. Generic versions of this fact imply that $ A^1(U\setminus L) $ is of first category in $ A^0(U\setminus L)=A(U\setminus L) $.  If we replace the spaces $A^p$ with the ${\tilde{A}}^p$ spaces of Whitney type, then the extension on $L$ is equivalent to the fact that the interior of $L$ is void. Thus we can define the continuous analytic capacities ${\tilde{a}}^p(L)$ which  vanish if and only if the interior of $L$ is empty. We prove what is needed in section $7$. More detailed study of those capacities will be done in future papers; for instance we can investigate the semiadditivity of $a_p$, whether the vanishing of $a_p$ on a compact set $L$ is a local phenomenon and whether replacing the continuous analytic capacity $a$ by the Ahlfors analytic capacity $\gamma$ we can define capacities $\gamma_p$ satisfying the analogous properties. Certainly the spaces $ A^p(\Omega) $ will be replaced by $ H^{\infty}_p(\Omega) $, the space of holomorphic functions on $\Omega$ such that for every $l\in \mathbb{N}$, $l\leq p$ the derivative $f^{(l)}$ of order $l$ is bounded on $\Omega$. We will also examine if a dichotomy result as in section $7$ holds for the spaces $H^{\infty}_p(\Omega)$ in the place of $A^p(\Omega)$. All these in future papers.\\
In section $2$ some preliminary geometry of locally injective curves is presented; for instance a curve is real analytic if and only if real analyticity of a function on the curve is equivalent to holomorphic extendability of the function on discs centred on points of the curve.
We also show that if the map $\gamma$ defining the curve is a homeomorphism with non-vanishing derivative, then the spaces $C^k(\gamma), k\in \{0,1,2,3, 
\cdots\}\cup \{\infty\}$ are independent of the particular parametrization $ \gamma $ and depend only on the image $ \gamma^* $ of $\gamma$. Thus in some 
cases it makes sense to write $C^k(\partial\Omega)$ and prove generic results in these spaces.\\
Finally, we mention that some of the results of section $5$ are valid for analytic curves $ \gamma $; that is, they hold when we use a conformal 
parametrization of $ \gamma $. Naturally comes the question whether these results remain true if we change the parametrization of the curve; in particular 
what happens if we consider the parametrization with respect to the arc length $s$? Answering this question was the motivation of \cite{[9]}, \cite{[10]} where 
it is proven that arc length is a global conformal parameter for any analytic curve. Thus the results of section $5$ remain true if we use the arc length 
parametrization.
Finally, we mention that in the present paper we start with qualitative categorical results, which lead us to quantitative notions as the $p$-continuous analytic capacity $a_p$ and the $p$-analytic capacity $\gamma_p$.

\section{Preliminaries}

In most of our results it is important what is the degree of smoothness of a curve and the relation of real analyticity of functions on a curve with the 
holomorphic extendability of them around the curve. That is why we present here some basic results concerning locally injective curves in $ \mathbb{C} $. 

Unless otherwise specified $I$ is an interval and $X$ is an interval or the unit circle.

\begin{defi}
Let $ \gamma:X\rightarrow \mathbb{C} $ be a continuous and locally injective function and $ l\in 
\{1,2,3, \cdots\}\cup \{\infty\} $. The curve $ \gamma $ belongs to the class $ C^{l}(X) $,  if for $ k\in \{1,2,3, \cdots\}, k\leq l $, the derivative $ 
\gamma^{(k)} $ exists and is a continuous function. 
\end{defi}

\begin{defi}
Let $ \gamma:X\rightarrow \mathbb{C} $ be a continuous and locally injective function and $ k\in 
\{1,2,3, \cdots\}\cup \{\infty\} $. A function $ f:\gamma^*\rightarrow \mathbb{C} $ defined on the image $ \gamma^*=\gamma(X) $ belongs to the class $ 
C^{k}(\gamma) $ if for every $l\in \{1,2,3,\cdots,\}$, $l\leq k$ the derivative $ (f\circ \gamma)^{(l)} $ exists and is a continuous function. Let $ (X_n), n\in 
\{1,2,3, \cdots\} $ be an increasing sequence of compact intervals such that $ \bigcup\limits_{n=1}^{\infty}X_n=X $. The topology of the space $ 
C^{k}(\gamma) $ is defined by the seminorms 
$$\sup\limits_{t\in X_n}{|(f\circ\gamma)^{(l)}(t)|},l=0,1,2,\dots,k, n=1,2,3, \dots $$ 
In this way $C^k(\gamma)$ becomes a Banach space if $k<+\infty$ and $X$ is compact. Otherwise is a Fr{\'e}chet space. In every case Baire's theorem is at our disposal.
\end{defi}

\begin{defi}
Let $ \gamma:I\rightarrow \mathbb{C} $ be a continuous and locally injective function. We will say that the curve $ \gamma$ is 
analytic at $ t_0 \in I$ if there exist an open set $ t_0\in V\subseteq \mathbb{C} $, a real number $ \delta>0 $ with $ (t_0-\delta,t_0+\delta)\cap I\subset 
V $ and a holomorphic and injective function $ F:V\rightarrow \mathbb{C} $ such that $ F|_{(t_0-\delta,t_0+\delta)}=\gamma|_{(t_0-\delta,t_0+\delta)} $. If 
$ \gamma$ is analytic at every $ t\in I $ we will say that $ \gamma$ is an analytic curve. 
\end{defi}

\begin{lemma}
Let $ t_0 \in I $ and $ \gamma:I\rightarrow \mathbb{C} $ be a continuous and locally injective function.\\
We suppose that for every function $ f: I \rightarrow \mathbb {C} $ items \eqref{real} and \eqref{complex} are equivalent:
\begin{enumerate}
\item \label{real} There exists a power series of real variable
 $$\sum\limits_{n=0}^\infty a_n(t-t_0)^n,a_n \in \mathbb{C}$$  
 with a positive radius of convergence $r>0$ and there exists $0<\delta\leq r$ such that $$f(t)=
 \sum\limits_{n=0}^\infty a_n(t-t_0)^n$$ for $t\in(t_0-\delta,t_0+
 \delta)\cap I$.\\
\item \label{complex} There exists a power series of complex variable 
 $$\sum\limits_{n=0}^\infty b_n(z- \gamma(t_0))^n,b_n \in \mathbb{C}$$ with a positive radius of convergence $s>0$ and $0<\epsilon\leq s$ such that 
 $$f(t)=
 \sum\limits_{n=0}^\infty b_n(\gamma(t)-\gamma(t_0))^n$$ for $t\in(t_0-   
 \epsilon,t_0+
 \epsilon)\cap I$.
\end{enumerate}

Then $ \gamma $ is analytic at $ t_0 $.
\end{lemma}

\begin{proof}
Implication \eqref{complex} $\Rightarrow $ \eqref{real} will only be used to prove that $ \gamma $ is differentiable on an open interval that contains $ t_0 $. We start by considering $ 
\beta >0 $ and $J=(t_0-\beta, t_0+\beta)\cap I $. For every $ t \in J $ we choose $ f(t)=\gamma(t)= \gamma(t_0)+(\gamma(t)-\gamma(t_0)) $ and so by \eqref{complex} $\Rightarrow $ \eqref{real} we obtain that there exists $0< \delta < \beta$ such that $$\gamma(t)=
 \sum\limits_{n=0}^\infty a_n(t-t_0)^n$$ for some constants $a_n \in \mathbb{C}$ and for every $t\in(t_0-\delta,t_0+
 \delta)\cap I$. Therefore $ \gamma $ is differentiable on this interval. Now for $ g(t)=t=t_0+(t-t_0) $ by \eqref{real} $\Rightarrow $ \eqref{complex} we have that there is  
$0<\epsilon\leq \delta$ such that $$t= \sum\limits_{n=0}^\infty b_n(\gamma(t)-\gamma(t_0))^n$$ for some constants $b_n \in \mathbb{C}$ for every $t\in(t_0- \epsilon,t_0+ \epsilon)$. Differentiation of the above equation at $t=t_0 $ yields the relation $ 1=b_{1}\gamma'(t_0) $ which implies that $b_{1}\neq 0$. The power series  $\sum\limits_{n=0}^\infty 
b_n(z-\gamma(t_0))^n$ has a positive radius of convergence and so there exists $\alpha>0$ such that $ \gamma(t)\in D(\gamma(t_0),\alpha)$ for every $ t \in 
(t_0- \epsilon,t_0+ \epsilon)\cap I $ and the function $ f:D(\gamma(t_0),\alpha)\rightarrow \mathbb{C}$ with $$ f(z)=\sum\limits_{n=0}^\infty 
b_n(z-\gamma(t_0))^n $$ is holomorphic. Also, $ f(\gamma(t))=t $ for every $ t \in (t_0- \epsilon,t_0+ \epsilon)\cap I $. Since $ 
f'(\gamma(t_0))=b_1\neq 0 $ , $ f $ is locally invertible and thus the inverse, $h$, of $f$ is well defined on the open disk $ D(t_0,\eta) $ where $ 0<\eta <\epsilon $. Moreover, $ 
\gamma(t)=h(t) $ for every $ t \in (t_0-\eta, t_0+\eta)\cap I $ and $h$ is holomorphic and injective. 
\end{proof}

\begin{remark}
The above proof shows that if $\gamma $ in Lemma $2.4$ belongs to $ C^1(I) $, then the conclusion of the lemma is true even if we only assume that \eqref{real} $\Rightarrow $ \eqref{complex} is true. 
\end{remark}
 
The following lemma is the inverse of Lemma $2.4$.
 
\begin{lemma}
Let $ t_0 \in I $ and $ \gamma:I\rightarrow \mathbb{C} $ be a continuous and locally injective function, which is analytic at $ t_0 $, and $ f:I\rightarrow \mathbb{C} $. Then the following are equivalent: 
\begin{enumerate}
\item \label{real variable} There exists a power series of real variable
 $$\sum\limits_{n=0}^\infty a_n(t-t_0)^n,a_n \in \mathbb{C}$$ with positive radius of convergence $r>0$ and there exists a $0<\delta\leq r$ such that 
$$f(t)=
 \sum\limits_{n=0}^\infty a_n(t-t_0)^n$$ for $t\in(t_0-\delta,t_0+
 \delta)\cap I$.
 
\item \label{complex variable} There exists a power series with complex variable
 $$\sum\limits_{n=0}^\infty b_n(z-\gamma(t_0))^n,b_n \in \mathbb{C}$$ with positive radius of convergence $s>0$ and there exists $\epsilon>0$ such that 
 $$f(t)=
 \sum\limits_{n=0}^\infty b_n(\gamma(t)-\gamma(t_0))^n$$ for $t\in(t_0-   
 \epsilon,t_0+
 \epsilon)\cap I$.
\end{enumerate}
\end{lemma}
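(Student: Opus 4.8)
The plan is to use the analyticity of $\gamma$ at $t_0$ to convert the equivalence into a statement about composing holomorphic functions. First I would invoke Definition 2.3 to obtain an open set $V$ with $t_0 \in V$, a number $\delta > 0$ with $(t_0 - \delta, t_0 + \delta) \cap I \subseteq V$, and a holomorphic injective map $F : V \to \mathbb{C}$ restricting to $\gamma$ on the real interval. Since an injective holomorphic function has non-vanishing derivative, $F'(t_0) \neq 0$, so the holomorphic inverse function theorem provides a holomorphic local inverse $H = F^{-1}$ on a disc $D(\gamma(t_0), \rho)$ with $H(\gamma(t_0)) = t_0$. These two maps $F$ and $H$ are the whole engine of the proof.

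For the direction $2) \Rightarrow 1)$, I would start from the hypothesis that $g(z) = \sum_{n=0}^\infty b_n (z - \gamma(t_0))^n$ has positive radius of convergence $s$ and that $f(t) = g(\gamma(t))$ near $t_0$. Because $F$ is continuous and $F(t_0) = \gamma(t_0)$, after shrinking $\delta$ the image $F((t_0-\delta, t_0+\delta))$ lies in the disc of convergence of $g$, so $g \circ F$ is holomorphic on a complex neighbourhood of $t_0$. Restricting $g \circ F$ to the real axis yields a convergent power series in $(t - t_0)$ that agrees with $g(\gamma(t)) = f(t)$, which is precisely condition 1).

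For $1) \Rightarrow 2)$ I would run the same idea through $H$. The series in 1) defines a holomorphic extension $\phi(z) = \sum_{n=0}^\infty a_n (z - t_0)^n$ of $f$ on a disc $D(t_0, r)$; composing with the inverse, $g = \phi \circ H$ is holomorphic near $\gamma(t_0)$ once $\rho$ is shrunk so that $H$ maps $D(\gamma(t_0),\rho)$ into $D(t_0, r)$. Expanding $g$ about $\gamma(t_0)$ and using $H(\gamma(t)) = t$ for real $t$ near $t_0$ gives $g(\gamma(t)) = \phi(t) = f(t)$, i.e.\ $f(t) = \sum_{n=0}^\infty b_n (\gamma(t) - \gamma(t_0))^n$, which is condition 2).

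I expect the only delicate point to be the bookkeeping of the nested neighbourhoods and radii: in each direction the domain of the outer series must be shrunk so that the relevant composition is defined, and one must then check that $\gamma(t)$ (respectively $t$) remains inside it for $t$ ranging over a genuine subinterval of $I$ containing $t_0$. Since $F$ and $H$ are holomorphic with non-vanishing derivative at the base point, every such shrinking is possible and keeps all radii of convergence strictly positive, so this is routine rather than a real obstacle; the substance of the lemma is carried entirely by the holomorphic extension $F$ furnished by the analyticity of $\gamma$.
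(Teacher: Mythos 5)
Your proposal is correct and follows essentially the same route as the paper: both proofs use the holomorphic injective extension $\Gamma$ of $\gamma$ furnished by analyticity at $t_0$, transport the given power series by composing with $\Gamma$ (for $2)\Rightarrow 1)$) or with its holomorphic inverse (for $1)\Rightarrow 2)$), and then Taylor-expand the resulting holomorphic composition about $t_0$ or $\gamma(t_0)$ respectively. The only cosmetic difference is that you justify the holomorphy of the inverse via the non-vanishing of $F'(t_0)$ and the local inverse function theorem, whereas the paper simply takes $\Gamma^{-1}$ on the open image $\Gamma(D(t_0,\varepsilon))$; these are interchangeable.
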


\begin{proof} 
Let us start with the following observation. Since $\gamma$ is an analytic curve at $ t_0 $ there is an open disk $ D(t_0,\varepsilon)\subseteq\mathbb{C} $, where $ \varepsilon>0 $ and a 
holomorphic and injective function $ \Gamma:D(t_0,\varepsilon)\rightarrow \mathbb{C} $ with $\Gamma(t)=\gamma(t)$ for $t \in 
(t_0-\varepsilon,t_0+\varepsilon)\cap I$.\\

\eqref{real variable} $\Rightarrow $ \eqref{complex variable}
We define the function 
$$g(z)=\sum\limits_{n=0}^\infty a_n(z-t_0)^n,$$
$z\in D(t_0,\delta)$,
which is well defined and holomorphic on $D(t_0,\delta)$. As
$$\Gamma^{-1}: \Gamma(D(t_0,\varepsilon))\rightarrow \mathbb{C}$$ is a holomorphic function we are led to consider the functions $$F=g\circ \Gamma^{-1}:
\Gamma(D(t_0,\varepsilon))\rightarrow \mathbb{C},$$ (where $\Gamma(D(t_0,\varepsilon))$ is an open set) and 
$$(F\circ\Gamma)(t)=f(t),t\in D(t_0,\varepsilon)
\cap I.$$ It follows that there are $b_n\in \mathbb{C}$, $n\in\mathbb{N}$ and $ \delta>0 $ such that $$ F(z)=\sum\limits_{n=0}^\infty b_n(z-\gamma(t_0))^n $$
for every  $z \in D(\gamma(t_0),\delta)\subseteq\Gamma(D(t_0,\varepsilon))$. Thus $$ f(t)=(F
\circ\gamma)(t)=\sum\limits_{n=0}^\infty b_n(\gamma(t)-\gamma(t_0))^n $$ in an interval $ (t_0- s,t_0+s)\cap I$ where $ s>0 $.
\\

\eqref{complex variable} $\Rightarrow $ \eqref{real variable}
We consider the function $$G(z)=\sum\limits_{n=0}^\infty b_n(z-\gamma(t_0))^n,$$ 
$z\in D(\gamma(t_0),s)$. Choose $ a>0 $ with $ a<\varepsilon $ such that $ \Gamma(D(t_0,a))\subseteq D(\gamma(t_0),s)$.
The function $$G\circ\Gamma :D(t_0,a)\rightarrow \mathbb{C}$$ is holomorphic. 
Therefore there are
$a_n\in \mathbb{C}$, $n\in\mathbb{N}$ such that
$$(G\circ\Gamma)(z)=\sum\limits_{n=0}^\infty a_n(z-t_0)^n,
z\in D(t_0,a)$$ and consequently  
$$f(t)=G(\gamma(t))=\sum\limits_{n=0}^\infty a_n(t-t_0)^n,t\in(t_0-
a,t_0+a).$$
\end{proof}

\begin{defi}
Let $\gamma : I \rightarrow \mathbb{C}$ be a locally injective curve and 
$z_0=\gamma(t_0),t_0 \in I$. A function $f: \gamma^* \rightarrow \mathbb{C}$ belongs to the class of non-holomorphically extendable at $(t_0,z_0=\gamma(t_0))$ functions if  
there are no open disk 
$D(z_0,r), r>0$ and $\eta >0$ and a holomorphic function $F:D(z_0,r)\rightarrow 
\mathbb{C}$, such that $\gamma((t_0-\eta,t_0+\eta)\cap I) \subset
D(z_0,r)$ and $ F(\gamma(t))=f(\gamma(t)) $ for all $t\in(t_0-\eta,t_0+\eta)\cap I$. 
Otherwise we say that $f$ is holomorphically extendable at $(t_0,z_0=\gamma(t_0))$.
\end{defi}

\begin{defi}
Let $\gamma : I \rightarrow \mathbb{C}$ be a continuous map and $t_0 \in I$. A function $ f:\gamma^{*}\rightarrow \mathbb{C} $ is real 
analytic at $(t_0,z_0=\gamma(t_0))$ if there exist $ \delta>0 $ and a power series $ \sum\limits_{n=0}^\infty a_n(t-t_0)^n $ with a radius of convergence  $ \epsilon>\delta>0 $, such that $ f(\gamma(t))=\sum\limits_{n=0}^\infty a_n(t-t_0)^n $ for every $ t \in(t_0-\delta,t_0+\delta)\cap I $.
\end{defi}

The following proposition associates the phenomenon of real-analyticity and that of holomorphically extendability.

\begin{prop}
Let $\gamma : I \rightarrow \mathbb{C}$ be an analytic curve at $ t_0 $ and $t_0 \in I$. A function $ f:\gamma^{*}\rightarrow \mathbb{C} $ is real 
analytic at $(t_0,z_0=\gamma(t_0))$ if and 
only if $ f $ is holomorphically extendable at $(t_0,z_0=\gamma(t_0))$.
\end{prop}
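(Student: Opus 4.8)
The plan is to reduce the whole statement to Lemma 2.6 applied to the composite function $g=f\circ\gamma:I\to\mathbb{C}$, after observing that the two phenomena in the proposition are, once the definitions are unwound, precisely the two conditions of that lemma for $g$. Indeed, by Definition 2.8 the function $f$ is real analytic at $(t_0,z_0)$ exactly when $g(t)=f(\gamma(t))=\sum_{n=0}^\infty a_n(t-t_0)^n$ holds on some $(t_0-\delta,t_0+\delta)\cap I$ for a power series of positive radius of convergence, which is condition (1) of Lemma 2.6 for $g$. On the other hand, I will show that holomorphic extendability in the sense of Definition 2.7 is equivalent to condition (2) for $g$. Since $\gamma$ is assumed analytic at $t_0$, Lemma 2.6 is available and yields the equivalence of (1) and (2), completing the argument.

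For the direction real analytic $\Rightarrow$ holomorphically extendable, I would begin from condition (1) for $g$ and apply the implication $(1)\Rightarrow(2)$ of Lemma 2.6 to obtain a complex power series $\sum_{n=0}^\infty b_n(z-\gamma(t_0))^n$ of positive radius of convergence $s$ with $f(\gamma(t))=\sum_{n=0}^\infty b_n(\gamma(t)-\gamma(t_0))^n$ for $t$ in a neighborhood of $t_0$ in $I$. I then set $F(z)=\sum_{n=0}^\infty b_n(z-z_0)^n$ on the disk $D(z_0,s)$, which is holomorphic there, and use the continuity of $\gamma$ at $t_0$ together with $\gamma(t_0)=z_0$ to choose $\eta>0$ with $\gamma((t_0-\eta,t_0+\eta)\cap I)\subset D(z_0,s)$. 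On that interval $F(\gamma(t))=f(\gamma(t))$, so $F$ is exactly the holomorphic extension required by Definition 2.7.

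For the converse I would start from a disk $D(z_0,r)$, a number $\eta>0$, and a holomorphic $F:D(z_0,r)\to\mathbb{C}$ with $F(\gamma(t))=f(\gamma(t))$ on $(t_0-\eta,t_0+\eta)\cap I$. Expanding $F$ in its Taylor series centred at $z_0=\gamma(t_0)$ gives $F(z)=\sum_{n=0}^\infty b_n(z-\gamma(t_0))^n$ with radius of convergence at least $r>0$, so that $f(\gamma(t))=\sum_{n=0}^\infty b_n(\gamma(t)-\gamma(t_0))^n$ near $t_0$, which is condition (2) for $g$. The implication $(2)\Rightarrow(1)$ of Lemma 2.6 then produces the real-variable power series exhibiting real analyticity of $f$ at $(t_0,z_0)$; if necessary I shrink the resulting $\delta$ slightly so that the radius of convergence strictly exceeds it, as Definition 2.8 formally requires.

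All the substantive work is carried by Lemma 2.6, so I do not expect a genuine obstacle; the only points demanding care are bookkeeping ones. These are: the passage between a function $f:\gamma^*\to\mathbb{C}$ and the parametrized function $g=f\circ\gamma$ to which Lemma 2.6 is stated, the matching of neighbourhoods (using continuity of $\gamma$ to move between a disk $D(z_0,r)$ in the image and an interval around $t_0$), and the purely cosmetic adjustment of $\delta$ to meet the strict inequality in Definition 2.8. The analyticity of $\gamma$ at $t_0$ enters only to legitimize the use of Lemma 2.6.
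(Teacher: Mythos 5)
Your proposal is correct and follows essentially the same route as the paper: both directions hinge on Lemma 2.6 applied to $g=f\circ\gamma$, with the forward direction realizing the extension $F$ as the sum of the complex power series on a disk (using continuity of $\gamma$ to map a small interval into it), and the converse using the Taylor expansion of the given holomorphic extension to produce condition (2). The bookkeeping points you flag (shrinking $\delta$, matching neighbourhoods) are handled the same way, implicitly, in the paper's own proof.
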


\begin{proof}
At first we will prove direction $ \Rightarrow: $. If $ f $ is real analytic at $(t_0,z_0=\gamma(t_0))$, then from Lemma $2.6$
$$f(\gamma(t))=\sum\limits_{n=0}^\infty b_n(\gamma(t)-\gamma(t_0))^n$$ for every $t\in(t_0-\epsilon,t_0+ \epsilon)\cap I$ and for some $ b_n \in \mathbb {C}, \epsilon>0 $. From the continuity of $\gamma$, there exists $\eta>0$ such that $\gamma((t_0-\eta,t_0+\eta)\cap I)\subset D(z_0,\epsilon)$. Therefore the function $$F(z)=\sum\limits_{n=0}^\infty b_n(z-\gamma(t_0))^n$$ defined on $D(z_0,\epsilon)$ is equal to $f$ on $\gamma((t_0-\eta,t_0+\eta)\cap I)$. Thus the function $f$ is holomorphically extendable at $(t_0,z_0=\gamma(t_0))$.\\
Next we prove direction $ \Leftarrow: $ If $ f $ is extendable at $(t_0,z_0=\gamma(t_0))$, then there are $r>0$ and a holomorphic function $F :D(\gamma(t_0),r) \rightarrow \mathbb{C}$ such that $$f(\gamma(t))=F(\gamma(t))$$ for every $t\in(t_0-\epsilon,t_0+\epsilon)\cap I$ and for 
some $\epsilon>0 $. Let $$\sum\limits_{n=0}^\infty b_n(z-\gamma(t_0))^n$$ be the Taylor expansion
of the holomorphic function $F$. It follows that $$f(\gamma(t))=F(\gamma(t))=\sum\limits_{n=0}^\infty b_n(\gamma(t)-\gamma(t_0))^n$$ for every $t\in(t_0-\epsilon,t_0+\epsilon)\cap I$ 
and as a result, again from Lemma $2.6$, we conclude that $ f $ is real analytic at 
$(t_0,z_0=\gamma(t_0))$, because the curve $\gamma$ is analytic at $ t_0 $.
\end{proof}

The following theorem is a consequence of Lemma $2.4$ and Proposition $2.9$.
\begin{theorem}
Let $\gamma : I \rightarrow \mathbb{C}$ be a continuous and locally injective curve and $t_0 \in I$. Then $ \gamma $ is analytic at $ t_0 $ if and only if for every function $ f:\gamma^*\rightarrow \mathbb{C} $ the following are 
equivalent:
\begin{enumerate}
\item $ f $ is real analytic at $ (t_0,z_0=\gamma(t_0)) $.
\item $ f $ is holomorphically extendable at $ (t_0,z_0=\gamma(t_0)) $.
\end{enumerate}
\end{theorem}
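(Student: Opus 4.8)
The plan is to prove the two implications separately, drawing the forward direction directly from Proposition 2.9 and the converse from the argument of Lemma 2.4, transcribed to functions living on $\gamma^*$. For the direction $\Rightarrow$ (if $\gamma$ is analytic at $t_0$, then for every $f$ the two notions coincide) there is nothing to do beyond citing Proposition 2.9: that proposition asserts precisely that, when $\gamma$ is analytic at $t_0$, a function $f:\gamma^*\to\mathbb{C}$ is real analytic at $(t_0,z_0)$ if and only if it is holomorphically extendable there.

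For the converse I assume the equivalence holds for every $f:\gamma^*\to\mathbb{C}$ and I want to conclude that $\gamma$ is analytic at $t_0$. The idea is to re-run the proof of Lemma 2.4, but feeding it functions on $\gamma^*$ rather than on $I$. First I would apply the hypothesis to $h=\mathrm{id}|_{\gamma^*}$, $h(z)=z$. Extending by the entire function $F(z)=z$ (and choosing $\eta>0$ with $\gamma((t_0-\eta,t_0+\eta)\cap I)\subset D(z_0,1)$ by continuity) shows that $h$ is holomorphically extendable at $(t_0,z_0)$; hence by hypothesis $h$ is real analytic there, i.e. $\gamma(t)=h(\gamma(t))=\sum_{n=0}^\infty a_n(t-t_0)^n$ on a neighborhood of $t_0$. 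In particular $\gamma$ is differentiable at $t_0$. This replaces the use of $2)\Rightarrow 1)$ with the choice $f=\gamma$ in Lemma 2.4.

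Next I would produce the analogue of the function $g(t)=t$ used in Lemma 2.4. Since $\gamma$ is locally injective, there is $\beta>0$ with $\gamma$ injective on $(t_0-\beta,t_0+\beta)\cap I$; on the image of this interval I define $f$ by $f(\gamma(t))=t$, which is well defined by injectivity, and I extend $f$ arbitrarily to the rest of $\gamma^*$ (the conditions at $(t_0,z_0)$ depend only on $f\circ\gamma$ near $t_0$, so this extension is immaterial). This $f$ is real analytic at $(t_0,z_0)$, since $f(\gamma(t))=t=t_0+(t-t_0)$ is a convergent power series, so by hypothesis it is holomorphically extendable: there are $r>0$ and a holomorphic $F:D(z_0,r)\to\mathbb{C}$ with $F(\gamma(t))=t$ for $t$ near $t_0$. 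Differentiating this identity at $t_0$ is legitimate because $\gamma$ is differentiable (previous step) and $F$ is holomorphic, and it gives $F'(z_0)\gamma'(t_0)=1$, whence $F'(z_0)\neq 0$. By the holomorphic inverse function theorem $F$ is biholomorphic near $z_0$; its inverse $\Phi=F^{-1}$ is holomorphic and injective on a disk $D(t_0,\eta)$ and satisfies $\Phi(t)=\gamma(t)$ for $t\in(t_0-\eta,t_0+\eta)\cap I$. This is exactly Definition 2.3 of analyticity of $\gamma$ at $t_0$, taking $V=D(t_0,\eta)$ and the injective holomorphic map to be $\Phi$.

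The only delicate point, and the reason the hypothesis is phrased for functions on $\gamma^*$ rather than on $I$, is the construction in the previous paragraph: to realize the map $t\mapsto t$ as a composition $f\circ\gamma$ I must recover $t$ from $\gamma(t)$, which is precisely local injectivity near $t_0$. I expect the main thing to get right to be this bookkeeping, namely the well-definedness of $f$ on $\gamma((t_0-\beta,t_0+\beta)\cap I)$ and the irrelevance of its values elsewhere; everything else is a direct transcription of the computation already carried out in Lemma 2.4, so the theorem indeed follows from Lemma 2.4 together with Proposition 2.9.
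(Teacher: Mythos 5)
Your proof is correct and follows the paper's own route: the paper dispatches this theorem by citing Proposition 2.9 for the forward direction and Lemma 2.4 for the converse, which is exactly your decomposition. Your careful transfer between functions on $I$ and functions on $\gamma^*$ via local injectivity (testing the hypothesis on $h(z)=z$ and on the locally defined $f(\gamma(t))=t$) is precisely the bookkeeping the paper leaves implicit, and those two test functions are the same ones used inside the proof of Lemma 2.4.
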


Now we will examine a different kind of differentiability.

\begin{defi}
Let $\gamma : X \rightarrow \mathbb{C}$ be a continuous and injective curve. We define the derivative of a function $ f:\gamma^*\rightarrow \mathbb{C} $ at $ \gamma(t_0) $, where $ t_0 \in X $ by 
$$ \dfrac{{df}}{dz}(\gamma(t_0))=\lim_{t\rightarrow t_0}\dfrac{f(\gamma(t))-f(\gamma(t_0))}{\gamma(t)-\gamma(t_0)} $$
if the above limit exists and is a complex number. Inductively, we define $$\dfrac{{d^kf}}{dz^k}(\gamma(t))=\dfrac{{d\left(\dfrac{{d^{k-1}}f}{dz^{k-1}}(\gamma(t))\right)}}{dz}(\gamma(t)).$$
\end{defi}

\begin{remark}
If $\gamma : X \rightarrow \mathbb{C}$ is a homeomorphism onto $\gamma^*$, then we can equivalently define  $ \dfrac{{df}}{dz}(\gamma(t_0)) $ as
$$ \dfrac{{df}}{dz}(\gamma(t_0))=\lim_{z\in \gamma^*, z\rightarrow \gamma(t_0)}\dfrac{f(z)-f(\gamma(t_0))}{z-\gamma(t_0)} $$
if the above limit exists and is a complex number.
\end{remark}

\begin{defi}
Let $\gamma : X \rightarrow \mathbb{C}$ is a homeomorphism onto $\gamma^*$. We define the set $ C^k(\gamma^*) $ inductively. A function $ f:\gamma^*\rightarrow \mathbb{C} $ belongs to the class $ C^1(\gamma^*) $ if $\dfrac{{df}}{dz}(\gamma(t))$ exists and is continuous for $ t\in I $.
For $ k\in \{2,3, \cdots\} $ a function $ f:\gamma^*\rightarrow \mathbb{C} $ belongs to the class if 
$$\dfrac{{d^kf}}{dz^k}(\gamma(t))=\dfrac{{d\left(\dfrac{{d^{k-1}}f}{dz^{k-1}}(\gamma(t))\right)}}{dz}(\gamma(t))$$ exists and is continuous for $ t\in X $. 
Finally, a function $ f:\gamma^*\rightarrow \mathbb{C} $ belongs to the class $ C^{\infty}(\gamma^*) $ if $\dfrac{{d^kf}}{dz^k}(\gamma(t))$ exists and is 
continuous for every $ t\in X $ and for every $ k\in \{1,2,3, \cdots\} $.
Let also $ (X_n), n\in \{1,2,3, \cdots\} $ be an increasing sequence of compact subsets of $ \gamma^* $ such that $ 
\bigcup\limits_{n=1}^{\infty}X_n=\gamma^* $. For $k\in \{1,2,3, \cdots\}\cup \{\infty\}$ the topology of the space $ C^{k}(\gamma^*) $ is defined by the 
seminorms $$\sup\limits_{z\in X_n}{|\dfrac{{d^{l}f}}{dz^l}(z)|},l=0,1,2,\dots,k, n=1,2,3, \dots $$ 
In this way $C^k(\gamma^*)$ becomes a Banach space if $ k<\infty $ and $ I $ is compact. Otherwise $C^k(\gamma^*)$ is a Fr{\'e}chet space.
\end{defi}

\begin{prop}
Let $\gamma : X \rightarrow \mathbb{C}$ is a homeomorphism onto $\gamma^*$ with $ \gamma'(t)\neq 0 $ for every $ t \in X $ and 
$ k\in \{1,2,3, \cdots\}\cup \{\infty\} $. If $ C^{k}(X)=C^{k}(\gamma^*)\circ \gamma $, then $ \gamma \in C^k(X) $.
\end{prop}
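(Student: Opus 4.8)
The plan is to feed a single canonical test function into the hypothesis, namely the inclusion map $\iota:\gamma^*\to\mathbb{C}$, $\iota(z)=z$. The entire statement will follow from the trivial identity $\iota\circ\gamma=\gamma$ together with the fact that $\iota$ already lies in $C^k(\gamma^*)$. Indeed, once I establish $\iota\in C^k(\gamma^*)$, I obtain $\gamma=\iota\circ\gamma\in C^k(\gamma^*)\circ\gamma$, and the assumed equality $C^k(\gamma^*)\circ\gamma=C^k(I)$ then forces $\gamma\in C^k(I)$, which is exactly the desired conclusion. Note that this uses only the inclusion $C^k(\gamma^*)\circ\gamma\subseteq C^k(I)$, i.e. only one of the two containments packaged in the hypothesis.

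The main (and essentially only) thing to verify is therefore that $\iota\in C^k(\gamma^*)$ in the sense of Definition 2.13, which I would do by computing the intrinsic derivatives $\frac{d^m\iota}{dz^m}$ directly from Definition 2.11. Since $\gamma$ is a homeomorphism it is in particular injective, so $\gamma(t)\neq\gamma(t_0)$ whenever $t\neq t_0$ and the difference quotient defining $\frac{d\iota}{dz}$ is well defined; one computes $\frac{d\iota}{dz}(\gamma(t_0))=\lim_{t\to t_0}\frac{\gamma(t)-\gamma(t_0)}{\gamma(t)-\gamma(t_0)}=1$ for every $t_0\in I$. Hence $\frac{d\iota}{dz}$ is the constant function $1$ on $\gamma^*$, which is continuous in $t$, so $\iota\in C^1(\gamma^*)$.

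For the higher orders I would argue by induction. Invoking injectivity again, the intrinsic derivative of the constant function $1$ is $\lim_{t\to t_0}\frac{1-1}{\gamma(t)-\gamma(t_0)}=0$, so $\frac{d^2\iota}{dz^2}\equiv 0$, and inductively $\frac{d^m\iota}{dz^m}\equiv 0$ for all $m\geq 2$. Each of these derivatives exists and is continuous, so $\iota\in C^k(\gamma^*)$ for every $k\in\{1,2,3,\dots\}\cup\{\infty\}$, including the case $k=\infty$. This completes the verification and hence the proof.

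I do not expect a genuine obstacle: the whole content lies in recognizing that the inclusion map is intrinsically smooth on $\gamma^*$ regardless of how rough the parametrization $\gamma$ is, and that pulling it back by $\gamma$ returns $\gamma$ itself. The only points deserving a word of care are the appeal to injectivity to ensure the denominators $\gamma(t)-\gamma(t_0)$ never vanish, and the remark that the argument exploits just one of the two inclusions in the hypothesis. In particular, the standing assumption $\gamma'(t)\neq 0$ is not actually needed for this direction.
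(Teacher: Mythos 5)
Your proof is correct and is essentially the same as the paper's: the paper also feeds the inclusion map (the function $f:\gamma^*\to\mathbb{C}$ with $f(\gamma(t))=\gamma(t)$) into the hypothesis, asserts $f\in C^k(\gamma^*)$, and concludes $\gamma=f\circ\gamma\in C^k(I)$. Your write-up merely makes explicit the computation that the intrinsic derivatives of the inclusion map are $1,0,0,\dots$ (which the paper treats as obvious), and correctly observes that only the inclusion $C^k(\gamma^*)\circ\gamma\subseteq C^k(I)$ and not the condition $\gamma'(t)\neq 0$ is needed.
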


\begin{proof}
The function $ f:\gamma^*\rightarrow \mathbb{C} $ with $ f(\gamma(t))=\gamma(t) $ for $ t\in X $ belongs to the class $ C^k(\gamma^*) $ and therefore the 
function $ \gamma=f\circ \gamma :X\rightarrow \mathbb{C} $ belongs to the class $ C^{k}(X)$.
\end{proof}

Now we will prove the inverse of the previous proposition: If $ \gamma \in C^k(X) $, then $ C^{k}(X)=C^{k}(\gamma^*)\circ \gamma $. In order to do that we need the following lemma which will also be useful to us later.

\begin{lemma}
Let $X$ be an interval $I\subset \mathbb{R}$ or the unit circle $T$, $\gamma\in C^k(X)$, 
$k\in\{1,2,\dots\}\cup\{\infty\}$ and 
$f\in C^k(\gamma^*)$ and $g=f\circ\gamma$. Then $ g\in C^k(X) $ and there exist polynomials
$P_{j,i}$, $i,j\in\{1,2,3,\dots\}$, $j\leq i \leq k$, defined on $\mathbb{C}^i$, such that
$$g^{(i)}(t)=\sum\limits_{j=1}^i \dfrac{{d^j f}}{dz^j}(\gamma(t))P_{j,i}(\gamma^{'}(t),
\gamma^{''}(t),\dots,\gamma^{(i)}(t))$$
where the derivatives of $ \gamma $ are with respect to the real variable $ t $ in the case $ X=I $ and with respect to the complex variable $ t $, $ |t|=1 $ in the case $ X=T $.
\end{lemma}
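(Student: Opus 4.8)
The lemma asserts a Faà di Bruno–type chain rule for the composition $g = f \circ \gamma$, where $f$ is differentiable with respect to the complex variable $z$ along the curve $\gamma^*$ (in the sense of Definition 2.12/2.14), and $\gamma$ is $C^k$ as a curve. The key point is that the "outer" derivatives are the complex-derivatives $\frac{d^j f}{dz^j}$ along $\gamma^*$, while the "inner" derivatives $\gamma', \gamma'', \dots$ are ordinary derivatives with respect to the real parameter $t$ (or the circle variable). So the claim is essentially that the classical Faà di Bruno formula holds in this mixed setting, with universal polynomial coefficients $P_{j,i}$.

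**Proof strategy: induction on $i$.** I'd prove this by induction on $i$, $1 \le i \le k$. For the base case $i=1$: by the definition of $\frac{df}{dz}$ (Definition 2.12) combined with the chain rule, $g'(t) = \frac{df}{dz}(\gamma(t))\,\gamma'(t)$. This requires first checking that $g = f\circ\gamma$ is actually differentiable, which follows because $\frac{df}{dz}(\gamma(t_0)) = \lim_{t\to t_0}\frac{f(\gamma(t))-f(\gamma(t_0))}{\gamma(t)-\gamma(t_0)}$ exists and $\gamma'(t_0)$ exists, so $g'(t_0) = \frac{df}{dz}(\gamma(t_0))\gamma'(t_0)$ by multiplying and dividing by $\gamma(t)-\gamma(t_0)$ (legitimate since $\gamma$ is injective near $t_0$, so the difference quotient is of $0/0$ form only in the limit). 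This gives $P_{1,1}(w_1)=w_1$. For the inductive step, assume the formula holds for $i-1$. I would differentiate
\[
g^{(i-1)}(t)=\sum_{j=1}^{i-1}\frac{d^j f}{dz^j}(\gamma(t))\,P_{j,i-1}(\gamma'(t),\dots,\gamma^{(i-1)}(t))
\]
term by term with respect to $t$. Each term is a product; differentiating $\frac{d^j f}{dz^j}(\gamma(t))$ produces $\frac{d^{j+1}f}{dz^{j+1}}(\gamma(t))\,\gamma'(t)$ by the $i=1$ case applied to the function $\frac{d^j f}{dz^j}$, while differentiating $P_{j,i-1}(\gamma',\dots,\gamma^{(i-1)})$ produces, by the ordinary multivariable chain rule, a polynomial in $\gamma',\dots,\gamma^{(i)}$. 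Collecting the coefficients of each $\frac{d^j f}{dz^j}(\gamma(t))$ yields new polynomials $P_{j,i}$, establishing the formula for $i$.

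**The main obstacle.** The delicate point is justifying that differentiating $\frac{d^j f}{dz^j}(\gamma(t))$ with respect to $t$ gives exactly $\frac{d^{j+1}f}{dz^{j+1}}(\gamma(t))\,\gamma'(t)$. This is precisely the content of the $i=1$ case reapplied to the function $h = \frac{d^j f}{dz^j} \in C^{k-j}(\gamma^*)$, which lies in an appropriate differentiability class by Definition 2.14 (the iterated complex-derivatives along $\gamma^*$ exist and are continuous up to order $k$). The subtlety is that $\frac{d}{dt}\big[h(\gamma(t))\big] = \frac{dh}{dz}(\gamma(t))\,\gamma'(t)$ uses that the complex-derivative $\frac{dh}{dz}$ along the curve, defined as a limit over $t \to t_0$, interacts correctly with the parametrization — but this is exactly Definition 2.12, so no genuinely new analysis is needed. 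I expect the only real care to be bookkeeping: verifying that the emerging coefficient functions are genuine polynomials in $\gamma',\dots,\gamma^{(i)}$ with the stated index range $j \le i$, and that continuity of $g^{(i)}$ follows since each $\frac{d^j f}{dz^j}\circ\gamma$ is continuous (as $f\in C^k(\gamma^*)$) and each $\gamma^{(m)}$ is continuous (as $\gamma\in C^k(X)$).

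**Remark on the index range.** The constraint $j \le i \le k$ drops out automatically from the induction: at each differentiation step the highest-order outer derivative increases by at most one (the $j=i-1$ term spawns a $j=i$ term), and the highest inner derivative appearing is $\gamma^{(i)}$ (from differentiating the argument $\gamma^{(i-1)}$ inside $P_{i-1,i-1}$). Thus no derivative of $f$ beyond order $i$, and no derivative of $\gamma$ beyond order $i$, ever appears, which keeps everything within the available smoothness $k$.
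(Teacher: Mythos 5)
Your proposal is correct and follows essentially the same route as the paper: induction on $i$, with the base case obtained by multiplying and dividing the difference quotient by $\gamma(t)-\gamma(t_0)$ (valid by injectivity of $\gamma$), and the inductive step by differentiating the formula term by term, reapplying the $i=1$ chain rule to $\frac{d^j f}{dz^j}$, and collecting coefficients of each $\frac{d^j f}{dz^j}(\gamma(t))$. The only difference is cosmetic: the paper writes out the explicit recursions $P_{1,i+1}=\sum_{s=1}^{i}\frac{\partial P_{1,i}}{\partial z_s}z_{s+1}$, $P_{j,i+1}=\sum_{s=1}^{i}\frac{\partial P_{j,i}}{\partial z_s}z_{s+1}+P_{j-1,i}z_1$, $P_{i+1,i+1}=P_{i,i}z_1$, which you describe but do not display.
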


\begin{proof}
We will prove the lemma by induction on $i$. For $i=1\leq k$, 
$$g^{'}(t)=(f\circ\gamma)^{'}(t)=\lim_{t\rightarrow t_0}\dfrac{f(\gamma(t))-f(\gamma(t_0))}{t-t_0}$$ $$=
\lim_{t\rightarrow t_0}\dfrac{f(\gamma(t))-f(\gamma(t_0))}{\gamma(t)-\gamma(t_0)}\dfrac{\gamma(t)-\gamma(t_0)}{t-t_0}
=\dfrac{{d f}}{dz}(\gamma(t))\gamma^{'}(t)$$ and
thus $ g\in C^1(X) $ and $P_{1,1}(z)=z$, $z\in \mathbb{C}$.

If the result holds for $1\leq i< k$, then we will prove that it also holds for 
$i+1\leq k$.
Using our induction hypothesis,

$$g^{(i)}(t)=\sum\limits_{j=1}^i \dfrac{{d^j f}}{dz^j}(\gamma(t))P_{j,i}(\gamma^{'}(t),
\gamma^{''}(t),\dots,\gamma^{(i)}(t)).$$

We differentiate with respect to $t\in X$. Then 
$$g^{(i+1)}(t)=\sum\limits_{j=1}^i ( \dfrac{{d^{j+1} f}}{dz^{j+1}}(\gamma(t))(P_{j,i}
(\gamma^{'}(t),\gamma^{''}(t),\dots,\gamma^{(i)}(t))\gamma^{'}(t)
$$
$$+\dfrac{{d^j f}}{dz^j}(\gamma(t))\sum\limits_{s=1}^i{\dfrac{\partial P_{j,i}}{\partial 
z_s}}(\gamma^{'}(t),\gamma^{''}(t),\dots,\gamma^{(i)}(t))\gamma^{(s+1)}(t) ) =$$
$$=\dfrac{df}{dz}(\gamma(t))\sum\limits_{s=1}^i{\dfrac{\partial P_{1,i}}{\partial 
z_s}}(\gamma^{'}(t),\gamma^{''}(t),\dots,\gamma^{(i)}(t))\gamma^{s+1}(t)+$$
$$+\sum\limits_{j=2}^i \dfrac{{d^j f}}{dz^j}(\gamma(t))(P_{j-1,i}(\gamma^{'}(t),
\gamma^{''}(t),\dots,\gamma^{(i)}(t))\gamma^{'}(t)+$$ 
$$\sum\limits_{s=1}^i{\dfrac{\partial P_{j,i}}{\partial z_s}}
(\gamma^{'}(t),\gamma^{''}(t),\dots,\gamma^{(i)}(t))\gamma^{(s+1)}(t)))+$$
$$+\dfrac{{d^{i+1} f}}{dz^{i+1}}(\gamma(t))(P_{i,i}
(\gamma^{'}(t),\gamma^{''}(t),\dots,\gamma^{(i)}(t))\gamma^{'}(t)).$$
Thus we obtain $ g\in C^{i+1}(X) $, $$P_{1,i+1}(z_1,z_2,\dots,z_{i+1})=\sum\limits_{s=1}^i{\dfrac{\partial P_{1,i}}{\partial 
z_s}}(z_1,z_2,\dots,z_{i})z_{s+1},$$
$$P_{j,i+1}(z_1,z_2,\dots,z_{i+1})=\sum\limits_{s=1}^i{\dfrac{\partial P_{j,i}}{\partial 
z_s}}(z_1,z_2,\dots,z_{i})z_{s+1}+$$ 
$$P_{j-1,i}(z_1,z_2,\dots,z_{i})z_1,$$ for $j=2,3,\dots,i$, and
$$P_{i+1,i+1}(z_1,z_2,\dots,z_{i+1})=P_{i,i}(z_1,z_2,\dots,z_{i})z_1.$$
Therefore the result holds also for $i+1$ and the proof is complete.
\end{proof}

\begin{prop}
Let $\gamma : X \rightarrow \mathbb{C}$ is a homeomorphism onto $\gamma^*$ with $ \gamma'(t)\neq 0 $ and $ k\in \{1,2,3, 
\cdots\}\cup \{\infty\} $. If $ \gamma \in C^k(X) $, then $ C^{k}(X)=C^{k}(\gamma^*)\circ \gamma $.
\end{prop}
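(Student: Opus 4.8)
The statement asserts an equality of two function classes, so the plan is to establish the two inclusions $C^k(\gamma^*)\circ\gamma\subseteq C^k(I)$ and $C^k(I)\subseteq C^k(\gamma^*)\circ\gamma$ separately. The first inclusion is immediate from Lemma 2.15: if $f\in C^k(\gamma^*)$, then $g=f\circ\gamma\in C^k(I)$, which is precisely the forward inclusion. Hence all the work lies in the reverse inclusion: given $g\in C^k(I)$, I must produce an $f\in C^k(\gamma^*)$ with $g=f\circ\gamma$. Since $\gamma$ is a homeomorphism onto $\gamma^*$, there is only one candidate, namely $f=g\circ\gamma^{-1}$, which is well defined and satisfies $f\circ\gamma=g$; the content of the proof is to verify that this $f$ indeed lies in $C^k(\gamma^*)$.

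The first step is the base case, the existence and continuity of $\dfrac{df}{dz}$. For fixed $t_0\in I$, using $f\circ\gamma=g$ and factoring the difference quotient,
$$\dfrac{f(\gamma(t))-f(\gamma(t_0))}{\gamma(t)-\gamma(t_0)}=\dfrac{g(t)-g(t_0)}{t-t_0}\cdot\dfrac{t-t_0}{\gamma(t)-\gamma(t_0)}\longrightarrow\dfrac{g'(t_0)}{\gamma'(t_0)}$$
as $t\to t_0$, the limit existing because $g$ is differentiable and $\gamma'(t_0)\neq0$. Thus $\dfrac{df}{dz}(\gamma(t))=g'(t)/\gamma'(t)$ exists for every $t$ and, being a quotient of continuous functions with nonvanishing denominator, is continuous in $t$; since $\gamma^{-1}$ is continuous this is the same as continuity on $\gamma^*$.

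The heart of the argument is then an induction on $l$ that iterates this computation. I would set $g_0=g$ and $g_l=g_{l-1}'/\gamma'$ for $1\le l\le k$, and prove simultaneously that (a) $g_l\in C^{k-l}(I)$ and (b) $\dfrac{d^l f}{dz^l}(\gamma(t))=g_l(t)$ for all $t\in I$. Claim (a) holds because $g_{l-1}\in C^{k-l+1}(I)$ gives $g_{l-1}'\in C^{k-l}(I)$, while $\gamma'\in C^{k-1}(I)\subseteq C^{k-l}(I)$ with $\gamma'\neq0$, so the quotient lies in $C^{k-l}(I)$. For claim (b), the inductive hypothesis says the function $h_{l-1}:=\dfrac{d^{l-1}f}{dz^{l-1}}$ on $\gamma^*$ satisfies $h_{l-1}\circ\gamma=g_{l-1}$ with $g_{l-1}\in C^1(I)$ (here $l\le k$ is used), so applying the base-case computation to $h_{l-1}$ in place of $f$ gives $\dfrac{dh_{l-1}}{dz}(\gamma(t))=g_{l-1}'(t)/\gamma'(t)=g_l(t)$, which by Definition 2.13 is exactly $\dfrac{d^l f}{dz^l}(\gamma(t))$.

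Once the induction is complete, each $\dfrac{d^l f}{dz^l}\circ\gamma=g_l\in C^{k-l}(I)\subseteq C^0(I)$ is continuous in $t$, hence continuous on $\gamma^*$ by the homeomorphism property, so $f\in C^k(\gamma^*)$ and therefore $g=f\circ\gamma\in C^k(\gamma^*)\circ\gamma$; when $k=\infty$ the induction runs for all $l$, all quotients being legitimate since $\gamma'\in C^\infty$ and $\gamma'\neq0$. The step I expect to be the main obstacle is the bookkeeping of the smoothness loss: one must track that after $l$ differentiations the relevant objects live only in $C^{k-l}$ and check that this still suffices (namely is at least $C^1$) to differentiate once more, which is exactly where the constraint $l\le k$ and the nonvanishing of $\gamma'$ enter.
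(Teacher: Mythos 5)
Your proof is correct and takes essentially the same route as the paper: the forward inclusion via Lemma 2.15, and for the reverse inclusion the unique candidate $g\circ\gamma^{-1}$ together with the difference-quotient computation $\dfrac{df}{dz}(\gamma(t))=g'(t)/\gamma'(t)$, iterated by induction. The only (immaterial) organizational difference is that you induct on the derivative order $l$ for fixed $k$, tracking the smoothness loss $g_l\in C^{k-l}(I)$, whereas the paper inducts on the smoothness class $k$ itself, applying its induction hypothesis to $f'/\gamma'\in C^{k}(I)$.
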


\begin{proof}
By Lemma $2.15$ we have that $ C^{k}(X)\supset C^{k}(\gamma^*)\circ \gamma $. We proceed by proving inductively that $ C^{k}(X)\subset C^{k}(\gamma^*)\circ \gamma $. 

For $ k=1 $ whenever $ f \in C^{1}(X) $ we denote $ g=f\circ \gamma^{-1} $. Then $  g\circ \gamma=f  $ and $$\dfrac{{d g}}{dz}(\gamma(t_0))=\lim_{t\rightarrow 
t_0}\dfrac{g(\gamma(t))-g(\gamma(t_0))}{\gamma(t)-\gamma(t_0)}$$ $$=
\lim_{t\rightarrow t_0}\dfrac{f(t)-f(t_0)}{t-t_0}\dfrac{1}{\dfrac{\gamma(t)-\gamma(t_0)}{t-t_0}} =\dfrac{f'(t_0)}{\gamma^{'}(t_0)}$$ for $ t_0 \in X $. So
the function $ \dfrac{{d g}}{dz} $ exists and is continuous and hence $g \in C^{1}(\gamma^*) $.

Assume that the assertion holds for some $ k $. Given some $f\in C^{k+1}(X) $ we denote $ g=f\circ \gamma^{-1} $. Notice that $  g\circ \gamma=f  $ and $$\dfrac{{d g}}{dz}(\gamma(t_0))=\dfrac{(f'\circ \gamma^{-1})(\gamma(t_0))}{\gamma^{'}(t_0)}$$ for $ t_0 \in X $. From the induction hypothesis we obtain that $ f'\circ \gamma^{-1} \in C^{k}(\gamma^*)$. It is also true that $\gamma' \in C^{k}(X) $. It follows immediately that $ \dfrac{{d g}}{dz} \in C^{k}(\gamma^*)$ or equivalently $ g \in C^{k+1}(\gamma^*)$ and the proof is complete for every finite $ k $.\\
The case $ k=\infty $ follows from the definition of $ C^{\infty}(X)$ and the previous result for  finite $ k $ .
\end{proof}

As we have shown above if $\gamma$ is a homeomorphism defined on $X$, then 
$C(X)=C(\gamma^*)\circ\gamma$. If also $\gamma\in C^k(X)$, where 
$k\in\{0,1,2,\dots\}\cup\{\infty\}$, 
and $\gamma^{'}(t)\neq 0$ for every $t\in X$, then $C^k(X)=C^k(\gamma^*)\circ\gamma$.
This implies that the spaces $C^k(\gamma^*)$ and $C^k(\gamma)$ contain exactly the same
elements. Now we will prove that they also share the same topology.

\begin{prop}
Let $\gamma$ be a homeomorphism defined on $X$ of class
$C^k(X)$ and $\gamma^{'}(t)\neq 0$ for every $t\in X$.
Then the spaces $C^k(\gamma)$ and $C^k(\gamma^*)$ share the same topology.
\end{prop}

\begin{proof}
At first we will prove the proposition in the special case where $X$ is a compact interval 
$I\subset \mathbb{R}$ or the unit circle $T$ and $k\neq \infty$.
In order to do so we will find $0<M,N< \infty$ such that $d_1(f_1,f_2)\leq M 
d_2(f_1,f_2)$ and $d_2(f_1,f_2)\leq N d_1(f_1,f_2)$ for every $f_1, f_2 \in C^k(\gamma)=
C^k(\gamma^*)$,
where $d_1,d_2$ are the metrics of $C^k(\gamma)$ and $C^k(\gamma^*)$, respectively.

Let $f_1, f_2 \in C^k(\gamma^*)$ and
$g_1=f_1\circ\gamma, g_2=f_2\circ\gamma \in C^k(X)$. We notice that
$$\sup\limits_{t\in X}|g_1(t)-g_2(t)|=\sup\limits_{z\in \gamma^*}
|f_1(z)-f_2(z)|\leq d_2(f_1,f_2).$$
In addition, 
\begin{equation}{(g_1-g_2)}^{(i)}(t)=\sum\limits_{j=1}^i \dfrac{{d^j(f_1-f_2)}}{dz^j}
(\gamma(t))P_{j,i}
(\gamma^{'}(t),
\gamma^{''}(t),\dots,\gamma^{(i)}(t)),
\end{equation} for $1\leq j\leq i \leq k$, where $P_{j,i}$ are the
polynomials of Lemma $2.15$. If 
$m_{j,i}=\sup\limits_{t\in X}| P_{j,i}((\gamma^{'}(t),
\gamma^{''}(t),\dots,\gamma^{(i)}(t))|$ and $M_{i}=\max\{m_{j,i}, 1\leq j \leq i\}$, then
$$\sup\limits_{t\in X}|{(g_1-g_2)}^{(i)}(t)|\leq \sum\limits_{j=1}^i 
\sup\limits_{z\in \gamma^*} |\dfrac{{d^j(f_1-f_2)}}{dz^j}(z)|m_{j,i}$$ $$\leq M_{i}\sum
\limits_{j=1}^i\sup\limits_{z
\in \gamma^*} |\dfrac{{d^j(f_1-f_2)}}{dz^j}(z)|\leq M_{i}d_2(f_1,f_2).$$
Consequently $$d_1(f_1,f_2)\leq M d_2(f_1,f_2),$$ where $M=1+M_1+M_2+\dots+M_k$.

We also notice that
$$\sup\limits_{z\in \gamma^*}|f_1(z)-f_2(z)|=\sup
\limits_{t\in X}|g_1(t)-g_2(t)|\leq d_1(f_1,f_2).$$
We will prove inductively that $$\sup\limits_{z\in \gamma^*}
\left|\dfrac{d^i(f_1-f_2)}{dz^i}(z)\right|\leq N_id_1(f_1,f_2),$$
for some $N_i>0$, $i=1,2,\dots$, $i\leq k$.
Since $$P_{1,1}(z_1)=z_1$$ and
$$P_{i+1,i+1}(z_1,z_2,\dots,z_{i+1})=P_{i,i}(z_1,z_2,\dots,z_{i})z_1,$$
it is easy to see that $P_{i,i}(z_1,z_2,\dots,z_{i})={z_1}^i$. Thus
$$s_i=\inf\limits_{t\in X}|P_{i,i}((\gamma^{'}(t),
\gamma^{''}(t),\dots,\gamma^{(i)}(t))|>0,$$ since $\gamma'(t)\neq 0$ for every 
$t\in X$. For $i=1\leq k$ we have $$\sup\limits_{z\in \gamma^*}
\left|\dfrac{d(f_1-f_2)}{dz}(z)\right|\leq \dfrac{1}{s_1}\sup\limits_{t\in X}|(g_1-g_2)'(t)|
\leq \dfrac{1}{s_1}d_1(f_1,f_2),$$ and thus $N_1 =\dfrac{1}{s_1}$. 
Assume that the assertion holds for every $1\leq j\leq i<k$.
Using our induction hypothesis and equation $(1)$ we find 
$$\sup\limits_{t\in X}|{(g_1-g_2)}^{(i+1)}(t)|\geq \sup\limits_{z\in\gamma^*}\left|
\dfrac{d^{i+1}(f_1-f_2)}{dz^{i+1}}(z)\right|s_{i+1}$$ 
$$-\sum\limits_{j=2}^i \sup\limits_{z\in\gamma^*}\left|
\dfrac{{d^j(f_1-f_2)}}{dz^j}(z)\right| m_{j,i+1} \geq\sup\limits_{z\in\gamma^*}\left|
\dfrac{d^{i+1}(f_1-f_2)}{dz^{i+1}}(z)\right|s_{i+1}$$ 
$$-\sum\limits_{j=2}^i N_{j}m_{j,i+1}d_1(f_1,f_2).$$ 
Therefore
$$\sup\limits_{z\in\gamma^*}\left|\dfrac{d^{i+1}(f_1-f_2)}{dz^{i+1}}(z)\right|\leq
N_{i+1}d_1(f_1,f_2),$$ 
where $N_{i+1}=\dfrac{1}{s_{i+1}}+\sum\limits_{j=2}^i \dfrac{N_{j}m_{j,i+1}}{s_{i+1}}$
and the result holds also for $i+1$. The induction is complete.
Now it is easy to deduce that $$d_2(f_1,f_2)\leq N d_1(f_1,f_2),$$
where $N=1+N_1+N_2+\dots+N_k$.

It follows immediately from the above that even in the case where $X$ is any type of interval 
$I\subset\mathbb{R}$ and/or $k=\infty$ the respective topologies of 
the spaces $C^k(\gamma)$ and $C^k(\gamma^*)$ are the same. The basic open subsets of 
$C^k(\gamma)$ are defined by a compact subset of $X$, an $l\in \{0,1,2,\dots\}$, $l\leq k$, 
a function $f\in C^k(\gamma)$ and an $\varepsilon>0$. But if we recall the definition of the
topology of $C^k(\gamma^*)$ and use the above we realize that this basic open subset of $C^k(\gamma)$ is also an open subset of 
$C^k(\gamma^*)$. Similarly every basic open subset of $C^k(\gamma^*)$ is an open 
subset of $C^k(\gamma)$. The proof is complete.
\end{proof}

Combining Propositions $2.14$, $2.16$ and $2.17$ we obtain the following theorem.

\begin{theorem}
Let $\gamma : X \rightarrow \mathbb{C}$ is a homeomorphism onto $\gamma^*$ with $ \gamma'(t)\neq 0 $ for every $ 
t \in X $ and 
$ k\in \{1,2,3, \cdots\}\cup \{\infty\} $. Then $ C^{k}(X)=C^{k}(\gamma^*)\circ \gamma $ if and 
only if $ \gamma \in C^k(X) $. In addition the spaces $C^{k}(\gamma)$ and $ C^{k}(\gamma^*) $ share the same topology.
\end{theorem}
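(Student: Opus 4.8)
The plan is to assemble Theorem 2.18 directly from the three preceding propositions, since each half of the statement has already been established in this section. No genuinely new argument is required; the work is in checking that the hypotheses line up and that the pieces are invoked in the correct order.

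First I would handle the biconditional $C^{k}(I)=C^{k}(\gamma^*)\circ\gamma \iff \gamma\in C^{k}(I)$. The forward implication---that the set equality forces $\gamma\in C^{k}(I)$---is precisely Proposition 2.14, whose proof tests the equality on the distinguished function $f$ with $f(\gamma(t))=\gamma(t)$ and reads off the smoothness of $\gamma$ itself from $\gamma=f\circ\gamma\in C^{k}(I)$. The reverse implication---that $\gamma\in C^{k}(I)$ yields the set equality---is precisely Proposition 2.16, which uses the higher-order chain rule of Lemma 2.15 for the inclusion $C^{k}(\gamma^*)\circ\gamma\subseteq C^{k}(I)$ and an inductive argument, exploiting $\gamma'(t)\neq 0$, for the reverse inclusion. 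Taken together these two propositions deliver the equivalence verbatim.

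Next I would address the topological assertion. Whenever either of the equivalent conditions holds we have in particular $\gamma\in C^{k}(I)$, so all the hypotheses of Proposition 2.17 are satisfied: $\gamma$ is a homeomorphism of $X\in\{I,T\}$, lies in $C^{k}(X)$, and has non-vanishing derivative. Proposition 2.17 then asserts that $C^{k}(\gamma)$ and $C^{k}(\gamma^*)$ carry the same topology, which is exactly the remaining claim. The only point to keep in mind---and the closest thing to an obstacle---is that the topological equivalence of Proposition 2.17 is conditional on $\gamma\in C^{k}(X)$; it must therefore be invoked only after the smoothness of $\gamma$ has been secured from the biconditional, and not as a standalone consequence of the set equality. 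Once that ordering is respected the three statements fit together with no further computation.
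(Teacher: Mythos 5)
Your proposal is correct and matches the paper exactly: the paper offers no separate proof, stating only that Theorem 2.18 is obtained by ``Combining Propositions 2.14, 2.16 and 2.17,'' which is precisely your assembly (2.14 for the forward implication, 2.16 for the converse, 2.17 for the topological claim). Your additional remark about invoking Proposition 2.17 only after $\gamma\in C^{k}(I)$ has been secured is a sensible clarification of the same argument, not a deviation from it.
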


\begin{remark}
One can prove a slightly stronger statement than that of Theorem $2.18$. We do not need to 
assume $ \gamma'(t)\neq 0 $. Then $ C^{k}(X)=C^{k}(\gamma^*)\circ \gamma $ if and only if 
$\gamma \in C^k(X) $ and $ \gamma'(t)\neq 0 $ for every $ t \in X $.
\end{remark}

\begin{remark}
With the definition of the derivative as in the Remark $2.12$ we can define the spaces $ C^k(E) $ for more general sets $ E\subset \mathbb{C} $ but it may 
occur that the space $ C^k(E) $ is not complete.
\end{remark}

Theorem $2.18$ shows that if $ \gamma $ is a homeomorphism and $ \gamma'(t)\neq 0 $ for $ t \in X
$, then $ C^k({\gamma})\approx C^k(\gamma^*) $. Therefore 
we obtain the following corollary.

\begin{corol}
Let $\gamma : X \rightarrow \mathbb{C}$ is a homeomorphism onto $\gamma^*$ with $ \gamma'(t)\neq 0 $ for every $ t\in X$ and $ k\in \{1,2,3, \dots\}\cup \{\infty\} $. Then the space $ C^k({\gamma}) $ is independent of the parametrization of $ \gamma $ and coincides with the space $ C^k(\gamma^*) $.
\end{corol}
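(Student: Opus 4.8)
The plan is to obtain the corollary directly from Theorem 2.18, the only additional ingredient being the observation that the space $C^k(\gamma^*)$ is intrinsic to the image set $\gamma^*$ and in no way refers to the chosen parametrization. First I would fix an admissible parametrization $\gamma$, that is, a homeomorphism with $\gamma'(t)\neq 0$ and (so that $C^k(\gamma)$ and $C^k(\gamma^*)$ may be compared through Theorem 2.18) with $\gamma\in C^k(I)$. By Theorem 2.18 the spaces $C^k(\gamma)$ and $C^k(\gamma^*)$ then consist of exactly the same functions and carry the same topology, so $C^k(\gamma)\approx C^k(\gamma^*)$ as topological vector spaces.

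The decisive point I would emphasize is that $C^k(\gamma^*)$, as introduced in Definition 2.13, never mentions $\gamma$ as a map. Indeed, since $\gamma$ is a homeomorphism onto $\gamma^*$, the derivative $\frac{df}{dz}(\gamma(t_0))$ may be computed by Remark 2.12 as the limit of the quotients $\frac{f(z)-f(\gamma(t_0))}{z-\gamma(t_0)}$ over $z\in\gamma^*$ with $z\to\gamma(t_0)$; this limit, and inductively every higher derivative $\frac{d^l f}{dz^l}$, depends only on the values of $f$ on $\gamma^*$ and on the set $\gamma^*$ itself, not on the way it is traversed. The seminorms defining the topology of $C^k(\gamma^*)$ are suprema of these intrinsic derivatives over compact subsets of $\gamma^*$. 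Hence both the underlying set of functions and the topology of $C^k(\gamma^*)$ are determined by $\gamma^*$ alone.

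Putting the two facts together, for any admissible parametrization $\gamma$ of a fixed curve one has $C^k(\gamma)\approx C^k(\gamma^*)$, and the right-hand side is unchanged if $\gamma$ is replaced by any other parametrization with the same image $\gamma^*$. Consequently $C^k(\gamma)$ is independent of the parametrization and coincides with $C^k(\gamma^*)$, which is exactly the assertion.

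I do not expect a genuine obstacle, since the corollary merely consolidates Theorem 2.18; the one routine point worth recording is that the topology of $C^k(\gamma^*)$ does not depend on the particular increasing compact exhaustion $(I_n)$ of $\gamma^*$ used in Definition 2.13. This is the standard observation that any two such exhaustions generate mutually dominating families of seminorms, because every compact subset of $\gamma^*$ lies in some $I_n$; once this is noted, the intrinsic character of $C^k(\gamma^*)$ is complete and the corollary follows.
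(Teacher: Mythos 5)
Your proposal is correct and follows essentially the same route as the paper: the paper derives the corollary directly from Theorem 2.18 together with the (implicit) observation that $C^k(\gamma^*)$, being defined through the intrinsic derivatives of Definition 2.13 and Remark 2.12, depends only on the image $\gamma^*$. Your explicit remarks on the need for $\gamma\in C^k(I)$ when invoking Theorem 2.18 and on the independence of the topology from the compact exhaustion $(I_n)$ are sound elaborations of points the paper leaves unstated.
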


\section{Continuous analytic capacities}
In this section we present a few facts for the notion of continuous analytic capacity (\cite{[7]}) that we will need in sections $5$ and $7$ below. Section $7$ leads us to 
generalise this notion and thus define the notion of $p$-continuous analytic capacity.

\begin{defi}
Let $ U\subset\mathbb C $ be open. A function $ f $ belongs to the class $ A(U) $ if $ f\in H(U) $ and $ f $ has a continuous extension on $ \overline{U} $, 
where the closure of $ U $ is taken in $ \mathbb C $. 
\end{defi}

\begin{defi}
Let  $ \Omega $ be the complement in $ \mathbb{C} $ of a compact set. A function $ f $ belongs to the class $ A(\Omega\cup\{\infty\}) $ if $ f\in 
H(\Omega)\cap C(\Omega\cup\{\infty\}) $ and $ f $ has a continuous extension on $ \overline{\Omega} $, where the closure of $ \Omega $ is taken in 
$\mathbb{C}$. 
\end{defi}

By Tietze's extension theorem the extensions in both previous definitions can be considered as extensions on the whole of $ \mathbb C\cup\{\infty\} $ without 
increase of the original norm $ \|f\|_\infty $.

\begin{defi}
Let $ L $ be a compact subset of $ \mathbb{C} $. Let also $ \Omega=\mathbb{C}\setminus L $. We 
denote 
$$a(L)=\sup\{|\lim_{z\rightarrow \infty}z(f(z)-f(\infty))|: f\in A(\Omega\cup\{\infty\}), \| f
\|_{\infty}\leq 1\}$$
the continuous analytic capacity of $ L $.
\end{defi}

It is well known (\cite{[7]}) that $ a(L)=0 $ if and only if $ A(\Omega\cup\{\infty\}) $ 
contains only constant functions.

\begin{theorem}
Let $ L $ be a compact subset of $ \mathbb{C} $ and $ U \subset \mathbb{C} $ be open with $ L\subset U $. Then $ a(L)=0 $ if and only if every $f \in 
C(U)\cap H(U\setminus L) $ belongs to $ H(U) $.
\end{theorem}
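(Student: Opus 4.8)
The plan is to prove both implications by combining Vitushkin's localization technique with the Cauchy transform and the stated characterization that $a(L)=0$ exactly when $A(\Omega\cup\{\infty\})$, $\Omega=\mathbb{C}\setminus L$, consists only of constants. For the direction $a(L)=0\Rightarrow$ removability, I would argue locally. Fix $f\in C(U)\cap H(U\setminus L)$ and an arbitrary point $z_0\in U$, and choose a smooth cutoff $\phi$ with $\phi\equiv 1$ on a disc $D(z_0,r/2)$ and $\supp\phi\subset D(z_0,r)\subset U$. Writing $Ch(z)=\frac{1}{\pi}\int \frac{h(\zeta)}{z-\zeta}\,dm(\zeta)$ for the Cauchy transform (area measure $dm$), so that $\bar\partial\,Ch=h$ and $Ch$ is continuous whenever $h$ is bounded with compact support, I would form the localized function
$$g=\phi f - C\!\left(f\,\bar\partial\phi\right).$$
Then $g$ is continuous on all of $\mathbb{C}$, vanishes at $\infty$ (the first term has compact support and the second tends to $0$), and satisfies $\bar\partial g=(\bar\partial\phi)f+\phi\,\bar\partial f-f\,\bar\partial\phi=\phi\,\bar\partial f$. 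Since $f$ is holomorphic on $U\setminus L$ and $\supp\phi\subset U$, the distribution $\bar\partial g$ is supported in $\supp\phi\cap L\subseteq L$, so $g$ is holomorphic on $\Omega=\mathbb{C}\setminus L$. Hence $g\in A(\Omega\cup\{\infty\})$ with $g(\infty)=0$, and the hypothesis $a(L)=0$ forces $g$ to be constant, i.e.\ $g\equiv 0$.

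Consequently $\phi f=C(f\,\bar\partial\phi)$ on $\mathbb{C}$, and on the disc $D(z_0,r/2)$, where $\phi\equiv 1$, this reads $f(z)=\frac{1}{\pi}\int \frac{f(\zeta)\,\bar\partial\phi(\zeta)}{z-\zeta}\,dm(\zeta)$. The density $\bar\partial\phi$ is supported in the annular region $\{r/2\le|\zeta-z_0|\le r\}$, bounded away from $D(z_0,r/2)$, so for $z$ in this disc one may differentiate under the integral to see that the right-hand side is holomorphic in $z$. Thus $f$ is holomorphic near $z_0$; since $z_0\in U$ was arbitrary, $f\in H(U)$.

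For the converse I argue by contraposition: assume $a(L)>0$ and exhibit a non-removable function. If $\operatorname{int}L\neq\emptyset$, a smooth real-valued nonconstant bump $\psi$ supported in $\operatorname{int}L$ lies in $C(U)\cap H(U\setminus L)$ (it vanishes on $U\setminus L$) but is not in $H(U)$. Otherwise $\operatorname{int}L=\emptyset$, whence $\overline{\Omega}=\mathbb{C}$ and every element of $A(\Omega\cup\{\infty\})$ is genuinely defined and continuous on $\mathbb{C}$; since $a(L)>0$ there is a nonconstant such $f$. Then $f|_U\in C(U)\cap H(U\setminus L)$, and if removability held we would get $f\in H(U)$, so $f$ would be holomorphic on $U\cup\Omega=\mathbb{C}$ (as $U\supseteq L$), hence entire and bounded, therefore constant by Liouville — a contradiction.

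I expect the main obstacle to be the justification of the analytic properties of the localized function $g$: namely that $C(f\,\bar\partial\phi)$ is continuous on $\mathbb{C}$ (which rests on the local integrability of $1/|z|$ in the plane) and that the identity $\bar\partial\,Ch=h$ is legitimately applied to the merely continuous density $h=f\,\bar\partial\phi$, so that $g$ is an honest member of the class $A(\Omega\cup\{\infty\})$ and not just a distributional solution of the $\bar\partial$-equation. Once these are in place, the capacity hypothesis applies directly and the rest is routine.
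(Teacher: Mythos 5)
Your proposal is correct, but it follows a genuinely different route from the paper's own proof of this theorem. For the direction $a(L)=0\Rightarrow f\in H(U)$, the paper argues globally with contour integrals: it picks two closed curves $\gamma_1,\gamma_2$ in $U$ with $\gamma_1$ surrounding $L$ and $\gamma_2$ surrounding $\gamma_1$, forms $\phi_i(z)=\frac{1}{2\pi i}\int_{\gamma_i}\frac{f(\zeta)}{\zeta-z}\,d\zeta$, and glues $\phi_2-f$ (inside $\gamma_2$) with $\phi_1$ (outside $\gamma_1$) — consistent on the annular overlap by Cauchy's formula — to get an element of $A(\Omega\cup\{\infty\})$, which is then constant, so that $f=\phi_2-\mathrm{const}$ is holomorphic inside $\gamma_2$; nothing beyond the Cauchy integral formula is used. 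Your function $g=\phi f-C(f\bar\partial\phi)$ is exactly Vitushkin's localization operator, which the paper introduces in its equation (2) and reserves for the proof of Theorem 3.6, the version of this statement for \emph{closed}, not necessarily compact, $L$; so your method appears in the paper, but attached to the harder theorem. The trade-off: the paper's two-contour decomposition is elementary and self-contained but inherently global (it uses compactness of $L$ to enclose it by curves lying in $U$), whereas your localization is local and generalizes immediately to closed $L$, at the price of the technical facts you yourself flag — continuity of the Cauchy transform of a bounded compactly supported density, the distributional identity $\bar\partial Ch=h$, and Weyl's lemma to upgrade the continuous, distributionally holomorphic $g$ to an honest member of $A(\Omega\cup\{\infty\})$ (the paper invokes this machinery without proof, citing Gamelin, when proving Theorem 3.6). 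For the converse, the paper is more direct than you are: it Tietze-extends an arbitrary $f\in A(\Omega\cup\{\infty\})$ across $L$, feeds the extension into the removability hypothesis, and concludes by Liouville; your contrapositive with the case split on $\operatorname{int}L$ is correct, but the split is unnecessary, since the Tietze argument works whether or not $L$ has interior.
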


\begin{proof}
Assume that every $f \in C(U)\cap H(U\setminus L) $ belongs to $ H(U) $.\\
We consider an arbitrary $ f\in A(\Omega\cup\{\infty\}) $. Since $ f $ can be continuously extended over $ L $, it belongs to $ C(U)\cap H(U\setminus L) $ and 
thus to $ H(U)$. Therefore $ f $ is analytic in $ \mathbb C $ and continuous in $ \mathbb C\cup\{\infty\} $ and hence it is constant.\\
Thus $ a(L)=0 $.\\
Now assume $ a(L)=0 $ and we consider any $f \in C(U)\cap H(U\setminus L) $.\\
There exist two closed curves $ \gamma_1 $ and $\gamma_2$ in $ U $ so that $ \gamma_1 $ surrounds $ L $ and $ \gamma_2$ surrounds $ \gamma_1 $. We define the 
analytic functions
$$ \phi_1(z)=\frac 1{2\pi i}\int_{\gamma_1}\frac{f(\zeta)}{\zeta-z}\,d\zeta\qquad\text{for}\,\, z\,\,\text{in the exterior of}\,\,\gamma_1 $$ 
and
$$ \phi_2(z)=\frac 1{2\pi i}\int_{\gamma_2}\frac{f(\zeta)}{\zeta-z}\,d\zeta\qquad\text{for}\,\, z\,\,\text{in the interior of}\,\,\gamma_2.$$
Then the function $ g $ which equals $ \phi_2-f $ in the interior of $ \gamma_2 $ and $ \phi_1 $ in the exterior of $ \gamma_1 $ is well defined and belongs to
$ A(\Omega\cup\{\infty\}) $. Therefore $ g $ is constant and thus $ f $ is analytic in the interior of $ \gamma_2 $. Hence $ f\in H(U) $.
\end{proof}

Due to the local nature of the proof of the next theorem we shall state a few facts about the 
so-called Vitushkin's localization operator (\cite{[8]}).

Let $ U\subset\mathbb C $ be open and $ f\in C(\mathbb C)\cap H(U) $. Let also $ g\in C^1(\mathbb C) $ have compact support. We define the function
\begin{equation}
\begin{split}
G(z)&=\frac 1{\pi}\iint\frac{f(z)-f(w)}{z-w}\,\frac{\partial g}{\partial \overline w}(w)\,dm(w)\\
&=f(z)g(z)-\frac 1{\pi}\iint\frac{f(w)}{z-w}\,\frac{\partial g}{\partial \overline w}(w)\,dm(w).
\end{split}
\end{equation}
The function $ G $ is continuous in $ \mathbb C\cup\{\infty\} $ with $ G(\infty)=0 $, analytic in $ U\cup(\mathbb C\setminus\supp g)$ and $f-G$ is analytic in 
the interior of the set $ g^{-1}(\{1\}) $.

\begin{defi}
Let $ L $ be a closed subset of $ \mathbb C $. We define 
$$a(L)=\sup\{ a(M) : M\,\,\text{compact subset of}\,\,L\}$$
the continuous analytic capacity of $ L $.
\end{defi}

\begin{theorem}
Let $ L $ be a closed subset of $ \mathbb{C} $. Then $ a(L)=0  $ if and only if for every open set $  U\subset \mathbb{C} $ every $f \in C(U)\cap H(U\setminus 
L) $ belongs to $ H(U) $.
\end{theorem}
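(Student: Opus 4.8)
The plan is to reduce everything to compact subsets of $L$ and to the local nature of holomorphy. Recall that by Definition 3.5, $a(L)=0$ means precisely that $a(M)=0$ for every compact $M\subseteq L$, and that (by the fact recalled after Definition 3.3) $a(M)=0$ is equivalent to $A((\mathbb C\setminus M)\cup\{\infty\})$ consisting only of constants. Both implications will be obtained by localising around points of $L$ and invoking these two equivalences.

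For the implication $a(L)=0\Rightarrow$ removability, fix an open $U$ and $f\in C(U)\cap H(U\setminus L)$. Since $f$ is already holomorphic on $U\setminus L$, it suffices to show that $f$ is holomorphic near each $z_0\in L\cap U$. I would first pick $R>0$ with $\overline{D}(z_0,R)\subseteq U$ and set $M=L\cap\overline{D}(z_0,R)$, a compact subset of $L$, so that $a(M)=0$. One cannot apply Theorem 3.4 directly on a disc around $z_0$, since inside such a disc the singular set $L$ is in general neither compact nor contained in the disc; reconciling this mismatch is the main obstacle. I would resolve it with Vitushkin's localisation operator: choose $g\in C^1(\mathbb C)$ with $\supp g\subseteq\overline{D}(z_0,R)$ and $g\equiv1$ on a smaller disc $\overline{D}(z_0,r)$, extend $f$ to $\hat f\in C(\mathbb C)$ by Tietze without altering it on $\supp g$, and form $G(z)=\hat f(z)g(z)-\frac1\pi\iint\frac{\hat f(w)}{z-w}\frac{\partial g}{\partial\overline w}(w)\,dm(w)$.

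Applying the stated properties of $G$ with the domain of holomorphy taken to be $U\setminus L$, the output $G$ is continuous on $\mathbb C\cup\{\infty\}$ with $G(\infty)=0$ and analytic off $L\cap\supp g\subseteq M$; hence $G\in A((\mathbb C\setminus M)\cup\{\infty\})$. Since $a(M)=0$, this class contains only constants, so $G$ is constant and $G(\infty)=0$ forces $G\equiv0$. Because $\hat f-G$ is analytic on the interior of $g^{-1}(\{1\})\supseteq D(z_0,r)$ and $\hat f=f$ there, $f$ is holomorphic at $z_0$. As $z_0$ ranges over $L\cap U$ and $f$ is already holomorphic on $U\setminus L$, we conclude $f\in H(U)$.

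For the converse, assume the removability statement holds for every open set. Let $M\subseteq L$ be compact and let $h\in A((\mathbb C\setminus M)\cup\{\infty\})$; then $h\in C(\mathbb C)$ and, since $\mathbb C\setminus L\subseteq\mathbb C\setminus M$, also $h\in H(\mathbb C\setminus L)$. Applying the hypothesis with $U=\mathbb C$ gives $h\in H(\mathbb C)$, so $h$ is entire and, being continuous at $\infty$, bounded; by Liouville's theorem $h$ is constant. Thus $A((\mathbb C\setminus M)\cup\{\infty\})$ contains only constants, i.e. $a(M)=0$, and taking the supremum over all compact $M\subseteq L$ yields $a(L)=0$. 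I expect the only genuinely delicate point to be the forward localisation step, where Vitushkin's operator is exactly the device that converts the local, possibly non-compact singular set into a compact set of vanishing capacity.
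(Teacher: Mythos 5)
Your converse direction is correct and is essentially the paper's (the paper gets it ``immediately'' from Theorem 3.4 and Definition 3.5; your route through the constants-only characterization of $a(M)=0$ plus Liouville is the same computation made explicit). Your forward direction also follows the paper's strategy (Vitushkin localization), but as written it contains a genuine slip at exactly the step you flagged as delicate.

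The problem is the sentence where you apply the properties of the Vitushkin operator ``with the domain of holomorphy taken to be $U\setminus L$.'' Your $\hat f$ is \emph{not} in $C(\mathbb C)\cap H(U\setminus L)$: it is a Tietze extension of $f$ restricted to a closed set ($\supp g$, or at best $\overline{D}(z_0,R)$), and outside that closed set $\hat f$ is an arbitrary continuous function, not analytic anywhere in general; no continuous extension of all of $f|_U$ to $\mathbb C$ need exist, since $f$ may blow up at $\partial U$. Hence the operator's stated property only yields that $G$ is analytic on $\bigl(D(z_0,R)\setminus L\bigr)\cup\bigl(\mathbb C\setminus\supp g\bigr)$, whose complement is
$$\bigl(L\cap\supp g\bigr)\cup\bigl(\partial D(z_0,R)\cap\supp g\bigr),$$
not $L\cap\supp g$ alone. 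Since you only required $\supp g\subseteq\overline{D}(z_0,R)$, the support may touch the circle $\partial D(z_0,R)$, and then $G$ can fail to be analytic at points of $\partial D(z_0,R)\setminus L$; in that case $G\notin A\bigl((\mathbb C\setminus M)\cup\{\infty\}\bigr)$ and the capacity argument cannot be run. The repair is precisely the paper's three-disc bookkeeping: take $\overline{D(z_0,3\delta)}\subseteq U$, extend $f|_{\overline{D(z_0,3\delta)}}$ by Tietze, and require $\supp g\subseteq\overline{D(z_0,2\delta)}$, so that $\supp g$ is compactly contained in the open disc on which $\hat f=f$ is known to be analytic off $L$; then the non-analyticity set of $G$ is exactly $\overline{D(z_0,2\delta)}\cap L$, a compact subset of $L$, and the rest of your argument (constancy of $G$, $G(\infty)=0$, analyticity of $f=\hat f-G+G$ on $D(z_0,\delta)$) goes through verbatim. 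With that one change your proof coincides with the paper's.
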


\begin{proof}
One direction is immediate from Theorem $3.4$ and Definition $3.5$ and hence we assume that $ a(L)=0 $.\\
We consider an arbitrary open set $  U\subset \mathbb{C} $ which intersects $ L $ and an arbitrary $f \in C(U)\cap H(U\setminus L) $ and we shall prove that 
$ f $ extends analytically over $U\cap L $.\\
Now $ L $ may not be contained in $ U $ but since analyticity is a local property we shall employ Vitushkin's localization operator.\\
Let $ z_0\in U\cap L $ and $ \overline{D(z_0,3\delta)}\subset U $. We consider $ g\in C^1(\mathbb C) $ with $ \supp g\subset \overline{D(z_0,2\delta)} $ such 
that $ g=1 $ in $ \overline{D(z_0,\delta)} $.\\
We also consider the restriction $ F $ of $ f $ in $ \overline{D(z_0,3\delta)} $ and we extend $ F $ so that it is continuous in $ \mathbb C\cup\{\infty\} $.\\
We define as in $(2)$ the function
$$ G(z)=\frac 1{\pi}\iint\frac{F(z)-F(w)}{z-w}\,\frac{\partial g}{\partial \overline w}(w)\,dm(w).$$
Now $G$ is continuous in $ \mathbb C\cup\{\infty\} $ with $ G(\infty)=0 $, analytic in $ (D(z_0,3\delta)\setminus L)\cup(\mathbb C\setminus 
D(z_0,2\delta))=\mathbb C\setminus(\overline{D(z_0,2\delta)}\cap L)$ and $f-G=F-G$ is analytic in $ D(z_0,\delta) $. \\
Since $ a(L)=0 $, we have $ a(\overline{D(z_0,2\delta)}\cap L)=0$ and hence $ G $ is constant $ 0 $ in $ \mathbb C $. Therefore $ f $ is analytic in $ 
D(z_0,\delta) $.\\
Since $ z_0\in U\cap L $ is arbitrary we conclude that $ f\in H(U) $.
\end{proof}

\begin{theorem}
(\cite{[7]}) If $ L $ is a Jordan arc with locally finite length, then $ a(L)=0 $. The same holds for any countable union of such curves. Therefore line segments, 
circular arcs, analytic curves and boundaries of convex sets 
are all of zero continuous analytic capacity.
\end{theorem}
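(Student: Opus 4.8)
The plan is to isolate the analytic heart of the statement in the single assertion that a \emph{compact} set $E\subset\mathbb C$ of finite length has $a(E)=0$, and then to obtain the arc of locally finite length and the countable unions by a soft category argument based on Theorems 3.4 and 3.6. These reductions are legitimate: if $\gamma:I\to\mathbb C$ is a Jordan arc of locally finite length and $I=\bigcup_n J_n$ with $J_n$ compact and increasing, then $L=\gamma(I)=\bigcup_n\gamma(J_n)$ is a countable union of compact arcs each of finite length, and a countable union of such arcs is again a countable union of compact sets of finite length. So everything rests on the compact finite-length case together with a passage to countable unions.

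For the compact case, fix $E$ with $\mathcal H^1(E)<\infty$ (one-dimensional Hausdorff measure). By the characterization recorded after Definition 3.3 it suffices to show that every $f\in A(\Omega\cup\{\infty\})$, $\Omega=\mathbb C\setminus E$, is constant; equivalently, by Theorem 3.4, that every $f$ continuous on a neighborhood $U\supset E$ and holomorphic on $U\setminus E$ is holomorphic on $U$. I would prove this by a Cauchy-integral cancellation. Normalizing $f(\infty)=0$ and fixing $z\notin E$, I write $f(z)$ via Cauchy's theorem on a large disc with a thin neighborhood $N_\eta=\{w:\operatorname{dist}(w,E)<\eta\}$ of $E$ removed:
$$f(z)=\frac1{2\pi i}\oint_{|w|=R}\frac{f(w)}{w-z}\,dw-\frac1{2\pi i}\oint_{\partial N_\eta}\frac{f(w)}{w-z}\,dw .$$
The first term is holomorphic in $z$ for $|z|<R$, so it is enough to show the second tends to $0$ as $\eta\to0$. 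The boundary $\partial N_\eta$ runs along the two sides of $E$ in opposite directions; since $f$ is continuous \emph{across} $E$, points $w_+,w_-$ on opposite sides with $|w_+-w_-|\le2\eta$ carry values differing by at most the modulus of continuity $\omega_f(2\eta)$, while $1/(w_\pm-z)$ are nearly equal. Pairing these contributions, the finiteness of the length of $E$ bounds the total by a constant multiple of $\operatorname{length}(E)\,\omega_f(2\eta)/\operatorname{dist}(z,E)$, which vanishes with $\eta$. Hence $f$ is holomorphic across $E$ and $a(E)=0$.

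To pass to a countable union $L=\bigcup_n E_n$ of closed sets with $a(E_n)=0$, I would argue by Baire category using Theorem 3.6 and Definition 3.5. By Definition 3.5 it suffices to show $a(M)=0$ for every compact $M\subset L$, and by Theorem 3.4 this amounts to showing that any $f\in C(U)\cap H(U\setminus M)$ lies in $H(U)$. Let $S\subset U$ be the (relatively closed) set of points at which $f$ fails to be holomorphic; then $S\subset M\subset\bigcup_n E_n$, so $S=\bigcup_n(S\cap E_n)$ with each $S\cap E_n$ closed in $S$. If $S\neq\emptyset$, localizing to a small closed disc $\overline D\subset U$ meeting $S$ makes $S\cap\overline D$ a compact (hence complete) metric space, and Baire's theorem yields an open $V\subset D$ with $\emptyset\neq V\cap S\subset E_n$ for some $n$. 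Then $f\in C(V)\cap H(V\setminus E_n)$, and since $a(E_n)=0$, Theorem 3.6 forces $f\in H(V)$, i.e.\ $V\cap S=\emptyset$, a contradiction. Thus $S=\emptyset$, $f\in H(U)$, and $a(L)=0$.

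Finally, the listed examples all fall under the above: a line segment and a circular arc are compact arcs of finite length; an analytic curve is locally the image of a holomorphic map with nonvanishing derivative, hence of locally finite length and so a countable union of compact finite-length arcs; and the boundary of a convex set is a rectifiable Jordan curve, thus a finite union of finite-length arcs. The main obstacle is making the side-by-side cancellation of the compact case fully rigorous for an arbitrary rectifiable arc, whose neighborhood $N_\eta$ need not possess a clean two-sided boundary where $E$ nearly meets itself; for analytic (or smooth) arcs this difficulty disappears, since there $N_\eta$ is a genuine tube and the cancellation is the content of a Schwarz-type reflection/Morera argument, while the general rectifiable case is precisely the classical finite-length removability theorem of \cite{[7]}.
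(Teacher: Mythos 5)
First, the point of comparison: the paper does not prove this theorem at all --- it is quoted from Garnett \cite{[7]}, citation included, and the countable-union clause is part of that quotation. So your proposal has to stand on its own. Its soft parts do stand: the decomposition of a Jordan arc of locally finite length into countably many compact arcs of finite length is legitimate, and your Baire-category argument for countable unions is correct and is a genuine proof of that clause from the paper's own machinery. Indeed, for compact $M\subset\bigcup_n E_n$ and $f\in C(U)\cap H(U\setminus M)$, the non-holomorphy set $S$ is a compact subset of $M$; Baire applied to $S=\bigcup_n(S\cap E_n)$ yields an open $V$ with $\emptyset\neq V\cap S\subset E_n$; the forward direction of Theorem 3.6 (each $E_n$ closed with $a(E_n)=0$) forces $f\in H(V)$, a contradiction; and then the converse direction of Theorem 3.4 together with Definition 3.5 finishes. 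The examples are also handled correctly.

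The genuine gap is the analytic core, and it is fatal to the proposal read as a proof: the claim that a compact set $E$ of finite length has $a(E)=0$ (Besicovitch's theorem) is precisely what your Cauchy-integral cancellation does not establish, as you concede in your final paragraph. For a general rectifiable arc the neighborhood $N_\eta=\{w:\operatorname{dist}(w,E)<\eta\}$ has no two-sided tube structure: wherever $E$ comes within $2\eta$ of itself at parameter-distant points, $N_\eta$ develops bridges, $\partial N_\eta$ does not split into two curves running along ``sides'' of $E$ (an arc does not even separate the plane, so the sides are not globally defined), and no pairing $w_+\leftrightarrow w_-$ is actually constructed; you would in addition need a length bound such as $\mathcal{H}^1(\partial N_\eta)=O(\mathcal{H}^1(E))$ along a sequence $\eta\to 0$, which you do not supply. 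So, strictly read, your proposal proves ``a countable union of closed sets of zero continuous analytic capacity has zero continuous analytic capacity'' and then cites \cite{[7]} for the compact finite-length case --- that is, it reduces the theorem to the very result being quoted (which, to be fair, is all the paper does too). If you wanted to make the compact case self-contained, the classical argument is not a repaired version of your cancellation: it uses Vitushkin's localization operator (already introduced before Theorem 3.6), the estimate $\|G_j\|_\infty\leq C\,\omega_f(\delta)$ for the pieces localized to a grid of squares of side $\delta$ (this is exactly where continuity of $f$, as opposed to mere boundedness, enters), the Painlev\'e-type bound $|G_j'(\infty)|\leq C\,\|G_j\|_\infty\,\mathcal{H}^1(E\cap Q_j)$, and summation over the grid to get $|f'(\infty)|\leq C\,\omega_f(\delta)\,\mathcal{H}^1(E)\to 0$ as $\delta\to 0$.
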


\begin{defi}
Let $ U $ be an open subset of $ \mathbb C $ and $ p \in \{0,1,2, \cdots \}\cup\{\infty\} $. A 
function $ f $ belongs to the class $ A^p(U) $ if $ f\in H(U) $ 
and all derivatives $ f^{(j)} $, $j\in \{0,1,2,\cdots\}$, $ 0\leq j\leq p $, have continuous 
extensions $ f^{(j)} $ on $ \overline{U} $, where the closure of $ U $ is taken in 
$ \mathbb C $. 
\end{defi}

\begin{defi}
Let $ \Omega $ be the complement in $ \mathbb{C} $ of a compact set and $ p \in \{0,1,2, 
\cdots \}\cup\{\infty\} $. A function $ f $ belongs to the class 
$ A^{p}(\Omega\cup\{\infty\}) $ if $ f\in H(\Omega)\cap C(\Omega\cup\{\infty\}) $ and all 
derivatives $ f^{(j)} $, $j\in \{0,1,2,\cdots\}$, $ 0\leq j\leq p $, have continuous 
extensions $ f^{(j)} $ on $ \overline{\Omega} $, where the closure of $ \Omega $ is taken in 
$\mathbb{C}$. 
\end{defi}

\begin{defi}
Let $ L $ be a compact subset of $ \mathbb{C} $ and $ p\in \{0, 1,2, \cdots\}\cup \{\infty\} $. 
Let also $ \Omega=\mathbb{C}\setminus L $. For $p\neq \infty$ we denote the 
$p$-continuous analytic capacity as $$a_p(L)=\sup\{|\lim_{z\rightarrow\infty}z(f(z)-f(\infty))|: f\in A^p(\Omega\cup\{\infty\}), \max_{0\leq j\leq p}\|f^{(j)}\|_{\infty}\leq 1\}.$$
Obviously, $a_0(L)= a(L) $.\\
For $p=\infty$,
$$a_\infty(L)=\sup\{|\lim_{z\rightarrow\infty}z(f(z)-f(\infty))|: f\in 
A^\infty(\Omega\cup\{\infty\}), d(f,0)\leq 1\},$$
where the Fr{\'e}chet distance $d(f,0)$ is defined by
$$d(f,0)=\sum\limits_{j=0}^\infty 2^{-j}\dfrac{\|f^{(j)}\|_\infty}{1+\|f^{(j)}\|_\infty}$$
If $ L $ is a closed subset of $ \mathbb C $ then we define 
$$a_p(L)=\sup\{ a_p(M) : M\,\,\text{compact subset of}\,\,L\}.$$
\end{defi}

\begin{defi}
Let $ L $ be a compact subset of $ \mathbb{C} $ and $ p\in \{0, 1,2, \cdots\}\cup \{\infty\} $. 
Let also $ \Omega=\mathbb{C}\setminus L $. We denote 
$$a'_p(L)=\sup\{|\lim_{z\rightarrow\infty}z(f(z)-f(\infty))|: f\in A^p(\Omega\cup\{\infty\}), 
\|f\|_{\infty}\leq 1\}.$$
Obviously, $a'_0(L)= a(L) $.\\
If $ L $ is a closed subset of $ \mathbb C $ then we define 
$$a'_p(L)=\sup\{ a'_p(M) : M\,\,\text{compact subset of}\,\,L\}.$$
\end{defi}

\noindent It is obvious that $ a_p(L) $ and $ a'_p(L) $ are decreasing functions of $p$.\\

\noindent The following theorem corresponds to Theorem $3.4$.  

\begin{theorem}
Let $ L $ be a compact subset of $ \mathbb{C} $ and $ U \subset \mathbb{C} $ be open with $ L\subset U $ and let $ p \in \{0,1,2, \cdots \}\cup\{\infty\} $. Then $ a_p(L)=0 $ if and only if every $f \in A^p(U\setminus L) $ has an extension to $ A^p(U) $, if and only if $a'_p(L)=0 $.
\end{theorem}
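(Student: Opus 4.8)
The plan is to prove the two nontrivial implications in a cycle, using the easy inequality $a_p(L)\le a'_p(L)$ to close it. Since every competitor in the definition of $a_p$ (normalized by $\max_{0\le j\le p}\|f^{(j)}\|_\infty\le 1$) also satisfies $\|f\|_\infty\le 1$, we have $a_p(L)\le a'_p(L)$, so $a'_p(L)=0$ immediately gives $a_p(L)=0$. It then suffices to show that $a_p(L)=0$ implies the extension property, and that the extension property implies $a'_p(L)=0$; together with the trivial implication this yields the full chain of equivalences.

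Before the main implications I would record the analogue of the classical fact used in Theorem 3.4: if $a_p(L)=0$ then $A^p(\Omega\cup\{\infty\})$ contains only constants, where $\Omega=\mathbb{C}\setminus L$. Given $g\in A^p(\Omega\cup\{\infty\})$, normalize so that $g(\infty)=0$ and fix an arbitrary $w\in\Omega$; consider $g_w(z)=\frac{g(z)-g(w)}{z-w}$. The point $w$ lies in the open set $\Omega$, so $z-w$ is bounded away from $0$ near $\partial\Omega\subseteq L$, whence $g_w$ is holomorphic on $\Omega$ (the singularity at $w$ is removable) and its derivatives up to order $p$ extend continuously to $\overline{\Omega}$, i.e. $g_w\in A^p(\Omega\cup\{\infty\})$. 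A direct computation gives $\lim_{z\to\infty}z\,g_w(z)=-g(w)$, so $a_p(L)=0$ forces $g(w)=0$. As $w\in\Omega$ is arbitrary, $g\equiv 0$, proving the claim.

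For $a_p(L)=0\Rightarrow$ extension I would mirror the Cauchy decomposition of Theorem 3.4. Given $f\in A^p(U\setminus L)$, choose closed curves $\gamma_1,\gamma_2$ in $U\setminus L$ with $\gamma_1$ surrounding $L$ and $\gamma_2$ surrounding $\gamma_1$, and set $\phi_1,\phi_2$ as the corresponding Cauchy integrals. The function $g$ equal to $\phi_1$ in the exterior of $\gamma_1$ and to $\phi_2-f$ in the interior of $\gamma_2$ is well defined on $\Omega$ with $g(\infty)=0$; the new point relative to Theorem 3.4 is that $g\in A^p(\Omega\cup\{\infty\})$, which follows because near $L$ we have $g=\phi_2-f$ with $\phi_2$ holomorphic in the interior of $\gamma_2$ and the derivatives of $f$ up to order $p$ continuous up to $\partial L$ by membership in $A^p(U\setminus L)$. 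By the lemma $g\equiv 0$, so $f=\phi_2$ on $(\mathrm{int}\,\gamma_2)\setminus L$; since $\phi_2$ is holomorphic on all of $\mathrm{int}\,\gamma_2\supseteq L$, gluing $\phi_2$ inside $\gamma_2$ to $f$ on the rest of $U$ produces $F\in H(U)$ with $F=f$ on $U\setminus L$, and $F\in A^p(U)$ because near $\partial U$ it equals $f$ (hence keeps the boundary regularity) while near $L$ it is genuinely holomorphic.

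For extension $\Rightarrow a'_p(L)=0$, take any $f\in A^p(\Omega\cup\{\infty\})$ with $\|f\|_\infty\le 1$. Since $L\subset U$ forces $\partial U\subseteq\Omega$, $f$ is holomorphic in a neighborhood of $\partial U$ and its restriction lies in $A^p(U\setminus L)$; by hypothesis it extends to some $F\in A^p(U)$. Gluing $F$ on $U$ to $f$ on $\Omega$ (they agree on $U\setminus L=U\cap\Omega$) yields a function holomorphic on $U\cup\Omega=\mathbb{C}$, bounded, and continuous at $\infty$, hence constant by Liouville. Therefore $\lim_{z\to\infty}z(f(z)-f(\infty))=0$ for every competitor, so $a'_p(L)=0$. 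The main obstacle throughout is not the algebra but the boundary bookkeeping: one must check at each step that the functions produced genuinely lie in the relevant $A^p$ class, i.e. that their derivatives up to order $p$ extend continuously to the boundary—this is exactly where the hypothesis $f\in A^p(U\setminus L)$ (as opposed to mere continuity, as in Theorem 3.4) is used, and where care is needed near $\partial L$.
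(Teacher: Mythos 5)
Your proof is correct and is essentially the paper's own: the paper proves this theorem with the single sentence that it ``is a repetition of the proof of Theorem 3.4,'' and your two-curve Cauchy decomposition (for $a_p(L)=0\Rightarrow$ extension) together with the glue-and-Liouville argument (for extension $\Rightarrow a'_p(L)=0$) is exactly that repetition, with the trivial inequality $a_p(L)\le a'_p(L)$ closing the three-way cycle. The one ingredient you supply that the paper leaves implicit is the difference-quotient lemma showing that $a_p(L)=0$ forces $A^p(\Omega\cup\{\infty\})$ to consist only of constants --- the $p$-analogue of the fact the paper quotes as well known for $p=0$ --- and your proof of it (via $g_w(z)=\frac{g(z)-g(w)}{z-w}$ and boundary bookkeeping near $\partial L$) is correct.
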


The proof is a repetition of the proof of Theorem $3.4$.\\

\begin{defi}
Let $ L $ be a compact subset of $ \mathbb{C} $. For $ p \in \{0,1,2, \cdots\}\cup \{\infty\} $ we define $b_p(L)$ such that $b_p(L)=0$ when $ a_p(L)=0 $ and $ b_p(L)=\infty $ when $ a_p(L)\neq 0 $. For $ p, q \in \{0,1,2, \cdots\}\cup \{\infty\} $ we will say that $ a_p(L) $ and $a_q(L)$ are essentially different if $ b_p(L)\neq b_q(L)$.
\end{defi}

\begin{defi}
Let $ U $ be an open subset of $ \mathbb C $ and $ p \in \{1,2, \cdots \}\cup\{\infty\} $. A 
function $ f $ belongs to the class $ \tilde A^p(U) $ if $ f\in 
A^p(U) $ and for $ 0\leq j\leq j'\leq p $ the following is true for $ z,w\in\overline U $:
\begin{equation}
f^{(j)}(w)-\sum_{k=0}^{j'-j}\frac 1{k!}f^{(j+k)}(z)(w-z)^k=o(|w-z|^{j'-j})\qquad\text{as}\,\, w\to z.
\end{equation}
This is supposed to hold uniformly for $ z,w $ in compact subsets of $ \overline U $.\\ 
Analogously, if $ \Omega $ is the complement in $ \mathbb{C} $ of a compact set, then a function $ f $ belongs to the class 
$ \tilde A^{p}(\Omega\cup\{\infty\}) $ if $ f\in A^{p}(\Omega\cup\{\infty\}) $ and $(3)$ holds for $ z,w\in\overline \Omega $.
\end{defi}

Note that, since $ f\in H(U) $, relation $(3)$ is automatically true for $ z\in U $ and thus the 
''point'' of the definition is when $ z\in\partial U $.

If $ p $ is finite, then $ f\in\tilde A^p(U) $ admits as a norm $ \|f\|_{\tilde A^p(U)} $ the 
smallest $ M $ such that
$$|f^{(j)}(z)|\leq M\qquad\text{for}\,\,z\in\overline U, 0\leq j\leq p,$$
\begin{equation}
\begin{split}
\Big|f^{(j)}(w)-\sum_{k=0}^{j'-j}\frac 1{k!}f^{(j+k)}(z)(w-z)^k\Big|\leq M|w-z|^{j'-j}\quad& 
w,z\in\overline U, |w-z|\leq 1,\notag\\
&0\leq j\leq j'\leq p.
\end{split}
\end{equation}

It is easy to see that $ \tilde A^p(U) $ with this norm is complete.

If $ p $ is infinite, then using the norms for the finite cases in the standard way, $ \tilde 
A^\infty(U) $ becomes a Fr{\'e}chet space.

There is a fundamental result of Whitney (\cite{[12]}) saying that if $ f\in\tilde A^p(U) $, 
then $ f $ can be extended in $ \mathbb C $ in such a way that the 
extended $ f $ belongs to $ C^p(\mathbb C) $ and that the partial derivatives of $ f $ of order 
$ \leq p $ in $ \mathbb C $ are extensions of the original 
partial derivatives of $ f $ in $ U $.    

\begin{defi}
Let $ L $ be a compact subset of $ \mathbb{C} $ and $ p\in \{0, 1,2, \cdots\}\cup \{\infty\} $. 
Let also $ \Omega=\mathbb{C}\setminus L $. For $p\neq \infty$, we denote
$$\tilde a_p(L)=\sup\{|\lim_{z\rightarrow\infty}z(f(z)-f(\infty))|: f\in\tilde 
A^p(\Omega\cup\{\infty\}), \|f\|_{\tilde A^p(U)}\leq 1\}.$$
In the case $ p=\infty $ the norm $ \|f\|_{\tilde A^p(U)} $ is replaced by the distance from $ f 
$ to $ 0 $ in the metric space structure of $ \tilde A^\infty(U) $.\\
Obviously, $ \tilde a_0(L)= a(L) $.\\
If $ L $ is a closed subset of $ \mathbb C $ then we define 
$$\tilde a_p(L)=\sup\{ \tilde a_p(M) : M\,\,\text{compact subset of}\,\,L\}.$$
\end{defi}

It turns out that in the case $ p\geq 1 $ there is a simple topological characterization of the compact sets $ L $ with $ \tilde a_p(L)=0 $.
\begin{theorem}
Let $ L $ be a compact subset of $ \mathbb{C} $ and $ p\geq 1 $. Then $ \tilde a_p(L)=0 $ if and only if $ L $ has empty interior.
\end{theorem}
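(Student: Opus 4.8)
The plan is to prove the two implications separately, the whole argument resting on Whitney's extension theorem (quoted just before Definition 3.16) together with the elementary fact that a $C^1$ function which is holomorphic on a dense open set is automatically holomorphic everywhere. This last fact is where the hypothesis $p\geq 1$ enters, and it is the conceptual heart of the statement.

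For the implication ``$L$ has empty interior $\Rightarrow \tilde a_p(L)=0$'', write $\Omega=\mathbb C\setminus L$ and fix an arbitrary $f\in\tilde A^p(\Omega\cup\{\infty\})$ with $p\geq 1$. Since $f\in\tilde A^p(\Omega)$, Whitney's theorem produces an extension $\tilde f\in C^p(\mathbb C)\subset C^1(\mathbb C)$ with $\tilde f=f$ on $\overline\Omega$. On the open set $\Omega$ we have $\tilde f=f\in H(\Omega)$, so $\partial\tilde f/\partial\overline z=0$ on $\Omega$. Because $L$ has empty interior, $\Omega$ is dense in $\mathbb C$, and $\partial\tilde f/\partial\overline z$ is continuous (as $\tilde f\in C^1$); hence $\partial\tilde f/\partial\overline z\equiv 0$ on all of $\mathbb C$ and $\tilde f$ is entire. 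Since $\tilde f=f$ on a neighbourhood of $\infty$ and $f$ has the finite limit $f(\infty)$ there, $\tilde f$ is a bounded entire function, thus constant by Liouville's theorem. Consequently $f\equiv f(\infty)$ on $\Omega$ and $\lim_{z\to\infty}z(f(z)-f(\infty))=0$. As $f$ was arbitrary, $\tilde a_p(L)=0$; the case $p=\infty$ follows by applying the above to $p=1$, since $\tilde A^\infty\subset\tilde A^1$.

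For the converse ``$L$ has nonempty interior $\Rightarrow \tilde a_p(L)>0$'', choose a disc $D(z_0,r)\subset L$. Then $\overline\Omega\subset\mathbb C\setminus D(z_0,r)$, so $z_0\notin\overline\Omega$, and the function $f_0(z)=1/(z-z_0)$ is holomorphic on the neighbourhood $\mathbb C\setminus\overline{D(z_0,r/2)}$ of $\overline\Omega$, together with all its derivatives, and extends holomorphically across $\infty$ with $f_0(\infty)=0$. Being holomorphic on a neighbourhood of $\overline\Omega$, $f_0$ satisfies the Whitney conditions (3) automatically (they reduce to the holomorphic Taylor estimate, uniformly on compact subsets), so $f_0\in\tilde A^p(\Omega\cup\{\infty\})$ with finite norm $\|f_0\|_{\tilde A^p}$. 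Finally $\lim_{z\to\infty}z(f_0(z)-f_0(\infty))=\lim_{z\to\infty}z/(z-z_0)=1\neq 0$. Normalising — dividing $f_0$ by $\|f_0\|_{\tilde A^p}$ when $p<\infty$, or multiplying by a small $\lambda>0$ so that $d(\lambda f_0,0)\leq 1$ when $p=\infty$ — yields an admissible function with nonzero value, whence $\tilde a_p(L)>0$.

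The hard part lies entirely in the first implication, and it is exactly there that $p\geq 1$ is indispensable: for $p=0$ one only controls $f$ in the sup-norm, Whitney's theorem merely returns a continuous extension, and there is no mechanism to propagate holomorphy from the dense set $\Omega$ across $L$ (this is precisely why $\tilde a_0=a$ can be positive on sets of empty interior). Thus the two points to set up carefully are (i) that Whitney's extension theorem applies to $f\in\tilde A^p(\Omega\cup\{\infty\})$ — if one prefers to avoid the unbounded domain, one localises by restricting $f$ to $V\cap\Omega$ for a large disc $V\supset L$, extends there, and glues with $f$ on $\Omega$ — and (ii) that $\partial\tilde f/\partial\overline z$, being continuous and vanishing on the dense set $\Omega$, vanishes identically. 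Both are routine once the framework is in place.
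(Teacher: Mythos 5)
Your proposal is correct, and it shares the paper's skeleton (the witness $1/(z-z_0)$ when $L$ has interior; Liouville plus continuity at $\infty$ for the converse), but the mechanism you use to push holomorphy across $L$ is genuinely different from the paper's. The paper never invokes Whitney's extension theorem in this proof: it observes that, since $L$ has empty interior, $\overline{\Omega}=\mathbb{C}$, so the defining condition (3) with $j=0$, $j'=1$ at a point $z\in L$ reads
$$f(w)-f(z)-f'(z)(w-z)=o(|w-z|)\qquad\text{as } w\to z,\ w\in\overline{\Omega}=\mathbb{C},$$
which is literally complex differentiability of $f$ at $z$. Hence $f$ is complex-differentiable at every point of $\mathbb{C}$, so it is entire (Goursat; in fact here the derivative $f'$ is even continuous), and constancy follows at once. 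You instead route through Whitney's theorem to conclude $\tilde f\in C^1(\mathbb{C})$ — note that in this situation the ``extension'' is $f$ itself, since the closed set $\overline{\Omega}$ is all of $\mathbb{C}$, so what Whitney really supplies is the $C^1$ regularity of $f$ — and then you kill $\partial\tilde f/\partial\overline z$ by continuity together with density of $\Omega$, finishing with the theorem that a $C^1$ function satisfying the Cauchy--Riemann equations is holomorphic. Both arguments are sound and both isolate correctly where $p\geq 1$ is needed; the paper's route is lighter, using only the $j'=1$ instance of the definition and no extension theorem or density-of-$\bar\partial$ argument, while yours runs parallel to the paper's convolution proof of Theorem 3.19 and would adapt more readily to situations where one only controls $f$ through a smooth global representative. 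Your handling of the easy direction matches the paper's one-line argument, with the normalization of $f_0$ (which the paper leaves implicit) spelled out correctly.
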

\begin{proof}
Let $ L $ have nonempty interior and let the disc $ D(z_0,r_0) $ be contained in $ L $. Obviously, the non-constant function $ \frac 1{z-z_0} $ belongs to $ 
\tilde A^p(\Omega\cup\{\infty\}) $ for all $ p $ and thus $ a_p(L)>0 $ for all $ p $.

Conversely, let $ L $ have empty interior and let $ f $ belong to $ \tilde A^1(\Omega\cup\{\infty\}) $. Then $ f $ is analytic in $ \Omega $ and at every $ 
z\in L $ we have
$$f(w)-f(z)-f'(z)(w-z)=o(|w-z|)\qquad\text{as}\,\,w\to z, w\in\overline\Omega=\mathbb C.$$
Thus $ f $ is analytic at $ z $ (with derivative equal to $ f'(z) $) and hence analytic in all of $ \mathbb C $. Since $ f $ is continuous at $ \infty $, it is 
a constant. Therefore $ \tilde A^1(\Omega\cup\{\infty\}) $ contains only the constant functions and $ \tilde a_1(L)=0 $. 
\end{proof}

\begin{theorem}
There is a compact subset $ L $ of $ \mathbb{C} $ such that $ \tilde a_1(L)=0<\tilde a_0(L) $.
\end{theorem}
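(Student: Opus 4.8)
The plan is to exploit the dichotomy already isolated: by Theorem 3.17 we have $\tilde a_1(L)=0$ precisely when $L$ has empty interior, while by Definition 3.16 we have $\tilde a_0(L)=a(L)$. Hence it suffices to produce a compact set $L\subset\mathbb C$ that simultaneously has empty interior and satisfies $a(L)>0$. I would take $L=C\times C$, where $C\subset[0,1]$ is a fat Cantor set (a Smith--Volterra--Cantor set) of positive one-dimensional Lebesgue measure. Then $L$ is compact, its planar measure is $m(L)=|C|^2>0$, and, since $C$ is nowhere dense, $L$ contains no coordinate box and therefore has empty interior. This immediately gives $\tilde a_1(L)=0$.

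It remains to show $a(L)>0$, and for this I would construct an explicit competitor for the supremum in Definition 3.3 via the Cauchy transform of $L$. Set
$$f(z)=\iint_L\frac{dm(w)}{z-w},\qquad z\in\mathbb C.$$
Since $1/|z-w|$ is locally integrable in the plane, $f$ is well defined for every $z$, and for $z\notin L$ one may differentiate under the integral, so $f\in H(\Omega)$ with $\Omega=\mathbb C\setminus L$. Clearly $f(z)\to 0$ as $z\to\infty$, so $f(\infty)=0$, and by dominated convergence
$$\lim_{z\to\infty}z\big(f(z)-f(\infty)\big)=\lim_{z\to\infty}\iint_L\frac{z}{z-w}\,dm(w)=\iint_L 1\,dm(w)=m(L)>0.$$
Moreover $|f(z)|\le\iint_L|z-w|^{-1}\,dm(w)$ is bounded uniformly in $z$ (the integral over the bounded set $L$ of the locally integrable kernel is bounded, and it tends to $0$ at infinity), so $\|f\|_\infty<\infty$. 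Normalizing $f$ by $\|f\|_\infty$ then yields a competitor showing $a(L)\ge m(L)/\|f\|_\infty>0$, provided $f\in A(\Omega\cup\{\infty\})$.

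The main obstacle is exactly the membership $f\in A(\Omega\cup\{\infty\})$, i.e. the claim that $f$, though defined by an integral, extends continuously across $L$; since $L$ has empty interior, $\overline\Omega=\mathbb C$, so what must be checked is that $f$ is continuous at every point of $\mathbb C$, including points of $L$. I would prove this by the standard estimate for the Cauchy transform of a bounded, compactly supported density: writing
$$f(z)-f(z_0)=\iint_L\Big(\frac1{z-w}-\frac1{z_0-w}\Big)\,dm(w),$$
one splits $L$ into its intersection with a small disc $D(z_0,\varepsilon)$ and its complement. On the far part the integrand converges uniformly as $z\to z_0$, while on the near part one bounds each term separately using $\iint_{D(z_0,\varepsilon)}|z-w|^{-1}\,dm(w)\le\iint_{D(z,2\varepsilon)}|z-w|^{-1}\,dm(w)=C\varepsilon$ for $z$ close to $z_0$, and similarly for the $z_0$ term; letting $\varepsilon\to0$ gives continuity. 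Granting this, $f$ is continuous on $\mathbb C\cup\{\infty\}$ and holomorphic off $L$, hence $f\in A(\Omega\cup\{\infty\})$, and the computation above gives $a(L)=\tilde a_0(L)>0$ while $\tilde a_1(L)=0$, as required.
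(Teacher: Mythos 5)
Your proof is correct and follows essentially the same route as the paper: both reduce, via the characterization of Theorem 3.16 ($\tilde a_1(L)=0$ if and only if $L$ has empty interior), to exhibiting a compact set with empty interior and positive area (the paper's corner-squares Cantor set with $\sum_{n}(1-2a_n)<\infty$ is precisely a product of fat Cantor sets, like your $C\times C$), and both then use the Cauchy transform $\iint_L\frac{dm(w)}{z-w}$ as a nonconstant element of $A(\Omega\cup\{\infty\})$ witnessing $a(L)>0$. The only difference is that the paper cites Garnett for the continuity of this transform across $L$, whereas you prove it directly by the standard near/far splitting, which is a valid (and self-contained) substitute.
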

\begin{proof}
Due to the last theorem, it is enough to find a compact $ L $ with empty interior and with $ \tilde a_0(L)=a(L)>0 $.

This set $ L $ is a Cantor type set. We consider a sequence $(a_n)$ with $ 0<a_n<\frac 12 $ for every $ n=1,2,3,\ldots $ and construct a sequence $ (L_n) $ of 
decreasing compact sets as follows. $L_0$ is the unit square $ [0,1]\times[0,1] $ and $ L_1 $ is the union of the four squares at the four corners of $ L_0 $ 
with sidelength equal to $a_1$. We then continue inductively. If $ L_n $ is the union of $ 4^n $ squares each of sidelength equal to 
$$l_n=a_1\cdots a_n,$$ 
then each of these squares produces four squares at its four corners each of sidelength equal to $ a_1\cdots a_na_{n+1} $. The union of these new squares is $ 
L_{n+1} $.

We denote $ I_{n,k} $, $ k=1,\ldots,4^n $, the squares whose union is $ L_n $.

Finally, we define
$$L=\bigcap_{n=1}^{+\infty}L_n.$$

It is clear that $ L $ is a totally disconnected compact set. The area of $ L_n $ equals
$$|L_n|=4^n(a_1\cdots a_n)^2=(2a_1\cdots 2a_n)^2.$$

Now we assume that
$$\sum_{n=1}^{+\infty}(1-2a_n)<+\infty.$$

Under this condition we find that the area of $ L $ equals
$$|L|=\lim_{n\to+\infty}|L_n|=\lim_{n\to+\infty}(2a_1\cdots 2a_n)^2>0.$$

Then it is well known (\cite{[7]}) that the function
$$f(z)=\frac 1{\pi}\iint_L\frac 1{z-w}\,dm(w)$$
is continuous in $ \mathbb C\cup\{\infty\} $ with $ f(\infty)=0 $ and holomorphic on $ \mathbb C\setminus L $. Since
$$\lim_{z\to\infty}zf(z)=|L|>0,$$
$ f $ is not identically equal to $ 0 $ and hence $\tilde a_0(L)=a(L)>0$.
\end{proof}

\begin{remark}
The latter part of the above proof shows that if a compact set $ L $ (not necessarily of Cantor type) has strictly positive area, then $ a(L)>0 $, which is a well known fact (\cite{[7]}). 
\end{remark}

The problem of the characterization of the compact sets $ L $ with $ a_p(L)=0 $ seems to be more complicated.

We will show that there is a compact set $ L $ such that $a_0(L)$ and $a_1(L)$ are essentially different; that is $a_0(L)>0$ and $a_1(L)=0$.

\begin{theorem}
There is a compact subset $ L $ of $ \mathbb{C} $ such that $ a_1(L)=0<a_0(L) $.
\end{theorem}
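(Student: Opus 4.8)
The plan is to take $L=K\times K$, where $K\subset[0,1]$ is the ternary Cantor set, exactly the example announced in the introduction. This $L$ is compact, totally disconnected, has empty interior and zero planar Lebesgue measure (indeed, in the obvious inductive construction the areas satisfy $|L_n|=(4/9)^n\to 0$), so the cheap criterion of Remark 3.18 does not apply and the positivity of $a_0(L)$ must come from the dimension of $L$. Since $K$ is self-similar under two maps of ratio $1/3$ satisfying the open set condition, $K\times K$ is self-similar under four maps of ratio $1/3$, again satisfying the open set condition, so $\dim_H L=\log 4/\log 3>1$ and $0<\mathcal H^{s}(L)<\infty$ for $s=\log4/\log3$. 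I will use that this dimension exceeds $1$ to produce a nonconstant continuous analytic function, and I will use the zero area together with the product structure to kill all of $A^1$.

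For $a_0(L)=a(L)>0$ I would argue as follows. By Frostman's lemma, since $\mathcal H^{s}(L)>0$ with $s>1$, there is a nonzero positive measure $\mu$ supported in $L$ with $\mu(D(z,r))\le r^{s}$ for all $z$ and $r>0$. Set $f(z)=\frac1\pi\int_L\frac{d\mu(w)}{z-w}$. This $f$ is holomorphic on $\Omega=\mathbb C\setminus L$, satisfies $f(\infty)=0$ and $\lim_{z\to\infty}zf(z)=\frac1\pi\mu(L)\ne0$, so it is nonconstant. The key point is that the growth bound with $s>1$ forces $f$ to be Hölder continuous on all of $\mathbb C$, via the standard estimate $|f(z_1)-f(z_2)|=|z_1-z_2|\big|\frac1\pi\int_L\frac{d\mu(w)}{(z_1-w)(z_2-w)}\big|\le C|z_1-z_2|^{s-1}$, obtained by splitting the integral over the dyadic annuli centred at $z_1$ and summing the convergent geometric series $\sum_j 2^{j(s-2)}$. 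Since $L$ has empty interior, $\overline\Omega=\mathbb C$, so $f\in A(\Omega\cup\{\infty\})$ is a nonconstant element and, by the criterion following Definition 3.3, $a(L)=a_0(L)>0$.

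For $a_1(L)=0$ it suffices, by Definition 3.10, to show that every $f\in A^1(\Omega\cup\{\infty\})$ is constant, for then the defining supremum vanishes. Let such an $f$ be given; then $f$ is continuous on $\overline\Omega=\mathbb C$, holomorphic on $\mathbb C\setminus L$, and $f'$ extends to a continuous function $g$ on $\mathbb C$. Here the product structure enters: since $|K|=0$, every horizontal line $\{\operatorname{Im}z=c\}$ with $c\notin K$ and every vertical line $\{\operatorname{Re}z=c\}$ with $c\notin K$ misses $L$ entirely, and these constitute almost every axis-parallel line. On such a line $f$ is holomorphic, hence absolutely continuous, and its classical partial derivatives equal $g$ and $ig$ off $L$, i.e. almost everywhere, while $g$ is continuous and so locally integrable. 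By the Nikodym (ACL) characterization of Sobolev spaces, $f\in W^{1,1}_{\mathrm{loc}}(\mathbb C)$ and its distributional derivatives agree a.e. with the classical ones, whence $\bar\partial f=\tfrac12(\partial_x+i\partial_y)f=\tfrac12(g+i\cdot ig)=0$ almost everywhere, i.e. $\bar\partial f=0$ in the sense of distributions. By Weyl's lemma $f$ is holomorphic on all of $\mathbb C$, hence entire, and being continuous at $\infty$ it is bounded, hence constant. Thus $a_1(L)=0$, and combining with the previous paragraph we obtain $a_1(L)=0<a_0(L)$.

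The hard part will be the removability step $a_1(L)=0$. The naive attempt, recovering $f$ on $L$ by integrating its continuous derivative along segments, fails precisely because of the Cantor function pathology, namely that continuity together with an almost-everywhere derivative does not yield absolute continuity, so zero area alone is not obviously enough. The decisive observation is that for the product set $K\times K$ almost every axis-parallel line avoids $L$ completely, so one never has to integrate along a bad line; this is exactly what makes the ACL/Weyl argument go through and is the reason the example is built as a product. A secondary technical point, in the step $a_0(L)>0$, is the continuity of the Cauchy transform across $L$, which is where the hypothesis $\dim_H L>1$ is genuinely used.
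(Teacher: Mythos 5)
Your proof is correct, but it is not the paper's proof of this theorem (Theorem 3.19); it is, in essence, a reworking of the \emph{second} proof that the paper gives immediately afterwards (Theorem 3.20, the $K_{1/3}\times K_{1/3}$ example). For Theorem 3.19 itself the paper uses a different set altogether: the positive-area Cantor set constructed in the proof of Theorem 3.17 ($4^n$ corner squares of side $l_n=a_1\cdots a_n$ with $\sum_n(1-2a_n)<\infty$). There $a_0(L)>0$ comes for free, since positive area makes the Cauchy transform of area measure a nonconstant element of $A(\Omega\cup\{\infty\})$ (Remark 3.18), and $a_1(L)=0$ is proved by an elementary Cauchy-integral estimate: one writes $f(z_0)$, via Cauchy's formula, as a sum of integrals over the $4^n$ generation-$n$ boundary squares, subtracts in each the first-order Taylor polynomial of $f$ at an interior point (its integral against $(z-z_0)^{-1}$ vanishes because $z_0$ lies outside the square), and uses uniform continuity of $f'$ to bound each term by a constant times $\epsilon_n l_n^2$, so that summing gives $|f(z_0)|\le \tfrac{8|L_n|}{\pi\delta_0}\epsilon_n\to 0$. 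Comparing the two routes: the paper's is more elementary (no Frostman measures, no Sobolev theory, no distributions) and produces a strictly more striking example, namely a set of \emph{positive area} with $a_1=0$, a phenomenon your zero-area example cannot exhibit. Your route buys self-containedness and generality: your Frostman-plus-H\"older argument actually proves $a_0(K_{1/3}\times K_{1/3})>0$, where the paper merely cites Garnett and Zalcman — though as written your dyadic estimate is compressed, since annuli centred at $z_1$ alone do not control the singularity at $z_2$; one needs the inner sums $\sum_j 2^{-j(s-1)}$ (convergent because $s>1$) around both points as well as the outer sum $\sum_j 2^{j(s-2)}$ (convergent because $s<2$). Your removability step rests on exactly the geometric observation of Theorem 3.20 — almost every axis-parallel line misses the product set — but you package it through the Nikodym ACL characterization of $W^{1,1}_{\mathrm{loc}}$ plus Weyl's lemma for $\bar\partial$, whereas the paper argues by hand: it mollifies, proves $\frac{\partial (u*\varphi_\varepsilon)}{\partial x}=\frac{\partial u}{\partial x}*\varphi_\varepsilon$ via the mean value theorem along segments avoiding $L$ and dominated convergence, deduces the Cauchy--Riemann equations for $f*\varphi_\varepsilon$ everywhere by continuity and density of $\Omega$, and finishes with Weierstrass's theorem. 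Both of your steps are sound, so the proposal stands as a valid proof; it simply travels through the example and idea of Theorem 3.20 rather than the paper's own proof of Theorem 3.19, at the cost of heavier (standard) machinery and of the positive-area strengthening.
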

\begin{proof}
We consider the same Cantor type set $ L $ which appeared in the proof of the previous theorem. We keep the same notations.

We now take any $ f $ which belongs to $ A^1(\Omega\cup\{\infty\}) $. Subtracting $ f(\infty) $ from $ f $, we may also assume that $ f(\infty)=0 $.

Let $ z_0\in\Omega $. Then there is $ n_0 $ such that $ z_0\notin L_n $ for all $ n\geq n_0 $.

By Cauchy's formula, for every $ n\geq n_0 $ we have
\begin{equation}
f(z_0)=-\frac 1{2\pi i}\sum_{k=1}^{4^n}\int_{\gamma_{n,k}}\frac{f(z)}{z-z_0}\,dz 
\end{equation}
where $ \gamma_{n,k} $ is the boundary curve of the square $ I_{n,k} $.

Let $ z_{n,k} $ be any point of $ \Omega $ inside $ I_{n,k} $ (for example, the center of the square). It is geometrically obvious that for every $ 
z\in\gamma_{n,k} $ there is a path (consisting of at most two line segments) $ \gamma $ with length $ l(\gamma)\leq 2l_n $ joining $ z $ and $ z_{n,k} $ and 
contained in $ \Omega $ (with the only exception of its endpoint $ z $). Since $ f\in A^1(\Omega\cup\{\infty\}) $, we get
\begin{equation}
\begin{split}
|f(z)-f(z_{n,k})-&f'(z_{n,k})(z-z_{n,k})|=\Big|\int_{\gamma}(f'(\zeta)-f'(z_{n,k}))\,d\zeta\Big|\notag\\
&\leq\int_{\gamma}|f'(\zeta)-f'(z_{n,k})|\,d\zeta\leq\epsilon_nl(\gamma)\leq 2\epsilon_n l_n,
\end{split}
\end{equation}
where $ \epsilon_n\to 0 $ uniformly for $ z\in\gamma_{n,k} $ and for $ k=1,\ldots,4^n $.

Therefore
$$\Big|\int_{\gamma_{n,k}}\frac{f(z)}{z-z_0}\,dz\Big|=\Big|\int_{\gamma_{n,k}}\frac{f(z)-f(z_{n,k})-f'(z_{n,k})(z-z_{n,k})}{z-z_0}\,dz\Big|
\leq \epsilon_n\frac{8l_n^2}{\delta_0}$$
where $ \delta_0 $ is the distance of $ z_0 $ from $ L_{n_0} $.

Thus from $(4)$ we obtain
$$|f(z_0)|\leq\frac{8\cdot 4^nl_n^2}{\pi\delta_0}\,\epsilon_n=\frac{8|L_n|}{\pi\delta_0}\,\epsilon_n\leq\frac 8{\pi\delta_0}\,\epsilon_n.$$
This holds for all $ n\geq n_0 $ and hence $ f(z_0)=0 $ for all $ z_0\in\Omega $.

We proved that the only element $ f $ of $ A^1(\Omega\cup\{\infty\}) $ with $f(\infty)=0 $ is the zero function and thus $ a_1(L)=0 $.
\end{proof}

We will now see a different proof of the above theorem. The proof is longer from the previous one, but it provides a more general result.

\begin{theorem}
Let $K_{1/3}$ be the usual Cantor set lying on $[0,1]$ and $L=K_{1/3}\times K_{1/3}$. Then 
$a_0(L)>0$, but $a_1(L)=0$.
\end{theorem}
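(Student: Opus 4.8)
The statement has two halves, and the half $a_1(L)=0$ requires essentially no new work. The set $L=K_{1/3}\times K_{1/3}$ is exactly the Cantor set produced by the construction of the previous theorem when one takes $a_n=\tfrac13$ for every $n$, so that $L_n$ is the union of $4^n$ squares $I_{n,k}$ of side $l_n=3^{-n}$. Hence the plan is to repeat verbatim the Cauchy-integral argument of the previous proof: given $f\in A^1(\Omega\cup\{\infty\})$ with $f(\infty)=0$ and $z_0\in\Omega$, write $f(z_0)$ as a sum of $4^n$ contour integrals over the boundary curves $\gamma_{n,k}$, subtract on each square the first-order Taylor polynomial of $f$ at an interior point $z_{n,k}$, and use the defining $o(|w-z|)$ property of $A^1$ to bound each integral. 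This yields $|f(z_0)|\le \frac{8\cdot 4^n l_n^2}{\pi\delta_0}\,\epsilon_n=\frac{8|L_n|}{\pi\delta_0}\,\epsilon_n$ with $\epsilon_n\to0$; since here $|L_n|=(4/9)^n\le1$, letting $n\to\infty$ forces $f(z_0)=0$. Thus $A^1(\Omega\cup\{\infty\})$ contains only constants and $a_1(L)=0$.

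The substantial half is $a_0(L)>0$, and here the earlier device fails because $L$ has zero area, so one cannot simply take the Cauchy transform of the area measure restricted to $L$. The plan is to exploit that $L$ is self-similar under the four corner contractions of ratio $\tfrac13$, so its Hausdorff dimension is $s=\log4/\log3>1$. I would equip $L$ with its natural self-similar probability measure $\mu$, normalized by $\mu(I_{n,k})=4^{-n}$, and first establish the growth estimate $\mu(D(z,r))\le C r^{s}$. This is the geometric core of the argument: the $4^n$ squares of level $n$ have side $3^{-n}$ and any two distinct ones are separated by a gap of size at least $3^{-n}$, so a disc of radius $r$ with $3^{-(n+1)}\le r<3^{-n}$ meets only boundedly many level-$n$ squares, each of mass $4^{-n}=(3^{-n})^{s}$, giving $\mu(D(z,r))\le C r^{s}$.

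Granting the growth estimate, I would form the Cauchy transform $f(z)=\int_L \frac{d\mu(w)}{z-w}$ and verify it lies in $A(\Omega\cup\{\infty\})$ and is non-constant. Absolute convergence and boundedness follow from a dyadic-shell summation $\int_L\frac{d\mu(w)}{|z-w|}\le \sum_{j\ge0}2^{j+1}\mu(D(z,2^{-j}))\le C\sum_{j\ge0}2^{j(1-s)}<\infty$, which converges precisely because $s>1$. The same splitting of $L$ into a near part $\{|z_0-w|\le 2|z-z_0|\}$ and a far part gives Hölder continuity $|f(z)-f(z_0)|\le C|z-z_0|^{s-1}$, so $f\in C(\mathbb{C}\cup\{\infty\})$ with $f(\infty)=0$; holomorphy off $L$ is immediate. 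Finally $\lim_{z\to\infty}z f(z)=\int_L d\mu=\mu(L)=1>0$, so $f$ is non-constant, and after dividing by $\|f\|_\infty<\infty$ it is an admissible competitor showing $a_0(L)=a(L)>0$.

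The main obstacle is precisely this continuity of the Cauchy transform of $\mu$: boundedness alone would only express positivity of capacity in the Ahlfors sense, whereas membership in $A(\Omega\cup\{\infty\})$, hence positivity of the \emph{continuous} analytic capacity $a_0$, needs genuine continuity up to and across $L$. The strict inequality $s>1$ is used twice and in an essential way, once for the convergence of the dyadic sum (boundedness) and once for the positive Hölder exponent $s-1$ (continuity); this is exactly the feature that makes the present argument more general than, and independent of, the positive-area hypothesis used in the previous theorem.
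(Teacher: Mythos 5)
Your proof is correct, but on both halves it follows a genuinely different route from the paper. For $a_1(L)=0$ the paper does \emph{not} recycle the Cauchy-integral argument of the preceding theorem; it gives an independent, longer proof: it mollifies $f\in A^1(\Omega\cup\{\infty\})$ by convolution with $\varphi_\varepsilon=\varepsilon^{-2}\chi_\varepsilon$, uses the mean value theorem and dominated convergence (valid because almost every horizontal and almost every vertical line misses $L$) to show that each $f*\varphi_\varepsilon$ satisfies the Cauchy--Riemann equations on $\Omega$, hence by density of $\Omega$ and continuity of the partials on all of $\mathbb{C}$, so each $f*\varphi_\varepsilon$ is entire; Weierstrass's theorem then makes $f$ entire, hence constant. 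Your shortcut --- observing that $L$ is the case $a_n\equiv\frac13$ of the earlier Cantor construction and that the positive-area hypothesis $\sum(1-2a_n)<\infty$ was used only in the $a_0$ half there --- is legitimate and shorter; what the paper's route buys is the generality recorded in the remark that follows it (any compact $L$ of zero area missed by almost every horizontal and vertical line has $a_1(L)=0$). One caution on your wording: the $o(|w-z|)$ Taylor estimate is the \emph{definition} of $\tilde A^1$, not of $A^1$; in the argument you are importing, that estimate for $A^1$-functions is earned by integrating $f'(\zeta)-f'(z_{n,k})$ along short polygonal paths lying in $\Omega$, and this is exactly where the gap structure of the Cantor squares is needed (it does persist for $a_n=\frac13$), so calling it definitional conflates $A^1$ with $\tilde A^1$ --- a distinction the paper is at pains to keep. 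For $a_0(L)>0$ the paper constructs nothing: it notes that $|L|=0$ (so the area-measure Cauchy transform of the previous theorem is useless) and cites Garnett and Zalcman for the existence of a nonconstant function continuous on $S^2$ and holomorphic off $L$. Your argument --- the Frostman estimate $\mu(D(z,r))\le Cr^s$ with $s=\log 4/\log 3>1$ for the natural self-similar measure (using that distinct level-$n$ squares are separated by $3^{-n}$), then boundedness and $(s-1)$-H\"older continuity of the Cauchy transform $\int_L\frac{d\mu(w)}{z-w}$ via dyadic shells, and finally $\lim_{z\to\infty}zf(z)=\mu(L)=1\neq 0$ --- is precisely the standard proof behind those citations, and it is correctly executed: the H\"older continuity, not mere boundedness, is what puts $f$ in $A(\Omega\cup\{\infty\})$ and yields \emph{continuous} analytic capacity. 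What you buy is a self-contained proof that makes visible the two distinct uses of $s>1$; what the paper buys is brevity by outsourcing this half to the literature.
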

\begin{proof}
It is known (\cite{[13]}, \cite{[14]}) that
there exists a function $g$ continuous on $S^2$ and holomorphic off $L$, such that 
$$g'(\infty)=\lim_{z\to\infty}z(g(z)-g(\infty))\neq 0,$$ which implies that $a_0(L)>0$. 

For the second statement we first observe that the area of
$L$ is $0$, as $$L=\bigcap_{n=0}^\infty L_n,$$ where each $L_n$ is the union of $4^n$ squares 
of area $9^{-n}$. 
We will prove that $a_1(L)=0$ or equivalently that every function in 
$A^1(\Omega\cup\{\infty\})$ is entire, where $\Omega=\mathbb{C}\setminus L$. Let  
$f\in A^1(\Omega\cup\{\infty\})$, $\varepsilon>0$ and 
$\varphi_\varepsilon=\varepsilon^{-2}\chi_\varepsilon$,
where $\chi_\varepsilon$ is the characteristic function of the square $S_\varepsilon$ with 
center at $0$ and sides parallel to the axes with length $\varepsilon$. It is easy to see from the continuity of $f$ 
that the convolutions $f*\varphi_\varepsilon$ belong to $C^1(\mathbb{C})$ and converge 
uniformly on $D(0,2)$ to $f$ 
as $\varepsilon\to 0$. Since $f\in A^1(\Omega\cup\{\infty\})$, the partial derivatives 
$\dfrac{\partial u}{\partial x}$, $\dfrac{\partial u}{\partial y}$ and 
$\dfrac{\partial v}{\partial x}$, $\dfrac{\partial v}{\partial y}$ of $u=Ref$ and $v=Imf$, 
respectively, extend continuously on $\mathbb{C\cup\{\infty\}}$ and hence are bounded. 
We will prove that 
\begin{equation}
\dfrac{\partial u*\varphi_\varepsilon}{\partial x}=\dfrac{\partial u}{\partial x}*
\varphi_\varepsilon
\end{equation}
on $\Omega$. Let $(a,b)\in \Omega$ and $h\in\mathbb{R}^*$. Then
$$\dfrac{ (u*\varphi_\varepsilon)(a+h,b)-(u*\varphi_\varepsilon)(a,b)}{h}=$$
$$\varepsilon^{-2}\iint_{S_\varepsilon}\dfrac{u(a+h-x,b-y)-u(a-x,b-y)}{h}dxdy.$$
It is easy to see that for almost every $(x,y)\in S_\varepsilon$ and every $h\in\mathbb{R}$  
the segment $[(a+h-x,b-y),(a-x,b-y)]$ is a subset of $\Omega$. Thus from Mean Value Theorem 
for almost every $(x,y)\in S_\varepsilon$ and every $h\in\mathbb{R}^*$
there exists $q\in\mathbb{R}$, such that
$$\dfrac{u(a+h-x,b-y)-u(a-x,b-y)}{h}=\dfrac{\partial u}{\partial x}(a+q-x,b-y),$$
which remains bounded by a constant $M>0$. For those $(x,y)\in S_\varepsilon$ 
$$\dfrac{u(a+h-x,b-y)-u(a-x,b-y)}{h}$$ converges to
$$\dfrac{\partial u}{\partial x}(a-x,b-y),$$ as $h$ converges to $0$. Using the Dominated Convergence Theorem we obtain $(5)$. Similarly 
$$\dfrac{\partial u*\varphi_\varepsilon}{\partial y}=\dfrac{\partial u}{\partial y}*
\varphi_\varepsilon,
\dfrac{\partial v*\varphi_\varepsilon}{\partial x}=\dfrac{\partial v}{\partial x}*
\varphi_\varepsilon,
\dfrac{\partial v*\varphi_\varepsilon}{\partial y}=\dfrac{\partial v}{\partial y}*
\varphi_\varepsilon.$$
Since the Cauchy-Riemann equations are satisfied for $f$ almost everywhere, we have that
$$\dfrac{\partial u}{\partial x}*\varphi_\varepsilon=
\dfrac{\partial v}{\partial y}*\varphi_\varepsilon$$ 
and
$$\dfrac{\partial u}{\partial y}*\varphi_\varepsilon=
-\dfrac{\partial v}{\partial x}*\varphi_\varepsilon.$$ 
Thus the Cauchy-Riemann equations are satisfied for every $f*\varphi_\varepsilon$ on $\Omega$. Since the interior of $ L $ is void, the set $ \Omega $ is dense in $ \mathbb{C} $.
From the continuity of the partial derivatives of every $f*\varphi_\varepsilon$ on 
$\mathbb{C}$, 
the Cauchy-Riemann equations are satisfied for every $f*\varphi_\varepsilon$ on $\mathbb{C}$, 
which implies that every $f*\varphi_\varepsilon$ is holomorphic on $\mathbb{C}$. Finally, 
the functions $f*\varphi_\varepsilon$ converge uniformly on $D(0,2)$ to $f$, as 
$\varepsilon\to 0$, which combined with Weierstrass theorem implies that $f$ is holomorphic on 
$D(0,2)$ and therefore $f$ is holomorphic on $\mathbb{C}$.
\end{proof}

\begin{remark}
The above proof also shows that if $ L $ is a compact subset of $\mathbb{C}$ of zero area and if for almost every line $ \varepsilon $ which is parallel to the $ x $-axis and for almost every line $ \varepsilon $ which is parallel to the $ y $-axis, $  \varepsilon \cap L = \emptyset $, then $ a_1(L)=0 $. In fact, it suffices that these intersections are finite for a dense set of $ \varepsilon $ parallel to the $ x $-axis and for a dense set of $ \varepsilon $ parallel to the $ y $-axis. 
\end{remark}

\section{Real analyticity on analytic curves}
Let $L\subset \mathbb{C}$ be a closed set without isolated points.
We denote by $C(L)$ the set of continuous functions $f :L \rightarrow \mathbb{C}$. This space endowed with the topology of uniform convergence on the 
compact subsets of $L$ is a complete metric space and thus Baire's theorem is at our 
disposal.

\begin{lemma}
Let $L \subset \mathbb{C}$ be a closed set without isolated points. Let also $ z_0\in L $ be 
the center and $ r>0 $ be the radius of the open disk $D(z_0,r)$ and  
$ 0 < M < +\infty $. The set of continuous functions $ f:L \rightarrow \mathbb{C} $ for 
which there exists a holomorphic on $ D(z_0,r)$ function $ F $ bounded by 
$ M $ such that $ F|_{D(z_0,r)\cap L}=f|_{D(z_0,r)\cap L} $, 
is a closed subset of $ C\left(L\right) $ and has empty interior.
\end{lemma}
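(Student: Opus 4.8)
The plan is to denote the set in question by $E$ and prove the two assertions—closedness and empty interior—separately.

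For closedness I would argue by sequential limits, using Montel's theorem. Suppose $f_n \in E$ and $f_n \to f$ in $C(L)$, that is, uniformly on compact subsets of $L$. Each $f_n$ carries a witness $F_n$, holomorphic on $D(z_0,r)$ with $|F_n|\le M$ and $F_n = f_n$ on $D(z_0,r)\cap L$. Since the family $\{F_n\}$ is uniformly bounded by $M$, Montel's theorem yields a subsequence $F_{n_k}$ converging locally uniformly on $D(z_0,r)$ to a holomorphic $F$ with $|F|\le M$. For each fixed $w\in D(z_0,r)\cap L$ I would pass to the limit in $F_{n_k}(w)=f_{n_k}(w)$: the left-hand side tends to $F(w)$ by local uniform convergence, while the right-hand side tends to $f(w)$ by pointwise convergence. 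Hence $F=f$ on $D(z_0,r)\cap L$, so $F$ witnesses $f\in E$ and $E$ is closed.

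For the empty interior I would show that any $f\in E$ can be approximated by functions outside $E$, perturbing $f$ near $z_0$ so as to destroy complex differentiability there. Because $z_0\in L$ is not isolated, I can choose distinct $w_j\in D(z_0,r)\cap L$ with $w_j\to z_0$. Fixing a small $\delta>0$, I prescribe on the compact set $S=\{z_0\}\cup\{w_j:j\ge 1\}$ the function $\phi(z_0)=0$ and $\phi(w_j)=\delta(-1)^j(w_j-z_0)$; this is continuous on $S$ since $\phi(w_j)\to 0$. Extending $\phi$ by Tietze's theorem to a continuous function on $\mathbb{C}$ with the same supremum norm and restricting to $L$, I obtain a continuous perturbation with $\|\phi\|_\infty\le \delta\sup_j|w_j-z_0|$, which can be made smaller than any prescribed $\varepsilon$ by taking $\delta$ small; thus $g=f+\phi$ lies in any given basic neighborhood of $f$.

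It remains to verify $g\notin E$, which is where the construction pays off. If $g\in E$ with witness $G$ and $f\in E$ with witness $F$, then $H=G-F$ is holomorphic on $D(z_0,r)$ and agrees with $\phi$ on $D(z_0,r)\cap L$; in particular $H(z_0)=0$ and $H(w_j)=\delta(-1)^j(w_j-z_0)$, so that
$$\frac{H(w_j)-H(z_0)}{w_j-z_0}=\delta(-1)^j,$$
which fails to converge and contradicts the existence of $H'(z_0)$. Hence $g\notin E$, and no point of $E$ is interior.

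The main obstacle I anticipate is arranging the perturbation to be simultaneously genuinely continuous on all of $L$, arbitrarily small in the topology of $C(L)$, and incompatible with holomorphy at $z_0$; the normalization $\phi(w_j)=\delta(-1)^j(w_j-z_0)$ is tailored so that the values tend to $0$ (securing continuity and smallness) while the difference quotient at $z_0$ oscillates (securing the obstruction), and Tietze's theorem supplies the extension without enlarging the norm. I would also note that the bounds $|F|\le M$ and $|G|\le M$ play no role in the contradiction—only the holomorphy of $H$ is used—so the argument is insensitive to the particular value of $M$.
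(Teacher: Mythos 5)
Your proof is correct, and the empty-interior half follows a genuinely different route from the paper's. The paper splits into two cases according to whether $D(z_0,r)\subset L$ or not: when $D(z_0,r)\subset L$, closedness follows from the Weierstrass theorem and the interior is shown empty by adding a small multiple of $\bar z$, which cannot be holomorphic; when $D(z_0,r)\not\subset L$, closedness follows from Montel's theorem (exactly as in your argument) and the interior is shown empty by adding a small pole $\dfrac{a}{2(z-w)}$ at a point $w\in D(z_0,r)\setminus L$, after which the identity theorem (using that points of $L\cap D(z_0,r)$ accumulate at $z_0$) forces any holomorphic witness for the perturbed function to be unbounded near $w$, a contradiction. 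Your argument needs no case distinction: Montel handles closedness in both situations at once, and your perturbation --- a norm-controlled Tietze extension of the oscillating data $\phi(w_j)=\delta(-1)^j(w_j-z_0)$ on a sequence $w_j\to z_0$ in $L$ --- excludes any holomorphic witness because the difference quotients of $H=G-F$ at $z_0$ would equal $\delta(-1)^j$ and so cannot converge, contradicting complex differentiability at the single point $z_0$. This is more elementary (no identity theorem, no need to produce a point of $D(z_0,r)\setminus L$) and it isolates exactly the hypothesis that $L$ has no isolated points. What the paper's pole perturbation buys, by contrast, is reusability: the same device $f+\dfrac{a}{2(z-w)}$ recurs in Theorems 4.7, 5.12, 5.18 and Proposition 6.4, where the perturbing function must itself belong to spaces such as $C^l(\gamma)$ or $A^p(\Omega)$, hence must be smooth or holomorphic near the relevant set --- a constraint a merely continuous Tietze extension would not meet, though it is perfectly adequate in $C(L)$. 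Two further points of agreement: your observation that the bound $M$ is irrelevant to the interior argument is consistent with the paper (there too $M$ is only needed for closedness, via Montel), and the norm-preserving complex-valued Tietze extension you invoke is the same fact the paper records after Definition 3.2.
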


\begin{proof}
Let $ A(M,z_0,r) $ be the set of continuous functions $ f:L \rightarrow \mathbb{C} $ for 
which there is a holomorphic function $F$ definedand on $ D(z_0,r)$ and bounded by $M$ such that 
$ F|_{D(z_0,r)\cap L}=f|_{D(z_0,r)\cap L} $. We distinguish two cases according to whether $D(z_0,r)$ is contained or not in $L$.

$1)$ If $D(z_0,r)\subset L$, then the elements of $ A(M,z_0,r) $ belong to $C(L)$, are 
holomorphic on $D(z_0,r)$ and bounded by $M$. Let $ (f_n)_{n\geq 1} $ be a sequence in $ 
A(M,z_0,r) $ converging uniformly on the compact subsets of $L$ to a function $ f $ defined 
on $L$. Then from Weierstrass theorem, it follows that $f$ will be holomorphic on $D(z_0,r)$ and bounded by $M$. Therefore $f\in A(M,z_0,r) $ and $A(M,z_0,r) $ is a closed subset of
$ C\left(L\right) $.

For the second part of the theorem let us assume that $ A(M,z_0,r) $ does not have empty interior. Then there is a function $ f $ in the interior of $ A(M,z_0,r) $, a compact set 
$K\subset L $ and $\delta>0 $ such that 
$$\left\{g \in C\left(L\right): \sup \limits_{z \in K}\left| f(z)-g(z) \right| < 
\delta \right\}\subset A(M,z_0,r).$$ Then the function $h(z)=f(z)+\dfrac{\delta}{2}\bar{z}$, 
$z\in L$ belongs to $ A(M,z_0,r) $ and therefore is holomorphic on $D(z_0,r)$. But then
the function $\dfrac{\delta}{2}\bar{z}$ will be holomorphic on $D(z_0,r)$ which is absurd.
Thus the interior of $ A(M,z_0,r) $ is void.

$2)$ If $D(z_0,r)$ is not contained in $L$, then there is $ w \in D(z_0,r)\backslash L $.
Let $ (f_n)_{n\geq 1} $ be a sequence in $ A(M,z_0,r) $ where $ f_n $ converges uniformly on
compact subsets of $L$ to 
a function $ f $ defined on $L$. Then for $ n=1,2, \dots $ there are holomorphic  
functions $ F_n:D(z_0,r)\rightarrow\mathbb{C} $ bounded by $ M $ such that $ F_n|_{D(z_0,r)
\cap L}=f_n|_{D(z_0,r)\cap L} $. By Montel's theorem, there is a subsequence $ (F_{k_n}) $ 
of $ (F_n) $ which converges uniformly to a function $ F $ on the compact subsets of 
$ D(z_0,r) $  which is holomorphic on $ D(z_0,r) $ and bounded by $ M $. Because 
$ F_{k_n} \rightarrow f$ at $ D(z_0,r)\cap L $ we have that $ F|_{D(z_0,r)\cap L}=f|
_{D(z_0,r)\cap L} $ and so $ f \in  A(M,z_0,r)  $. Therefore $ A(M,z_0,r) $ is a closed 
subset of $ C\left(L\right) $.

If $ A(M,z_0,r) $ does not have empty interior, then there exists a function $ f $ in the interior of 
$ A(M,z_0,r) $, a compact set $K\subset L $ and $ \delta>0 $ such that 
$$\left\{g \in C\left(L\right): \sup \limits_{z \in K}\left| f(z)-g(z) \right| < 
\delta \right\}\subset A(M,z_0,r).$$  We choose $ 0<a<\delta \inf \limits_{z\in K}|z-w| $. 
We notice that this is possible because $\inf \limits_{z\in K}|z-w| >0$, since $w \not\in L$ 
and $K\subset L$. The function $h(z)=f(z)+ \dfrac{a}{2(z-w)} $ 
for $ z\in L $ belongs to $ A(M,z_0,r) $ and therefore it has a holomorphic and bounded 
extension $H$ on $ D(z_0,r)$, such that $ H|_{D(z_0,r)\cap L}=h|_{D(z_0,r)\cap L} $. 
However, there is a holomorphic function $F:D(z_0,r)\rightarrow\mathbb{C}$ which 
coincides with $f$ on $D(z_0,r)\cap L$. By analytic continuation
$H(z)=F(z)+ \dfrac{a}{2(z-w)} $ for $ z\in D(z_0,r)\setminus \{z_0\}$, since they are equal 
on $L\cap (D(z_0,r)\setminus \{z_0\}) $, which contains
infinitely many points close to $z_0$, all of them being non isolated. As a result $H$ is 
not bounded at $D(z_0,r)$ which is a contradiction. Thus $ A(M,z_0,r) $ has empty interior.
\end{proof}

\begin{defi}
Let $L\subset \mathbb{C}$ be a closed set without isolated points and $z_0 \in L$. A 
function $f\in C(L)$ belongs to the class of non-holomorphically extendable at $z_0$ functions if there is no pair of an open disk $D(z_0,r)$, $r>0$ and a holomorphic function $F:D(z_0,r)\rightarrow \mathbb{C}$ such that $F|_{D(z_0,r)\cap L}=f|_{D(z_0,r)\cap L}$. 
\end{defi}

\begin{theorem}
Let $L\subset \mathbb{C}$ be a closed set without isolated points and $z_0 \in L$. The class of non-holomorphically extendable at $z_0$ functions of $C(L)$ is a dense and $ G_\delta $ subset of $C(L)$.
\end{theorem}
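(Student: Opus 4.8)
The plan is to realize the class of non-holomorphically extendable functions at $z_0$ as the complement of a countable union of the nowhere dense sets furnished by Lemma 4.1, and then to invoke Baire's theorem. Recall that $C(L)$, with the topology of uniform convergence on compact subsets, is a complete metric space, so Baire's theorem is at our disposal.

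First I would show that $f\in C(L)$ is holomorphically extendable at $z_0$ if and only if $f$ belongs to $A(M,z_0,r)$ for some rational $r>0$ and some positive integer $M$. One direction is immediate from the definition of $A(M,z_0,r)$. For the converse, suppose $f$ is extendable: there are a radius $\rho>0$ and a holomorphic function $F:D(z_0,\rho)\rightarrow\mathbb{C}$ with $F|_{D(z_0,\rho)\cap L}=f|_{D(z_0,\rho)\cap L}$. Choosing a rational $r$ with $0<r<\rho$, the function $F$ is continuous, hence bounded, on $\overline{D(z_0,r)}$, so $|F|\leq M$ there for some positive integer $M$; thus $f\in A(M,z_0,r)$. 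This is the only step requiring any care, since the given extension need not be bounded on its full disc of definition, and the point is precisely to shrink the radius so that boundedness by an integer becomes automatic.

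Enumerating the countably many pairs $(M,r)$ with $M\in\{1,2,3,\dots\}$ and $r\in\mathbb{Q}\cap(0,+\infty)$ as $(M_n,r_n)_{n\geq 1}$, the preceding equivalence shows that the extendable functions form exactly
$$E=\bigcup_{n=1}^{\infty}A(M_n,z_0,r_n).$$
By Lemma 4.1 each $A(M_n,z_0,r_n)$ is a closed subset of $C(L)$ with empty interior, so each complement $U_n=C(L)\setminus A(M_n,z_0,r_n)$ is open and dense.

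Finally, the class of non-holomorphically extendable functions at $z_0$ is precisely $C(L)\setminus E=\bigcap_{n=1}^{\infty}U_n$, a countable intersection of open sets, hence a $G_\delta$ subset of $C(L)$. By Baire's theorem this intersection is dense, which completes the proof. The substantive content has already been isolated in Lemma 4.1 (the closedness via Montel's theorem and the empty-interior argument via the perturbations $\frac{\delta}{2}\bar z$ and $\frac{a}{2(z-w)}$); the remaining step is the routine packaging of a countable family of nowhere dense sets into a residual set, so I would not expect any genuine obstacle beyond the radius-shrinking observation above.
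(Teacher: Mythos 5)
Your proposal is correct and follows essentially the same route as the paper: both express the extendable functions as a countable union of the sets $A(M,z_0,r)$ from Lemma 4.1 (the paper uses radii $1/n$ where you use rational radii, an immaterial difference), justify the identification by the same radius-shrinking observation that a holomorphic function on $D(z_0,r)$ is bounded on any strictly smaller disc, and conclude by Baire's theorem. No gaps.
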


\begin{proof}
The set $$ \bigcap \limits^{\infty}_{n=1}\bigcap \limits^{\infty}_{M=1} \left( C(L) \backslash A\left(M,z_0,\dfrac{1}{n}\right) \right) $$ is a dense $ G_\delta $ subset of $ C(L) $ according to Baire's Theorem and coincides with the class 
of non-holomorphically extendable at $z_0$ functions, since every holomorphic function on $D(z_0,r)$ is bounded when restricted on $D(z_0,r')$ for $r'<r$.
\end{proof}

\begin{defi}
Let $L\subset \mathbb{C}$ be a closed set without isolated points. A function $f\in C(L)$ 
belongs to the class of nowhere holomorphically extendable functions defined and continuous 
on $L$, if there exists no pair of an open disk 
$D(z_0,r),z_0 \in L, r>0$ and a holomorphic function $F:D(z_0,r)\rightarrow \mathbb{C}, $ 
such that $F|_{D(z_0,r)\cap L}=f|_{D(z_0,r)\cap L}$. 
\end{defi}

\begin{theorem}
Let $L\subset \mathbb{C}$ be a closed set without isolated points. The class of nowhere 
holomorphically extendable functions of $C(L)$ is a dense and $ G_\delta $ subset of $ C(L) $.
\end{theorem}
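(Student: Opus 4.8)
The plan is to rewrite the nowhere-extendable class as a \emph{countable} intersection of complements of the sets $A(M,z_0,r)$ introduced above and then invoke Baire's theorem, just as in the single-point result (non-extendability at a fixed $z_0$ being a dense $G_\delta$), but with the additional work of passing from the uncountable family of centres in $L$ to a countable one. The obstruction to a direct argument is exactly that the nowhere-extendable class is a priori the intersection $\bigcap_{z_0\in L}$ of the at-$z_0$ classes, and $L$ is in general uncountable; the heart of the proof is to show this intersection is really governed by countably many conditions.

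First I would record the elementary but decisive locality property: if $f$ is holomorphically extendable at $z_0$, witnessed by a disk $D(z_0,r)$ and a holomorphic $F$ with $F=f$ on $D(z_0,r)\cap L$, then for every $w\in L\cap D(z_0,r)$ the same $F$, restricted to any subdisk $D(w,\rho)\subset D(z_0,r)$, witnesses extendability at $w$. Hence the set of points of $L$ at which $f$ extends is \emph{relatively open} in $L$. Since $L\subset\mathbb C$ is separable, I would fix a countable dense subset $\{z_j\}_{j\ge 1}$ of $L$.

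Next I would show that somewhere-extendability is detected on this countable set of centres, with rational radii and integer bounds. Indeed, if $f$ extends at some $z_0\in L$ on $D(z_0,r)$, then the relatively open extendability locus contains $z_0$ and hence contains some $z_j$; choosing $n$ with $1/n<\operatorname{dist}(z_j,\partial D(z_0,r))$ and $M\in\mathbb N$ a bound for $F$ on $\overline{D(z_j,1/n)}$ gives $f\in A(M,z_j,1/n)$. Conversely every $f\in A(M,z_j,1/n)$ is extendable at $z_j\in L$. Therefore the somewhere-extendable functions form exactly the countable union $\bigcup_{j,n,M}A(M,z_j,1/n)$, and the nowhere-extendable class is its complement
$$\bigcap_{j=1}^{\infty}\bigcap_{n=1}^{\infty}\bigcap_{M=1}^{\infty}\big(C(L)\setminus A(M,z_j,1/n)\big).$$

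Finally, since each $A(M,z_j,1/n)$ is closed with empty interior by the Lemma above on the sets $A(M,z_0,r)$, each complement is open and dense in $C(L)$, and Baire's theorem (applicable because $C(L)$ with uniform convergence on compacta is a complete metric space) yields that the displayed intersection is a dense $G_\delta$ subset of $C(L)$. The only step requiring genuine care is the reduction from the uncountable intersection over all centres in $L$ to a countable one; this is precisely what the locality/relative-openness observation, combined with the density of $\{z_j\}$ in $L$, delivers, after which the conclusion follows from the already-established closedness and empty-interior properties of the $A(M,z_0,r)$.
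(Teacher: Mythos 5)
Your proposal is correct and takes essentially the same route as the paper: the paper's proof likewise writes the nowhere-extendable class as $\bigcap_{l}\bigcap_{n}\bigcap_{M}\bigl(C(L)\setminus A\left(M,z_l,\tfrac{1}{n}\right)\bigr)$ over a dense sequence $(z_l)$ in $L$, and concludes by Lemma 4.1 (each $A(M,z_0,r)$ is closed with empty interior) together with Baire's theorem. The only difference is expository: you spell out the reduction from all centres in $L$ to the countable dense set via the relative openness of the extendability locus, a step the paper leaves implicit, remarking only that a holomorphic function on $D(z_0,r)$ becomes bounded when restricted to a smaller disk.
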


\begin{proof}
Let $ z_l\in L, l=1,2,3,\dots $ be a dense sequence. Then the set $$ \bigcap 
\limits^{\infty}_{l=1}\bigcap 
\limits^{\infty}_{n=1}\bigcap \limits^{\infty}_{M=1} \left( C(L) \backslash A\left( M,z_l,
\dfrac{1}
{n}\right) \right) $$ is a dense $ G_\delta $ subset of $ C(L) $ according to Baire's 
Theorem. This set coincides with the class of nowhere holomorphically extendable 
functions of $C(L)$, since every holomorphic function on $D(z_0,r)$ is 
bounded when restricted on $D(z_0,r')$ for $r'<r$.
\end{proof}

The proof of the above results can be used to prove similar results at some special cases.
Let $\gamma : I \rightarrow \mathbb{C}$ be a a continuous and locally injective curve, 
where $I$ is an interval in $\mathbb{R}$ of any type. The symbol $\gamma ^*$ will be used instead of $\gamma(I)$. It is obvious that $\gamma ^*$ has no isolated points. We also recall Definition 
$2.2$ of $C^k(\gamma)$.

\begin{defi}
Let $\gamma : I \rightarrow \mathbb{C}$ be a locally injective curve and 
$z_0=\gamma(t_0),t_0 \in I$. A function $f: \gamma^* \rightarrow \mathbb{C}$ belongs to the class of non-holomorphically extendable at $(t_0,z_0=\gamma(t_0))$ functions, if  
there are no open disk 
$D(z_0,r), r>0$ and $\eta >0$ and a holomorphic function $F:D(z_0,r)\rightarrow 
\mathbb{C}$, such that $\gamma((t_0-\eta,t_0+\eta)\cap I) \subset
D(z_0,r)$ and $ F(\gamma(t))=f(\gamma(t)) $ for all $t\in(t_0-\eta,t_0+\eta)\cap I$. 
Otherwise we say that $f$ is holomorphically extendable at $(t_0,z_0=\gamma(t_0))$.
\end{defi}

\begin{theorem}
Let $k,l\in \left\{ 0,1,2,\dots\right\}\cup\left\{\infty\right\}$ such that $l\leq k$. Let 
also $\gamma : I \rightarrow \mathbb{C}$ be a locally injective function of 
$C^k(I)$ and $t_0 \in I$. The class of non-holomorphically extendable at $(t_0,z_0=\gamma(t_0))$ functions belonging to $C^l(\gamma )$ is a dense and $ G_\delta $ subset of $ C^l(\gamma )$.
\end{theorem}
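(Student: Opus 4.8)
The plan is to mimic the proofs of Lemma 4.1 and Theorem 4.3, replacing the space $C(L)$ by $C^l(\gamma)$ and the unlocalized notion of extendability by the curve-localized one of Definition 4.6. For positive integers $M,n,m$ I would introduce the set $A(M,z_0,1/n,1/m)$ consisting of those $f\in C^l(\gamma)$ for which there is a holomorphic $F:D(z_0,1/n)\to\mathbb{C}$ with $|F|\le M$ on $D(z_0,1/n)$, with $\gamma((t_0-1/m,t_0+1/m)\cap I)\subset D(z_0,1/n)$, and with $F(\gamma(t))=f(\gamma(t))$ for all $t\in(t_0-1/m,t_0+1/m)\cap I$. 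A routine shrinking argument (replace the radius $r$ of an extension by a smaller $1/n$, on which $F$ is automatically bounded by some integer $M$, and replace $\eta$ by a smaller $1/m$ using continuity of $\gamma$ at $t_0$) shows that $f$ is holomorphically extendable at $(t_0,z_0)$ precisely when $f\in A(M,z_0,1/n,1/m)$ for some $M,n,m$. Hence the non-extendable class equals $\bigcap_{M,n,m}\big(C^l(\gamma)\setminus A(M,z_0,1/n,1/m)\big)$, and once each $A(M,z_0,1/n,1/m)$ is shown to be closed with empty interior, Baire's theorem (available since $C^l(\gamma)$ is Banach or Fr\'echet by Definition 2.2) yields the dense $G_\delta$ conclusion.

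Closedness I would obtain exactly as in Lemma 4.1 via Montel's theorem: if $f_j\to f$ in $C^l(\gamma)$ then in particular $f_j\circ\gamma\to f\circ\gamma$ uniformly on compact subsets of $I$, the associated witnesses $F_j$ are uniformly bounded by $M$ on $D(z_0,1/n)$, so a subsequence converges locally uniformly to a holomorphic $F$ with $|F|\le M$, and passing to the limit in $F_{j_i}(\gamma(t))=f_{j_i}(\gamma(t))$ along the arc shows $f\in A(M,z_0,1/n,1/m)$.

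The heart of the argument is the empty-interior step. First I would record that $\gamma^*$ has empty interior: covering $I$ by countably many compact intervals on which $\gamma$ is injective, each image is a Jordan arc, hence a compact (closed) set of topological dimension $1$, which therefore contains no open disc and is nowhere dense in $\mathbb{C}$; thus $\gamma^*$ is meager and has empty interior. Consequently I may pick $w\in D(z_0,1/n)\setminus\gamma^*$. Assuming, for contradiction, that some $f\in A(M,z_0,1/n,1/m)$ is interior, there is a basic neighborhood $\{g:\max_{0\le i\le q}\sup_{t\in K}|(g\circ\gamma-f\circ\gamma)^{(i)}(t)|<\delta\}\subset A(M,z_0,1/n,1/m)$ for some compact $K\subset I$, some finite $q\le l$ and some $\delta>0$. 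Since $w\notin\gamma^*$, the map $t\mapsto\frac{a}{2(\gamma(t)-w)}$ lies in $C^l(\gamma)$, so $h=f+\frac{a}{2(z-w)}$ is a legitimate element of $C^l(\gamma)$, and I would perturb $f$ by it.

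The main obstacle, and the genuinely new point compared with the $C(L)$ case of Lemma 4.1, is to make this perturbation small in all the relevant $C^l$-seminorms simultaneously while still forcing a contradiction. Here I would control the derivatives $\frac{d^i}{dt^i}\frac{1}{\gamma(t)-w}$ for $i\le q$ through the chain-rule formula of Lemma 2.15 (equivalently Fa\`a di Bruno): on the compact $K$ these are bounded by a constant depending only on $\gamma,w,K,q$, because $\gamma\in C^k$ with $k\ge l\ge q$ and $|\gamma(t)-w|$ is bounded away from $0$ on $K$. Therefore, choosing $a>0$ small enough, $h$ lies in the chosen neighborhood, whence $h\in A(M,z_0,1/n,1/m)$ and admits a holomorphic witness $H$ on $D(z_0,1/n)$. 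Comparing $H$ with the witness $F$ of $f$, the holomorphic function $H-F-\frac{a}{2(z-w)}$ on the connected set $D(z_0,1/n)\setminus\{w\}$ vanishes on $\gamma((t_0-1/m,t_0+1/m)\cap I)$, an infinite set accumulating at $z_0\ne w$; by the identity theorem it vanishes identically, which is impossible since $H-F$ is holomorphic at $w$ while $\frac{a}{2(z-w)}$ has a pole there. This contradiction shows that $A(M,z_0,1/n,1/m)$ has empty interior, and the proof is complete.
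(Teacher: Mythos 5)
Your proposal is correct and follows essentially the same route as the paper's own proof of Theorem 4.7: the same exhaustion by sets of functions admitting $M$-bounded holomorphic extensions on shrinking disks agreeing along shrinking arcs, closedness via Montel's theorem, empty interior via the perturbation $h=f+\frac{a}{2(z-w)}$ with $w\in D(z_0,1/n)\setminus\gamma^*$ (using that $\gamma^*$ has empty interior, the paper's Proposition 6.2), the identity-theorem contradiction at the pole $w$, and finally Baire's theorem applied to the countable intersection. If anything, your handling of the empty-interior step is more careful than the paper's: the paper's choice of $a$ controls only the $z$-derivatives of the perturbation while the $C^l(\gamma)$-seminorms involve $t$-derivatives of $h\circ\gamma$, and your explicit appeal to the chain-rule polynomials of Lemma 2.15 on the compact set $K$ supplies exactly the constants that the paper leaves implicit.
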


\begin{proof}
Let $r>0$ and $\eta>0$ such that $\gamma(t_0-\eta,t_0+\eta)\subset D(\gamma(t_0),r)$. Let 
also $ A(M,z_0,r,\eta,l) $ be the set of $C^l(I)$ functions for which 
there is a holomorphic function $F$ defined on $ D(z_0,r)$ and bounded by $M$, such 
that $F(\gamma(t))=f(\gamma(t)) $ for every $t\in(t_0-\eta,t_0+\eta)\cap I$; That is we assume 
that $|F(z)|\leq M$ for all $z\in D(z_0,r)$.

Since $\gamma \in C^k(I)$ the open disk $D(z_0,r)$ is not contained in $\gamma ^*$ (see 
Proposition $6.2$) and thus there is $ w \in D(z_0,r)
\backslash \gamma ^* $. Similarly to the proof of Lemma $4.1$ $A(M,z_0,r,\eta,l) $ is a closed subset of $ C^l(\gamma ) $. 

If $A(M,z_0,r,\eta,l) $ does not have empty interior, then there is a function $ f $ in the 
interior of 
$ A(M,z_0,r,\eta,l) $, $b\in \{0,1,2,\dots\}$, a compact set $K\subset I $ and $ \delta>0 $ 
such that 
\begin{align*}
\{g \in C^k(\gamma ): \sup \limits_{t \in K}\left| (f\circ \gamma)^{(j)}(t)-(g\circ 
\gamma)^{(j)}(t) \right| < 
\delta,\\ 0\leq j \leq b \}\subset A(M,z_0,r,\eta,l).
\end{align*}
We choose $ 0<a<\delta \min\{
\inf \limits_{t\in K}|\gamma(t)-w|,\inf \limits_{t\in K}|\gamma(t)-w|^2,\dots, 
\dfrac{1}{b!}\inf \limits_{t\in K}|\gamma(t)-w|^{b+1} \}.$
This is possible because $w\not\in\gamma^*$ and $\gamma(K)\subset \gamma^*$. The 
function $h(z)=f(z)+ \dfrac{a}{2(z-w)} $ for $ z\in \gamma ^* $ belongs to $ A(M,z_0,r,
\eta,l) $,
since $\gamma \in C^k(I)$. Similarly to the proof of Lemma $4.1$ we are led to a contradiction.
Therefore $A(M,z_0,r,\eta,l) $ has empty interior.

Let $s_{n,m}$, $n=1,2,3,\dots$, $m=1,2,3,\dots$ be a sequence such that $\lim\limits_{m 
\rightarrow \infty}s _{n,m}=0$ for every $n=1,2,3,\dots$ and $\gamma(t_0-s_{n,m},t_0+s_{n,m}) 
\subset D \left( \gamma(t_0),\dfrac{1}{n}\right)$ for every $n=1,2,3,\dots$ and every 
$m=1,2,3,\dots$. Then the class of non-holomorphically 
extendable at $(t_0,z_0=\gamma(t_0))$
functions belonging to $C^l(\gamma )$ coincides with the set 
$$ \bigcap 
\limits^{\infty}_{n=1}\bigcap \limits^{\infty}_{m=1} \bigcap \limits^{\infty}_{M=1} \left( 
C^l(\gamma) \backslash A\left(M,z_0,
\dfrac{1}{n},s_{n,m},l \right) \right) ,$$ because every holomorphic function on $D(z_0,r)$ 
becomes bounded if we restrict it on $D(z_0,r')$ for $r'<r$. Thus according to Baire's 
theorem the class of non-holomorphically extendable at $(t_0,z_0=\gamma(t_0))$ functions 
of $C^l(\gamma )$ is a dense and $ G_\delta $ subset of $ C^l(\gamma ) $.
\end{proof}

\begin{defi}
Let $\gamma : I \rightarrow \mathbb{C}$ be a locally injective curve. A function $f :\gamma^* \rightarrow \mathbb{C} $ belongs to the class of
nowhere holomorphically extendable functions if  
there are no open disk 
$D(z_0,r),z_0=\gamma(t_0),t_0\in I, r>0$ and $\eta >0$ and a holomorphic function 
$F:D(z_0,r)\rightarrow 
\mathbb{C},$ such that $\gamma((t_0-\eta,t_0+\eta)\cap I) \subset D(z_0,r)$ and $ F(\gamma(t))=f(\gamma(t)) $ for every $t\in(t_0-\eta,t_0+\eta)\cap I$.
\end{defi}

\begin{theorem}
Let $k,l\in \left\{ 0,1,2,\dots\right\}\cup\left\{\infty\right\}$ such that $l\leq k$. Let 
also $\gamma : I \rightarrow \mathbb{C}$ be a locally injective function of
$C^k(I)$. The class of nowhere holomorphically extendable 
functions of $C^l(\gamma )$ is a dense and $ G_\delta $ subset of 
$ C^l(\gamma ) $.
\end{theorem}

\begin{proof}
Let $t_n\in I$, $n=1,2,3,\dots$ be a dense sequence in $I$. Then the class of nowhere 
holomorphically extendable functions of $C^k(\gamma )$ coincides with the 
intersection over 
every $n=1,2,3,\dots$ of the classes of non-holomorphically extendable at $(t_n,z_n=
\gamma(t_n))$ functions
of $C^l(\gamma )$. Since the classes of non-holomorphically extendable at 
$(t_n,z_n=\gamma(t_n))$
functions of $C^l(\gamma )$ are dense and $ G_\delta $ subsets of 
$ C^l(\gamma ) $ according to Theorem $4.7$, it follows that the class of nowhere 
holomorphically extendable functions of $C^l(\gamma )$ is a dense and 
$ G_\delta $ subset of $ C^l(\gamma ) $ from Baire's theorem.
\end{proof}

We intend to prove results about real analyticity using results about non-extendability.
At first we notice that Proposition $2.9$ and Theorem $4.9$ immediately prove the following theorems.

\begin{theorem}
Let $\gamma : I \rightarrow \mathbb{C}$ be an analytic curve and $t_0 \in I$. For $ k=0,1,2, \dots$ or $k=\infty$ the class of functions $ f \in C^{k}(\gamma) $ which are 
not real analytic at $(t_0,z_0=\gamma(t_0))$ is a dense and $ G_{\delta} $ subset of $ C^{k}
(\gamma) $.
\end{theorem}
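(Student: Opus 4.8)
The plan is to recognise that, for an analytic curve, real analyticity at $(t_0,z_0)$ is literally the same condition as holomorphic extendability at $(t_0,z_0)$, so that the asserted genericity statement is just a translation of the non-extendability result of Theorem 4.7. Thus I would reduce Theorem 4.10 to Theorem 4.7 via the equivalence in Proposition 2.9.

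First I would record the hypotheses needed to invoke Theorem 4.7. Since $\gamma$ is an analytic curve it is analytic at every point of $I$, in particular at $t_0$; each local defining map $F$ of Definition 2.3 is holomorphic and injective, so $\gamma$ is locally injective and, restricting $F$ to the real interval, $\gamma\in C^{\infty}(I)$. Hence the smoothness hypothesis of Theorem 4.7 is met with ambient class $\infty$, and Theorem 4.7 applies for every function-space index $k\in\{0,1,2,\dots\}\cup\{\infty\}$ (taking its parameter $l$ equal to our $k$ and its ambient parameter equal to $\infty\geq k$).

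Next I would invoke Proposition 2.9: because $\gamma$ is analytic at $t_0$, a function $f:\gamma^*\to\mathbb{C}$ is real analytic at $(t_0,z_0=\gamma(t_0))$ if and only if it is holomorphically extendable at $(t_0,z_0=\gamma(t_0))$. Negating this equivalence, a function $f\in C^{k}(\gamma)$ fails to be real analytic at $(t_0,z_0)$ precisely when it is non-holomorphically extendable at $(t_0,z_0)$ in the sense of Definition 4.6. Consequently, as subsets of $C^{k}(\gamma)$, the class of $f$ that are not real analytic at $(t_0,z_0)$ coincides exactly with the class of non-holomorphically extendable at $(t_0,z_0)$ functions belonging to $C^{k}(\gamma)$.

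Finally I would conclude with Theorem 4.7, which states precisely that this latter class is a dense and $G_{\delta}$ subset of $C^{k}(\gamma)$; the set-theoretic identification of the two classes then transfers the dense-$G_{\delta}$ property to the set of non-real-analytic functions, which is what is claimed. I expect no serious obstacle: the argument is essentially a translation, and the only points requiring care are the verification that an analytic curve is $C^{\infty}$ (so that Theorem 4.7 is available throughout the range of $k$, including $k=\infty$) and the observation that the point data $(t_0,z_0=\gamma(t_0))$ entering Definition 2.8 and Definition 4.6 are identical, so that Proposition 2.9 applies verbatim. (The companion statement for \emph{nowhere} real analytic functions would be obtained in the same way from Theorem 4.9 in place of Theorem 4.7.)
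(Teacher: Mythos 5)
Your proposal is correct and follows exactly the paper's route: the paper proves Theorem 4.10 by the one-line observation that Proposition 2.9 (equivalence of real analyticity and holomorphic extendability on an analytic curve) identifies the class in question with the class of non-holomorphically extendable functions at $(t_0,z_0)$, which is dense and $G_\delta$ by the genericity theorem of Section 4 (Theorem 4.7 for the pointwise statement). Your additional verifications---that an analytic curve is locally injective and of class $C^{\infty}(I)$, so Theorem 4.7 applies for every $k$ including $k=\infty$---are exactly the details the paper leaves implicit.
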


\begin{theorem}
Let $\gamma : I \rightarrow \mathbb{C}$ be an analytic curve. For $ k=0,1,2, \dots$ or $k=\infty$ the class of functions $ f \in C^{k}(\gamma) $ which are 
nowhere real analytic is a dense and $ G_{\delta} $ subset of $ C^{k}(\gamma) $.
\end{theorem}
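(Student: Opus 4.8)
The plan is to reduce this statement to Theorem 4.9 by identifying the class of nowhere real analytic functions with the class of nowhere holomorphically extendable functions, and then to check that the smoothness hypotheses match up.

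First I would record that since $\gamma$ is an analytic curve, it is analytic at every $t_0\in I$, and moreover $\gamma\in C^\infty(I)$. This is the only place analyticity of the curve enters, and it serves two roles at once: it guarantees the pointwise equivalence of Proposition 2.9 at every $t_0$, and it supplies enough curve-smoothness ($C^\infty$) to invoke Theorem 4.9 for the function space $C^k(\gamma)$ for \emph{every} $k\in\{0,1,2,\dots\}\cup\{\infty\}$.

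Next, fix $f\in C^k(\gamma)$. Applying Proposition 2.9 at each point $t_0\in I$ (legitimate because $\gamma$ is analytic at every $t_0$), the function $f$ is real analytic at $(t_0,z_0=\gamma(t_0))$ if and only if $f$ is holomorphically extendable at $(t_0,z_0=\gamma(t_0))$. Negating this equivalence and intersecting over all $t_0\in I$, I get that $f$ is nowhere real analytic precisely when $f$ is nowhere holomorphically extendable in the sense of Definition 4.8. Thus the class of nowhere real analytic functions and the class of nowhere holomorphically extendable functions coincide as subsets of $C^k(\gamma)$.

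Finally I would invoke Theorem 4.9 with function-space index $l=k$ and curve-smoothness index $\infty$ (which satisfies $l\leq\infty$ since $\gamma\in C^\infty(I)$): the class of nowhere holomorphically extendable functions belonging to $C^k(\gamma)$ is a dense and $G_\delta$ subset of $C^k(\gamma)$. By the set identification above, the same holds for the class of nowhere real analytic functions, giving the conclusion. The only point requiring care—and the likeliest source of a slip—is the index bookkeeping: Theorem 4.9 deliberately separates the smoothness of the curve from the smoothness of the functions, so one must verify that analyticity of $\gamma$ covers the full range of $k$, including the limiting case $k=\infty$, where one needs $\gamma\in C^\infty(I)$ exactly. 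Everything else is an immediate translation through Proposition 2.9, which is why the theorem follows at once from the two cited results.
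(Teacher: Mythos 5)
Your proof is correct and takes essentially the same route as the paper, which derives this theorem immediately by combining Proposition 2.9 (applied at every point, legitimate since $\gamma$ is analytic at every $t_0\in I$) with Theorem 4.9, exactly as you do. Your explicit index bookkeeping---observing that an analytic curve is automatically in $C^\infty(I)$, so Theorem 4.9 applies with curve-smoothness index $\infty$ and function-space index $l=k$ for every $k$, including $k=\infty$---correctly fills in what the paper leaves implicit.
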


\begin{remark}
The fact that the class of functions $ f \in C^{\infty}([0,1]) $ which are 
nowhere real analytic is itself a dense and $ G_{\delta} $ subset of 
$ C^{\infty}([0,1]) $ strengthens the result \cite{[1]}, where it is only proven that 
this class contains a dense and $ G_{\delta} $ subset of $ C^{\infty}([0,1]) $.
\end{remark}

\begin{prop}
Let $ \gamma:I \rightarrow \mathbb{C}$ be an analytic curve and $\gamma^*=\gamma(I)$. Let also 
$\Phi : \gamma^* \rightarrow \mathbb{C}$
be a homeomorphism of $\gamma^*$ on $\Phi(\gamma^*)\subset\mathbb{C}$ and $\delta=\Phi\circ 
\gamma$. Then the set of functions $f\in C^k(\delta), k\in 
\left\{ 0,1,2,\dots\right\}\cup\left\{\infty\right\}$ which are nowhere analytic is a
$G_{\delta} $ and dense subset of $C^k(\delta)$.
\end{prop}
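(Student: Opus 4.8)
The plan is to reduce the statement to Theorem 4.11 by transporting everything through the pullback by $\Phi$. Since $\gamma$ is locally injective and $\Phi$ is a homeomorphism, the composite $\delta=\Phi\circ\gamma$ is again continuous and locally injective, so the space $C^k(\delta)$ is well defined by Definition 2.2 (no smoothness of $\Phi$ is needed). I introduce the pullback map $T:C^k(\delta)\to C^k(\gamma)$ given by $T(f)=f\circ\Phi$. The crucial identity is
$$(T(f))\circ\gamma=(f\circ\Phi)\circ\gamma=f\circ(\Phi\circ\gamma)=f\circ\delta,$$
so that $T(f)\circ\gamma$ and $f\circ\delta$ are literally the same function on $I$.

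Next I would verify that $T$ is a linear homeomorphism. Both $C^k(\gamma)$ and $C^k(\delta)$ are defined over the same interval $I$, so one may use the same exhaustion $(I_n)$ in Definition 2.2 for both. By the identity above, for every $I_n$ and every $l\le k$ we have $\sup_{t\in I_n}|(T(f)\circ\gamma)^{(l)}(t)|=\sup_{t\in I_n}|(f\circ\delta)^{(l)}(t)|$; hence the seminorm of $f$ in $C^k(\delta)$ equals the corresponding seminorm of $T(f)$ in $C^k(\gamma)$ for each pair $(n,l)$, and in particular $f\in C^k(\delta)$ if and only if $T(f)\in C^k(\gamma)$. Surjectivity follows from the two-sided inverse $g\mapsto g\circ\Phi^{-1}$: if $g\in C^k(\gamma)$ then $(g\circ\Phi^{-1})\circ\delta=g\circ\gamma$, so $g\circ\Phi^{-1}\in C^k(\delta)$. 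Thus $T$ is a linear bijection preserving the defining families of seminorms, hence a homeomorphism between the two spaces (Banach or Fréchet according to the case).

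The point of this is that nowhere analyticity is preserved under $T$. Indeed, real analyticity of $f$ at $(t_0,\delta(t_0))$ in the sense of Definition 2.8 means precisely that the one-variable function $t\mapsto f(\delta(t))$ is given by a convergent power series $\sum_n a_n(t-t_0)^n$ near $t_0$; since $f\circ\delta=T(f)\circ\gamma$, this holds if and only if $T(f)$ is real analytic at $(t_0,\gamma(t_0))$. Consequently the set of nowhere analytic functions in $C^k(\delta)$ is exactly $T^{-1}$ of the set of nowhere real analytic functions in $C^k(\gamma)$.

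Finally, because $\gamma$ is an analytic curve, Theorem 4.11 gives that the set of nowhere real analytic functions in $C^k(\gamma)$ is a dense $G_\delta$ subset. As $T$ is a homeomorphism, the preimage of a dense $G_\delta$ set is again a dense $G_\delta$ set, and this preimage is precisely the set of nowhere analytic functions in $C^k(\delta)$; this finishes the argument. The only genuinely delicate point is interpretational rather than technical: because $\delta$ itself need not be an analytic curve, one cannot invoke Proposition 2.9, and ``analytic'' must be read as real analyticity in the sense of Definition 2.8, i.e. as a property of the composite $f\circ\delta$. Once the identity $f\circ\delta=T(f)\circ\gamma$ is recorded, both the seminorm correspondence and the transfer of nowhere analyticity are immediate.
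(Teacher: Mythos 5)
Your proof is correct and is essentially the paper's own argument: the paper uses the isometry $S:C^k(\gamma)\rightarrow C^k(\delta)$, $S(g)=g\circ\Phi^{-1}$ (the inverse of your pullback $T$), notes that nowhere analyticity is preserved under it, and invokes Theorem 4.11. You simply spell out the seminorm-preservation and the transfer of real analyticity via the identity $f\circ\delta=T(f)\circ\gamma$, details which the paper leaves implicit.
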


\begin{proof}
The map $S:C^k(\gamma)\rightarrow C^k(\delta)$ defined by $S(g)=g\circ \Phi^{-1}$, $g\in
C^k(\gamma)$ is a surjective isometry. Also a function $g \in C^k(\gamma)$ is nowhere analytic if 
and only if $S(g)$ is nowhere analytic. Theorem $4.11$ combined with the above facts yields 
the result.
\end{proof}

\begin{corol}
Assume that $J$ is an interval and $\gamma(t)=t$ or $J=\mathbb{R}$ and
$\gamma(t)=e^{it}$. Let $X$ denote the image of $\gamma$ and $\Phi : X \rightarrow \mathbb{C}$ be a homeomorphism of $X$ on $\Phi(X)\subset \mathbb{C}$ and $\delta=\Phi\circ \gamma$. Then the
set of functions $f\in C^k(\delta), k\in 
\left\{ 0,1,2,\dots\right\}\cup\left\{\infty\right\}$ which are nowhere analytic is a
$G_{\delta} $ and dense subset of $C^k(\delta)$.
\end{corol}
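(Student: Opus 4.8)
The plan is to observe that this corollary is nothing more than the special case of Proposition 4.13 in which the analytic curve $\gamma$ is taken to be one of the two concrete curves listed. Thus the only thing that genuinely requires checking is that each of these two choices of $\gamma$ is an analytic curve in the sense of Definition 2.3, after which Proposition 4.13 applies verbatim and delivers the conclusion.

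For the first case, $\gamma(t)=t$ on an interval $J\subset\mathbb{R}$, I would note that $\gamma$ is continuous and injective, and that it is analytic at every $t_0\in J$: one simply takes $V$ to be any open disc centered at $t_0$ and $F(z)=z$, which is holomorphic and injective on $V$ and restricts to $\gamma$ on the real axis. For the second case, $\gamma(t)=e^{it}$ on $\mathbb{R}$, I would verify local injectivity and analyticity at each $t_0\in\mathbb{R}$ by taking $F(z)=e^{iz}$ on a disc $V=D(t_0,\rho)$ with $\rho<\pi$; then $F$ is holomorphic, $F'(z)=ie^{iz}\neq 0$, and $F$ is injective on $V$ (since $\rho<\pi$), while $F|_{(t_0-\rho,t_0+\rho)}=\gamma$. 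Hence $\gamma$ is an analytic curve defined on the interval $\mathbb{R}$, even though it is only locally (and not globally) injective; this is all that Definition 2.3 and Proposition 4.13 require. In both cases $\gamma^*=X$, the image being respectively the interval $J$ or the unit circle.

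With analyticity of $\gamma$ established, I would invoke Proposition 4.13 directly: for the given homeomorphism $\Phi:X\to\mathbb{C}$ and $\delta=\Phi\circ\gamma$, the proposition asserts that the set of $f\in C^k(\delta)$ which are nowhere analytic is a dense $G_\delta$ subset of $C^k(\delta)$, for every $k\in\{0,1,2,\dots\}\cup\{\infty\}$. This is exactly the assertion of the corollary.

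I do not expect any substantial obstacle, since the corollary is a direct specialization. The only point demanding a moment's attention is the second curve: one must choose the neighborhood small enough (radius below $\pi$) to secure injectivity of the holomorphic extension $z\mapsto e^{iz}$, and one should note that global injectivity of $\gamma$ is not needed, because Proposition 4.13 rests only on the pointwise analyticity furnished by Definition 2.3. The transfer isometry $S(g)=g\circ\Phi^{-1}$ used in the proof of that proposition respects analyticity at each individual point precisely because $(S(g))\circ\delta=g\circ\gamma$, so the reduction to Theorem 4.11 is unaffected by the curve being closed.
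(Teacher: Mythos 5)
Your proposal is correct and matches the paper's own proof, which likewise observes that each of the two curves is analytic on an interval and then cites Proposition 4.13. Your explicit verification of analyticity (taking $F(z)=z$, respectively $F(z)=e^{iz}$ on a disc of radius less than $\pi$) simply fills in details the paper leaves implicit.
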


\begin{proof}
The curve $\gamma$ is an analytic curve defined on an interval. The result follows from
Proposition $4.13$.
\end{proof}

\begin{remark}
According to Corollary $4.14$ for any Jordan curve or Jordan arc $\delta$ with a 
suitable parametrization generically on $C^k(\delta)$, $ k\in 
\left\{ 0,1,2,\dots\right\}\cup\left\{\infty\right\}$ every function is nowhere analytic.
In fact this holds for all parametrizations of $\delta^*$ and the spaces $C^k(\delta)$
are the same for all parametrizations so that $ \delta $ is a homeomorphism between the unit circle $ T $ or $ [0,1] $ and $ \delta^* $ (see Preliminaries).
\end{remark}

\section{Extendability of functions on domains of finite connectivity}

We start this section with the following general fact.

\begin{prop}
Let $n \in\{1,2,\dots\} $. Let also $X_1,\dots ,X_n$ be complete metric spaces and $A_1,\dots ,A_n$ dense and $G_\delta$ subsets of $X_1,\dots,X_n$, respectively. Then the metric space $X_1\times\dots\times X_n$, endowed with the product topology, is complete and 
$A_1\times\dots \times A_n$ is a dense and $G_\delta$ subset of $X_1\times\dots\times X_n$.
\end{prop}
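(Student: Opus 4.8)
The plan is to verify the three assertions separately, each by a standard product-space argument. First I would fix metrics $d_i$ on the spaces $X_i$ and metrize the product by
$$d\big((x_1,\dots,x_n),(y_1,\dots,y_n)\big)=\max_{1\le i\le n} d_i(x_i,y_i).$$
The open balls of $d$ are exactly the products of open balls, so $d$ induces the product topology. A sequence in the product is $d$-Cauchy if and only if each of its $n$ coordinate sequences is $d_i$-Cauchy; since every $X_i$ is complete, each coordinate sequence converges, and the vector of coordinate limits is the $d$-limit of the original sequence. Hence $(X_1\times\cdots\times X_n,d)$ is a complete metric space, and Baire's theorem is at our disposal for it.

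Next, density. Since the products of open sets form a base for the product topology, it suffices to meet every nonempty basic open set $V_1\times\cdots\times V_n$, with each $V_i\subset X_i$ open. Each $A_i$ is dense in $X_i$, so $A_i\cap V_i\neq\emptyset$; picking $a_i\in A_i\cap V_i$ produces a point $(a_1,\dots,a_n)$ lying in $(A_1\times\cdots\times A_n)\cap(V_1\times\cdots\times V_n)$. Therefore $A_1\times\cdots\times A_n$ is dense in the product.

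For the $G_\delta$ property I would use the continuous coordinate projections $\pi_i:X_1\times\cdots\times X_n\to X_i$ together with the identity
$$A_1\times\cdots\times A_n=\bigcap_{i=1}^n \pi_i^{-1}(A_i).$$
Writing $A_i=\bigcap_{m=1}^\infty U_{i,m}$ with each $U_{i,m}\subset X_i$ open, continuity of $\pi_i$ gives $\pi_i^{-1}(A_i)=\bigcap_{m=1}^\infty \pi_i^{-1}(U_{i,m})$, a countable intersection of open subsets of the product, hence a $G_\delta$ set. A finite intersection of $G_\delta$ sets is again $G_\delta$, so $A_1\times\cdots\times A_n$ is $G_\delta$.

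There is no serious obstacle here; the statement is a routine fact about finite products of metric spaces. The only point deserving a little care is the $G_\delta$ step, where one must check that the defining countable intersections survive passage to the product. This is precisely what the projection identity above makes transparent: preimages of the open sets $U_{i,m}$ under the continuous maps $\pi_i$ remain open, and both countable and finite intersections of $G_\delta$ sets are again $G_\delta$.
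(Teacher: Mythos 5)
Your proof is correct, but it follows a different route from the paper's. The paper's argument stays entirely inside the Baire category framework: it writes each $A_i=\bigcap_{k=1}^{\infty}A_{i,k}$ with $A_{i,k}$ open and dense (automatic for a dense $G_\delta$ in $X_i$, since each open set containing the dense set $A_i$ is itself dense), observes that
$$A_1\times\cdots\times A_n=\bigcap_{k=1}^{\infty}\left(A_{1,k}\times\cdots\times A_{n,k}\right),$$
where each $A_{1,k}\times\cdots\times A_{n,k}$ is open and dense in the product, and then invokes Baire's theorem in the complete space $X_1\times\cdots\times X_n$ to conclude density, the $G_\delta$ property being immediate from this representation. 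You instead split the three claims and prove the last two by purely topological means: density because a product of dense sets meets every basic open set $V_1\times\cdots\times V_n$, and the $G_\delta$ property via the identity $A_1\times\cdots\times A_n=\bigcap_{i=1}^{n}\pi_i^{-1}(A_i)$ and continuity of the projections. What your route buys is economy of hypotheses: it never uses Baire's theorem, and it makes visible that completeness is needed only for the assertion that the product is a complete metric space, while density and $G_\delta$-ness of the product hold in arbitrary topological spaces. What the paper's route buys is that it directly exhibits $A_1\times\cdots\times A_n$ as a countable intersection of \emph{dense open} sets of the product, which is exactly the form in which the proposition gets applied later (e.g.\ in Theorem 5.5), and it keeps the whole paper's machinery uniform. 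One small implicit step on your side is checking that the max metric is complete and induces the product topology, and one on the paper's side is the decomposition of a dense $G_\delta$ into dense open sets; both are routine.
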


\begin{proof}
Obviously, $X_1\times\dots\times X_n$ is a complete metric space. 
If $A_i= \bigcap\limits_{k=1}^{\infty} A_{i,k}$, where $A_{i,k}$ are dense and
open subsets of $X_i$ for $i=1,\dots,n$, $k=1,2,\dots$, then $A_1\times\dots \times A_n=
\bigcap\limits_{k=1}^{\infty} (A_{1,k}\times\dots\times A_{n,k})$, where
$A_{1,k}\times\dots\times A_{n,k}$ are open and dense subsets of $X_1,\dots,X_n$. 
Baire's theorem completes the proof.
\end{proof}

\begin{remark}
The result of Proposition $5.1$ can easily be extended to infinite denumerable products, but we will not use it in the current paper.
\end{remark}

\begin{defi}
Let $n \in\{1,2,\dots\}$ and $\gamma_i : I_i \rightarrow \mathbb{C}$, $i=1,\dots,n$ be 
continuous and locally injective curves, where $I_i$ are intervals. We define the space
$C^{p_1,\dots,p_n}(\gamma_1,\dots,\gamma_n)=C^{p_1}(\gamma_1)\times\dots\times C^{p_n}(\gamma_n)$,
where $p_i\in\{0,1,2,\dots\}\cup\{\infty\}$ for $i=1,\dots,n$. The space 
$C^{p_1,\dots,p_n}(\gamma_1,\dots,\gamma_n)$ is endowed with the product topology and becomes
a complete metric space.
\end{defi}

We can regard the above space as the class of functions $f$, which are defined on the 
disjoint union $\gamma_1^*\cup\dots\cup\gamma_n^*$ of the locally injective curves $\gamma_1,\dots,\gamma_n$, where $f|_{\gamma_i}$ belongs to $C^{p_i}(\gamma_i)$.

\begin{defi}
Let $n \in\{1,2,\dots\}$ and $\gamma_i : I_i \rightarrow \mathbb{C}$, $i=1,\dots,n$, be 
locally injective curves, where $I_i$ are intervals. 
A function $f$ defined on the disjoint union $\gamma_1^*\cup\dots\cup\gamma_n^* $ for
$z\in \gamma_i^*$
belongs to the class of nowhere holomorphically extendable functions if the restriction of $f$ on $\gamma_i^*$
belongs to the class of nowhere holomorphically extendable functions defined on
$\gamma_i^*$, respectively, for every $i=1,\dots,n$.
\end{defi}

\begin{theorem}
Let $n \in\{1,2,\dots\}$, $p_i,q_i \in\{0,1,\dots\}\cup \{\infty\}$ such that $p_i\leq q_i$ for 
$i=1,\dots,n$ . Let also $\gamma_i:I_i \rightarrow \mathbb{C}$, $i=1,\dots,n$, be locally 
injective functions of $C^{q_i}(I_i)$, where $I_i$ are intervals. The class of 
nowhere homolomorphically extendable functions of $C^{p_1,\dots,p_n}(\gamma_1,\dots,
\gamma_n)$ is a dense and $G_\delta$ subset of $C^{p_1,\dots,p_n}(\gamma_1,\dots,\gamma_n)$.
\end{theorem}

\begin{proof}
Let $A_i$ be the class of nowhere holomorphically extendable functions of 
$C^{p_i}(\gamma_i)$. Then the set $A_1\times\dots\times A_n$ coincides with
the class of nowhere holomorphically extendable functions of $C^{p_1,\dots,p_n}
(\gamma_1,\dots,\gamma_n)$. It follows from Theorem $4.9$ that the sets $A_1,\dots,A_n$ are dense 
and $G_\delta$ subsets of $C^{p_1}(\gamma_1),\dots, C^{p_n}(\gamma_n)$, respectively, which
combined with Proposition $5.1$ implies that the class $A_1\times\dots\times A_n$
is a dense and $G_\delta$ subset of $C^{p_1,\dots,p_n}(\gamma_1,\dots,\gamma_n)$.
\end{proof}

\begin{defi}
Let $n \in\{1,2,\dots\}$. Let also $\gamma_i : I_i \rightarrow \mathbb{C}$, $i=1,\dots,n$, be 
locally injective curves, where $I_i$ are intervals. A function 
$f$ on the disjoint union $\gamma_1^*\cup\dots\cup\gamma_n^* $, $f(z)=f_i(z)$ for $z\in
\gamma_i^*$, is nowhere real analytic if the functions $f_i$ are nowhere real analytic for 
$i=1,\dots,n$.
\end{defi}

The proof of the following theorem is similar to the proof of Theorem $5.5$.

\begin{theorem}
Let $n \in\{1,2,\dots\}$ and $p_i \in\{0,1,\dots\}\cup \{\infty\}$, $i=1,\dots,n$. Let also
$\gamma_i :I_i \rightarrow \mathbb{C}$ be analytic curves, where $I_i$ are intervals,  
$\Phi_i:\gamma_i^* \rightarrow \mathbb{C}$ be homeomorphisms of $\gamma_i^*$ on 
$\Phi_i(\gamma_i)\subset \mathbb{C}$ and let $\delta_i=\Phi_i\circ\gamma_i$, 
$i=1,\dots,n$. 
The class of nowhere analytic functions $f\in C^{p_1,\dots,p_n}(\delta_1,\dots,\delta_n)$ is a 
dense and $G_\delta$ subset of $C^{p_1,\dots,p_n}(\delta_1,\dots,\delta_n)$.
\end{theorem}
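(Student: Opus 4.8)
The plan is to mirror the proof of Theorem 5.5, replacing the single-curve non-extendability input (Theorem 4.9) with its real-analyticity counterpart (Proposition 4.13) and exploiting the product structure of the space. Recall from Definition 5.3 that $C^{p_1,\dots,p_n}(\delta_1,\dots,\delta_n)=C^{p_1}(\delta_1)\times\cdots\times C^{p_n}(\delta_n)$, endowed with the product topology, is a complete metric space, so Baire's theorem is available and Proposition 5.1 applies.

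First I would treat each factor separately. For a fixed $i$, the curve $\gamma_i$ is analytic and $\Phi_i:\gamma_i^*\to\mathbb{C}$ is a homeomorphism with $\delta_i=\Phi_i\circ\gamma_i$; this is exactly the hypothesis of Proposition 4.13. Hence the set $A_i$ of functions $f_i\in C^{p_i}(\delta_i)$ that are nowhere analytic is a dense and $G_\delta$ subset of $C^{p_i}(\delta_i)$. Concretely, $A_i=S_i(B_i)$, where $S_i(g)=g\circ\Phi_i^{-1}$ is the surjective isometry of Proposition 4.13 and $B_i\subset C^{p_i}(\gamma_i)$ is the nowhere-real-analytic class produced by Theorem 4.11; since $S_i$ is an isometry onto that preserves nowhere analyticity, $A_i$ inherits the dense $G_\delta$ property.

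Next I would identify the target class with the product. By the componentwise definition of nowhere analyticity for functions on the disjoint union (the analog of Definition 5.6 with each $\gamma_i$ replaced by $\delta_i$), a function $f=(f_1,\dots,f_n)$ in $C^{p_1,\dots,p_n}(\delta_1,\dots,\delta_n)$ is nowhere analytic if and only if every component $f_i$ is nowhere analytic; that is, the class of nowhere analytic functions is precisely $A_1\times\cdots\times A_n$. Applying Proposition 5.1 to the dense $G_\delta$ sets $A_i\subset C^{p_i}(\delta_i)$ then yields that $A_1\times\cdots\times A_n$ is a dense $G_\delta$ subset of the product, which is the assertion.

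I do not expect a serious obstacle; the only points requiring care are bookkeeping rather than substance. The first is checking that ``nowhere analytic'' on $\delta_i^*$ is the correct transported notion, i.e.\ that it is exactly the image under the isometry $S_i$ of the nowhere-real-analytic class on the analytic curve $\gamma_i$, so that Proposition 4.13 applies verbatim. The second is verifying that this property factorizes over the disjoint union, so that the target class is genuinely a product; this is immediate from the definition. Once these identifications are in place, the conclusion is a direct combination of Proposition 4.13 and Proposition 5.1, exactly as Theorem 5.5 combines Theorem 4.9 with Proposition 5.1.
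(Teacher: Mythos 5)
Your proposal is correct and follows exactly the route the paper intends: the paper's proof of this theorem is simply the remark that it is ``similar to the proof of Theorem 5.5,'' i.e.\ apply Proposition 4.13 to each factor $C^{p_i}(\delta_i)$, identify the nowhere analytic class with the product $A_1\times\cdots\times A_n$ via the componentwise definition, and conclude with Proposition 5.1. Your additional bookkeeping (the isometries $S_i$ and the factorization over the disjoint union) is precisely the content being transported, so nothing is missing.
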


From now on, we will consider that $ p_1=p_2= \cdots =p_n $. As we did for the spaces 
$ C^{p_1,\dots,p_n} $, we will prove analogue generic results in the space $A^p(\Omega)$, 
where 
$\Omega$ is a planar domain bounded by the disjoint Jordan curves $\gamma_1,\dots,\gamma_n$,
$n\in\{1,2,\dots\}$. More specifically, we will define the following spaces: 

\begin{defi}
Let $p\in\{0,1,\dots\}\cup\{\infty\}$ and let $\Omega$ be a bounded domain in $\mathbb{C}$.
A function $f$ belongs to the class $A^p(\Omega)$ if it is 
holomorphic on $\Omega$ and every derivative 
$f^{(j)}$ can be continuously extended on $\overline{\Omega}$ for every
$j\in\{0,1,\dots\}$, $j\leq p$. The space $A^p(\Omega)$ is endowed with the topology of 
uniform convergence on $\overline{\Omega}$ of every derivative $f^{(j)}$ for all
$j\in\{0,1,\dots\}$, $j\leq p$ and becomes a complete metric space.
\end{defi}

\begin{remark}
In particular cases it is true that $A^p(\Omega)$ is included in $C^p(\partial\Omega)$ as
a closed subset. We will not examine now under which more general sufficient conditions this
remains true.
\end{remark}

\begin{remark}
If $ \Omega $ is arbitrary open set in $ \mathbb{C} $ (possibly unbounded), then a holomorphic function $f: \Omega \rightarrow \mathbb{C}$ belongs to the 
class $A^p(\Omega)$ if for every $ j\in \{0,1,2, \cdots\}, j\leq p $ the derivative $ f^{(j)} $ has a continuous extension from $ \Omega $ to its closure $ 
\overline{\Omega} $ in $ \mathbb{C} $. The topology of $ A^p(\Omega) $ is defined by the seminorms $ \sup\limits_{z \in \overline{\Omega}, |z|\leq n} 
|f^{(l)}(z)|, l\in \{0,1,2, \cdots\}, l\leq p, n \in \mathbb{N}$.
\end{remark}

\begin{defi}
Let $\Omega$ be a bounded domain in $\mathbb{C}$ defined by a finite number of
disjoint Jordan curves and $z_0 \in \partial \Omega$. A continuous function 
$f: \overline{\Omega}\rightarrow \mathbb{C}$
belongs to the class of non-holomorphically extendable at $z_0$ functions in the sense of
Riemann surfaces if there do not exist open disks $D(z_0,r)$, $r>0$ and $D(z_1,d)$,
$z_1 \in D(z_0,r)\cap \Omega$, $d>0$ such that $D(z_1,d)\subseteq D(z_0,r)\cap \Omega$,
and a holomorphic function $F:D(z_0,r) \rightarrow \mathbb{C}$ such that 
$F|_{D(z_1,d)}=f|_{D(z_1,d)}$.
\end{defi}

\begin{theorem}
Let $p\in \{0,1,\dots\}\cup \{\infty\}$ and $\Omega$ be a bounded domain in $\mathbb{C}$ 
defined by a finite number of
disjoint Jordan curves. Let also $z_0 \in \partial \Omega$. The class of non-holomorphically 
extendable at $z_0$ functions of $A^p(\Omega)$ in the sense of Riemann surfaces is a dense
and $G_\delta$ subset of $A^p(\Omega)$.
\end{theorem} 

\begin{proof}
Let $M>0$, $r>0$, $z_1 \in D(z_0,r)\cap \Omega$ and $d>0$ such that 
$D(z_1,d)\subset D(z_0,r)\cap \Omega$. Let 
also $ A(p,\Omega,z_0,r,z_1,d,M) $ be the set of $A^p(\Omega)$ functions $f$ for which 
there exists a holomorphic function $F$ on $ D(z_0,r)$, such 
that $ |F(z)|\leq M $ for every $z\in D(z_0,r)$ and $F|_{D(z_1,d)}=f|_{D(z_1,d)}$. We will 
first show
that the class $ A(p,\Omega,z_0,r,z_1,d,M) $ is a closed subset of $A^p(\Omega)$ with 
empty interior.

Let $ (f_n)_{n\geq 1} $ be a sequence in $ A(p,\Omega,z_0,r,z_1,d,M) $ converging in the 
topology of $A^p(\Omega)$ to 
a function $ f $ of $A^p(\Omega)$. This implies that $f_n$ converges uniformly on 
$\overline{\Omega}$ to f. Then for $ n=1,2, \dots $ there are holomorphic  
functions $ F_n:D(z_0,r)\rightarrow\mathbb{C} $ bounded by $ M $ such that $ F_n|_{D(z_1,d)
}=f_n|_{D(z_1,d)} $. By Montel's theorem there is a subsequence $ (F_{k_n}) $ 
of $ (F_n) $ which converges uniformly on the compact subsets of
$ D(z_0,r) $ to a function $ F $ which is holomorphic and bounded by $ M $ on $ D(z_0,r) $. 
Since
$ F_{k_n}$ converges to $f$ on $ D(z_1,d)$ we have that $ F|_{D(z_1,d)}=f|
_{D(z_1,d)} $ and so $ f \in A(p,\Omega,z_0,r,z_1,d,M)$. Therefore
$A(p,\Omega,z_0,r,z_1,d,M)$ is a closed subset of $A^p(\Omega)$.

In addition if $A(p,\Omega,z_0,r,z_1,d,M)$ has non-empty interior, then there exist 
$f\in A(p,\Omega,z_0,r,z_1,d,M)$, $l\in\{0,1,2,\dots\}$ and $\epsilon>0$ such that
\begin{align*}
\{g \in A^p(\Omega ): \sup \limits_{z \in \overline{\Omega} }\left| f^{(j)}(z)-g^{(j)}(z) 
\right| < \epsilon,\\ 0\leq j \leq l \}\subset A(p,\Omega,z_0,r,z_1,d,M).
\end{align*} 
We choose 
$w\in D(z_0,r)\setminus\overline{\Omega}$ and 
$ 0<\delta<\epsilon \min\{
\inf \limits_{z\in \overline{\Omega}}|z-w|,\inf \limits_{z\in \overline{\Omega}}|z-w|^2,\dots, 
\dfrac{1}{l!}\inf \limits_{z\in \overline{\Omega}}|z-w|^{l+1} \}.$
This is possible because 
$w \not\in \overline{\Omega}$. The function $h(z)=f(z)+\dfrac{\delta}{2(z-w)} $ 
belongs to $A(p,\Omega,z_0,r,z_1,d,M)$ and has a 
holomorphic and bounded extension $H$ on $ D(z_0,r)$. 
However, there is a holomorphic function $F:D(z_0,r)\rightarrow\mathbb{C}$ which 
coincides with $f$ on $D(z_1,d)$. By analytic continuation
$H(z)=F(z)+ \dfrac{\delta}{2(z-w)} $ for every $ z\in D(z_0,r)\setminus \{z_0\}$, since they are 
equal on $D(z_1,d) $. As a result $H$ is not bounded at $D(z_0,r)$ which yields the desired contradiction. Thus $A(p,\Omega,z_0,r,z_1,d,M)$ has empty interior.

Next let us consider $B$ be the set of $(r,z,d,M)$, where $r=\dfrac{1}{n}$, 
$d=\dfrac{1}{m}$ for some $n,m\in\{1,2,\dots\}$ for which there exists 
$z\in \mathbb{Q}+i\mathbb{Q}$ such that $D(z,d)\subset D(z_0,r)\cap \Omega$, and 
$M\in\{1,2,\dots\}$. Notice that there is a sequence $(b_n)$ such that $B=\{b_n:n\in\{1,2,\dots\}\}$.
Then the class of nowhere holomorphically 
extendable functions of $A^p(\Omega)$ coincides with the set 
$$ \bigcap \limits^{\infty}_{n=1}\left( A^p(\Omega) \backslash A\left(p,\Omega,z_0,b_n
 \right) \right) ,$$ because every holomorphic function on $D(z_0,r)$ 
becomes bounded when restricted on $D(z_0,r')$ for $r'<r$. Thus according to Baire's 
theorem the class of non-holomorphically extendable at $z_0$ functions 
of $A^p(\Omega )$ is a dense and $ G_\delta $ subset of $ A^p(\Omega ) $.
\end{proof}

\begin{defi}
Let $\Omega$ be a bounded domain in $\mathbb{C}$ defined by a finite number of
disjoint Jordan curves. A continuous function $f: \overline{\Omega}\rightarrow \mathbb{C}$ belongs to the class of nowhere holomorphically extendable functions in the sense of
Riemann surfaces if for every $z_0\in\partial \Omega$ $f$
belongs to the class of non-holomorphically extendable at $z_0$ functions in the sense of 
Riemann surfaces.
\end{defi}

\begin{theorem}
Let $p\in \{0,1,\dots\}\cup \{\infty\}$ and let $\Omega$ be a bounded domain in $\mathbb{C}$ 
defined by a finite number of
disjoint Jordan curves. The class of nowhere holomorphically 
extendable functions of $A^p(\Omega)$ in the sense of Riemann surfaces is a dense
and $G_\delta$ subset of $A^p(\Omega)$.
\end{theorem}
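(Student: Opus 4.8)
The plan is to deduce this from the single-point result, Theorem 5.12, by a countable-intersection argument together with Baire's theorem, exactly mirroring the way Theorem 4.9 was obtained from Theorem 4.7. First I would fix a sequence $(z_m)_{m\geq 1}$ that is dense in $\partial\Omega$; such a sequence exists because $\partial\Omega$ is a separable metric space. For each $m$ let $N_m$ denote the class of functions in $A^p(\Omega)$ that are non-holomorphically extendable at $z_m$ in the sense of Riemann surfaces, as in Definition 5.11. By Theorem 5.12 each $N_m$ is a dense $G_\delta$ subset of the complete metric space $A^p(\Omega)$, so by Baire's theorem the intersection $\bigcap_{m=1}^\infty N_m$ is again a dense $G_\delta$ subset.

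The heart of the matter is to identify this intersection with the class $N$ of nowhere holomorphically extendable functions of Definition 5.13. The inclusion $N\subseteq\bigcap_m N_m$ is immediate, since a function that is non-extendable at every boundary point is in particular non-extendable at each $z_m$. For the reverse inclusion I would argue by contraposition: if $f\notin N$, then $f$ is holomorphically extendable at some $z_0\in\partial\Omega$, and I claim that $f$ is then extendable at every boundary point sufficiently close to $z_0$; combined with the density of $(z_m)$ this produces an index $m$ with $f\notin N_m$, so $f\notin\bigcap_m N_m$.

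Thus the main obstacle, and the only step that is not a purely formal rearrangement, is this \emph{local stability of extendability}. Suppose $f$ is extendable at $z_0$, witnessed by disks $D(z_0,r)$ and $D(z_1,d)$ with $D(z_1,d)\subseteq D(z_0,r)\cap\Omega$ and a holomorphic $F:D(z_0,r)\to\mathbb{C}$ satisfying $F|_{D(z_1,d)}=f|_{D(z_1,d)}$. For a boundary point $z$ with $|z-z_0|$ small I would set $r'=r-|z-z_0|$, so that $D(z,r')\subseteq D(z_0,r)$, and keep the same interior disk $D(z_1,d')$ with a possibly smaller radius $d'$. A short triangle-inequality computation shows that once $|z-z_0|$ is below a threshold depending only on $r$, $|z_1-z_0|$ and $d$, one has $z_1\in D(z,r')\cap\Omega$ and $D(z_1,d')\subseteq D(z,r')\cap\Omega$; restricting $F$ to $D(z,r')$ then supplies the required holomorphic function, and $F|_{D(z_1,d')}=f|_{D(z_1,d')}$ still holds because it already held on the larger disk $D(z_1,d)$. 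Hence $f$ is extendable at $z$, which establishes the claim.

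Putting these together, $N=\bigcap_{m=1}^\infty N_m$, and since each $N_m$ is a dense $G_\delta$ subset of $A^p(\Omega)$ by Theorem 5.12, Baire's theorem shows that $N$ is a dense $G_\delta$ subset of $A^p(\Omega)$, as asserted. I expect the stability computation to be entirely routine, so the proof should be short once it is recorded; the conceptual content lies in recognizing that testing non-extendability along a dense set of boundary points already forces nowhere extendability.
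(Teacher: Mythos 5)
Your proposal is correct and follows essentially the same route as the paper: a dense sequence $(z_m)$ in $\partial\Omega$, the single-point result (Theorem 5.12) for each $z_m$, and Baire's theorem applied to $\bigcap_m N_m$. In fact you go slightly further than the paper, which merely asserts that $\bigcap_m N_m$ coincides with the nowhere extendable class; your triangle-inequality stability argument (extendability at $z_0$ forces extendability at all nearby boundary points, since $D(z,r-|z-z_0|)\subseteq D(z_0,r)$ and the witnessing interior disk can be shrunk) is exactly the justification that identification requires, and it is carried out correctly.
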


\begin{proof}
Let $z_l$, $l=1,\dots$ be a dense sequence of $\partial\Omega$. The class $A(z_l)$ of
non-holomorphically extendable at $z_l$ functions of $A^p(\Omega)$ in 
the sense of Riemann surface is a dense and $G_
\delta$ subset of $A^p(\Omega)$ from Theorem $5.12$. Notice that the set $\bigcap\limits_{l=1}^{\infty}A(z_l)$ coincides 
with the class of nowhere holomorphically extendable functions of $A^p(\Omega)$ in the sense of Riemann surfaces and from Baire's theorem is a dense and $G_\delta$ subset of $A^p(\Omega)$.
\end{proof}

\begin{remark}
In \cite{[5]} it has been also proved that the class of nowhere 
holomorphically extendable functions of $A^\infty(\Omega)$ in the sense of Riemann surfaces 
is a dense and $G_\delta$ subset of $A^\infty(\Omega)$. The method in \cite{[5]} comes from the
theory of Universal Taylor Series and is different from the method in the present paper.
\end{remark}

Now we will examine a different kind of extendability.

\begin{defi}
Let $\Omega$ be a bounded domain in $\mathbb{C}$ defined by a finite number of
disjoint Jordan curves $ \gamma_{1}, \cdots ,\gamma_{n}$. Let also 
$z_0 \in \partial \Omega$. A continuous function $f: \overline{\Omega}\rightarrow \mathbb{C}$ 
belongs to the class of non-holomorphically extendable at $z_0$ functions if there exist no 
pair of an open disk $D(z_0,r)$, $r>0$ and a holomorphic function 
$F :D(z_0,r) \rightarrow \mathbb{C}$ such that $F(z)=f(z)$ for every 
$z\in D(z_0,r)\cap\partial\Omega$. Otherwise we will say that $f$ is holomorphically 
extendable at $z_0$.
\end{defi}

\begin{remark}
If $ \gamma_{i}:I_i\rightarrow \mathbb{C} $, $I_i=[a_i,b_i]$, $a_i<b_i$ is continuous
and $\gamma_i(x)=\gamma_i(y)$ for $x,y\in [a_i,b_i]$ if and only if $x=y$ or 
$x,y\in \{a_i,b_i\}$, then we observe that a 
function $ f $ belongs to the 
class of non-holomorphically extendable at $z_0=\gamma_{i}(t_0), t_0\in I_i$ of Definition 
$5.16$ if and only if there are no open disk 
$D(z_0,r), r>0$ and $\eta >0$ and a holomorphic function $F:D(z_0,r)\rightarrow 
\mathbb{C}$, such that $\gamma_{i}((t_0-\eta,t_0+\eta)\cap I_i) \subset
D(z_0,r)$ and $ F(\gamma_{i}(t))=f(\gamma_{i}(t)) $ for all 
$t\in(t_0-\eta,t_0+\eta)\cap I_i$. This holds true because of the following observations:\\
\begin{enumerate}
\item For some constant $\eta >0$ we can find $ r>0 $ such that $ 
D(z_0,r)\cap \gamma_{i}^{*}\subseteq \gamma_{i}((t_0-\eta,t_0+\eta)\cap I_i)$.
This follows from the fact that the disjoint compact sets $\gamma_i[I_i 
\setminus(t_0-\eta,t_0+\eta)]$ and $\{z_0\}$ have a strictly positive distance. \\
\item For some constant $ r>0 $ we can find $\eta >0$ such that $\gamma_{i}
((t_0-\eta,t_0+\eta)\cap I_i) \subset D(z_0,r)$, because of the continuity of the map 
$\gamma_i$.
\end{enumerate}
\end{remark}

\begin{theorem}
Let $p\in \{0,1,\dots\}\cup \{\infty\}$ and $\Omega$ be a bounded domain in $\mathbb{C}$ 
defined by a finite number of disjoint Jordan curves. Let also $z_0 \in \partial \Omega$. The class of non-holomorphically extendable at $z_0$ functions of $A^p(\Omega)$ is a dense and $G_\delta$ subset of $A^p(\Omega)$.
\end{theorem}

\begin{proof}
Let $M>0$, $r>0$ and $A(p,\Omega,z_0,r,M)$ be the class of functions $f\in A^p(\Omega)$ for 
which there exist a holomorphic function $F: D(z_0,r) \rightarrow \mathbb{C}$ such that
$F|_{D(z_0,r)\cap\partial\Omega}=f|_{D(z_0,r)\cap\partial\Omega}$ and $|F(z)|\leq M$ for
$z\in D(z_0,r)$. We will show that this class is a closed subset of $A^p(\Omega)$ with
empty interior.

Similarly to the proof of Lemma $4.1$ the class $A(p,\Omega,z_0,r,M)$ is a closed subset of 
$A^p(\Omega)$.

If $A(p,\Omega,z_0,r,M)$ does not have empty interior, then there exist a function $f$ in the 
interior of $A(p,\Omega,z_0,r,M)$, a number $b\in \{0,1,2,\dots\}$ and $ \delta>0 $ such that 
\begin{align*}
\{g \in A^p(\Omega ): \sup \limits_{z \in \overline{\Omega} }\left| f^{(j)}(z)-g^{(j)}(z) 
\right| < \delta,\\ 0\leq j \leq b \}\subset A(p,\Omega,z_0,r,M).
\end{align*}
We choose $w\in D(z_0,r)\setminus \overline{\Omega}$ and $$ 0<a<\delta \min\{
\inf \limits_{z\in \overline{\Omega}}|z-w|,\inf \limits_{z\in \overline{\Omega}}|z-w|^2,\dots, 
\dfrac{1}{b!}\inf \limits_{z\in \overline{\Omega}}|z-w|^{b+1} \}.$$
This is possible because $w\not\in \overline{\Omega}$. Then similarly to the proof of 
Theorem $4.7$ we are led to a contradiction. Thus $A(p,\Omega,z_0,r,M)$ has empty interior.

Notice that the set $$\bigcap\limits_{M=1}^{\infty} \bigcap\limits_{n=1}^{\infty}\left(A^p(\Omega)
\setminus A\left(p,\Omega,z_0,\dfrac{1}{n},M\right)\right)$$ coincides with the class of 
non-holomorphically 
extendable at $z_0$ functions of $A^p(\Omega)$ and Baire's theorem implies that this set
is a dense and $G_\delta$ subset of $A^p(\Omega)$.
\end{proof}

\begin{defi}
Let $\Omega$ be a bounded domain in $\mathbb{C}$ defined by a finite number of
disjoint Jordan curves. A continuous function $f: \overline{\Omega}\rightarrow \mathbb{C}$
belongs to the class of nowhere holomorphically extendable functions if for every 
$z_0\in\partial \Omega$, $f$
belongs to the class of non-holomorphically extendable at $z_0$ functions.
\end{defi}

\begin{theorem}
Let $p\in \{0,1,\dots\}\cup \{\infty\}$ and let $\Omega$ be a bounded domain in $\mathbb{C}$ 
defined by a finite number of
disjoint Jordan curves. The class of nowhere holomorphically 
extendable functions of $A^p(\Omega)$ is a dense and $G_\delta$ subset of $A^p(\Omega)$.
\end{theorem}

\begin{proof}
The proof is similar to the proof of Theorem $4.9$, taking into account the statement of 
Theorem $5.18$.
\end{proof}

\begin{remark}
If the continuous analytic capacity of the boundary of $\Omega$ is zero,
then Definition $5.11$ implies Definition $5.16$.

Indeed, let  $\Omega$ be a bounded domain 
in $\mathbb{C}$ defined by a 
finite number of disjoint Jordan curves, such that the continuous analytic capacity of
$\partial\Omega$ is zero, and let $z_0\in\partial \Omega$. Let $f$ be a continuous function in 
$\overline{\Omega}$ which does 
not belong to the class of Definition $5.16$; That is there exist a
pair of an open disk $D(z_0,r)$, $r>0$ and a holomorphic function 
$F :D(z_0,r) \rightarrow \mathbb{C}$ such that $F(z)=f(z)$ for every 
$z\in D(z_0,r)\cap\partial\Omega$. We consider the function $G: D(z_0,r) \rightarrow 
\mathbb{C}$ 
$$ G(z) = \left\{
\begin{array}{ c l }
F(z),   &    z\in D(z_0,r) \setminus \Omega  \\
f(z),   &    z\in D(z_0,r) \cap  \Omega
\end{array}
\right. $$
Then $G$ is continuous on $D(z_0,r)$ and holomorphic on $D(z_0,r)\setminus \partial\Omega$. 
But the continuous analytic capacity of $\partial\Omega$ is zero.
From Theorem $3.6$ $G$ is holomorphic on $D(z_0,r)$ and since the set $D(z_0,r) \cap  \Omega$
is an open subset of $\mathbb{C}$ there exist $z_1\in D(z_0,r) \cap \Omega$ and $d>0$ such 
that $D(z_1,d)\subset D(z_0,r) \cap \Omega$. Obviously, $G$ coincides with $f$ on $D(z_1,d)$
and thus $f$ does not belong to the class of Definition $5.11$.
\end{remark}

Now as in section $4$, we will associate the phenomenon of non-extendability with that of
real analyticity on the spaces $A^p(\Omega)$.

\begin{defi}
Let  $n\in \{1,2,\dots\}$ and let $\Omega$ be a bounded domain in $\mathbb{C}$ defined
by disjoint Jordan curves $\gamma_1,\dots,\gamma_n$.
A function $f: \overline{\Omega}\rightarrow \mathbb{C}$ is real analytic at $(t_0,
\gamma_i(t_0))$, $\gamma_i(t_0)\in \gamma_i^*$, 
$i\in\{1,2,\dots,n\}$ if $f|_{\gamma_i}$ is real analytic at $(t_0,\gamma_i(t_0))$.
\end{defi}

At this point we observe that if $\Omega$ is a bounded domain in $\mathbb{C}$ defined
by disjoint Jordan curves $\gamma_1,\dots,\gamma_n$ and $f\in A^p(\Omega)$, then the 
analogous of Proposition $2.9$ under the above assumptions holds true, since nothing 
essential changes in its proof. So, we have the following proposition:

\begin{prop}
Let $p\in \{0,1,\dots\} \cup \{\infty\}$, $n\in \{0,1,\dots\}$ and let $\Omega$ be a bounded 
domain in $\mathbb{C}$ defined 
by disjoint analytic Jordan curves $\gamma_1,\dots,\gamma_n$. A continuous function $f: \overline{\Omega}
\rightarrow \mathbb{C}$ is real analytic at $(t_0,\gamma_i(t_0))$, 
$\gamma_i(t_0)\in \gamma_i^*$, $i\in\{1,2,\dots,n\}$ if and only if is holomorphically 
extendable at $\gamma_i(t_0)$.
\end{prop}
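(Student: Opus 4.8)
The plan is to reduce the statement to Proposition 2.9, which already establishes the equivalence of real analyticity and holomorphic extendability for a function living on a single analytic curve; the remark preceding the proposition asserts precisely that nothing essential changes, and the task is to make this transcription rigorous. Observe first that both notions in the statement are local at $z_0=\gamma_i(t_0)$ and depend only on the restriction $f|_{\gamma_i^*}$: real analyticity at $(t_0,\gamma_i(t_0))$ means, by Definition 5.22, that $f|_{\gamma_i}$ is real analytic at $(t_0,\gamma_i(t_0))$ in the sense of Definition 2.8, so $p$ plays no role and mere continuity of $f$ on $\overline{\Omega}$ suffices. The only genuine point is that the extendability of Definition 5.16 is phrased in terms of agreement of $F$ with $f$ on all of $D(z_0,r)\cap\partial\Omega$, whereas Proposition 2.9 uses the single-curve formulation of Definition 2.7. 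Since the images $\gamma_1^*,\dots,\gamma_n^*$ are pairwise disjoint compact sets, $z_0$ has strictly positive distance from $\bigcup_{j\neq i}\gamma_j^*$, so for all small enough $r$ one has $D(z_0,r)\cap\partial\Omega=D(z_0,r)\cap\gamma_i^*$; this localizes the problem to the single analytic Jordan curve $\gamma_i$.

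For the forward direction I would assume $f|_{\gamma_i}$ real analytic at $(t_0,z_0)$ and apply Proposition 2.9 to the analytic curve $\gamma_i$ to obtain a disk $D(z_0,\rho)$, an $\eta>0$, and a holomorphic $F$ with $\gamma_i((t_0-\eta,t_0+\eta)\cap I_i)\subset D(z_0,\rho)$ and $F(\gamma_i(t))=f(\gamma_i(t))$ on that parameter interval. To recover the stronger Definition 5.16 I would invoke observation 1) of Remark 5.17: the compact set $\gamma_i[I_i\setminus(t_0-\eta,t_0+\eta)]$ does not contain $z_0$ and hence lies at positive distance from it, so shrinking to some $r\le\rho$ that misses both this set and the other curves yields $D(z_0,r)\cap\partial\Omega=D(z_0,r)\cap\gamma_i^*\subseteq\gamma_i((t_0-\eta,t_0+\eta)\cap I_i)$. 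Every boundary point in this disk is then $\gamma_i(t)$ for an admissible $t$, whence $F=f$ there, which is exactly holomorphic extendability at $z_0$.

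For the converse I would start from a disk $D(z_0,r)$ and a holomorphic $F$ with $F=f$ on $D(z_0,r)\cap\partial\Omega$, hence $F=f$ on $D(z_0,r)\cap\gamma_i^*$; observation 2) of Remark 5.17 then produces, by continuity of $\gamma_i$, an $\eta>0$ with $\gamma_i((t_0-\eta,t_0+\eta)\cap I_i)\subset D(z_0,r)$, so that $F(\gamma_i(t))=f(\gamma_i(t))$ on the parameter interval. This is exactly the hypothesis of the reverse implication of Proposition 2.9 for $f|_{\gamma_i}$, which yields real analyticity of $f|_{\gamma_i}$ at $(t_0,z_0)$, i.e. of $f$ at $(t_0,\gamma_i(t_0))$ by Definition 5.22.

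The step I expect to demand the most care is the initial localization, that is, checking that near $z_0$ the full-boundary requirement of Definition 5.16 really collapses to the single-curve requirement of Definition 2.7; this is where disjointness and compactness of the curve images enter, and where Remark 5.17 bridges the parameter-interval and disk-intersect-boundary formulations. Once this matching is secured the conclusion is an immediate application of Proposition 2.9, exactly as the text anticipates.
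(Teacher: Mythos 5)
Your proposal is correct and takes essentially the same route as the paper: the paper justifies Proposition 5.23 merely by remarking that nothing essential changes in the proof of Proposition 2.9, and your localization argument (disjointness of the compact curve images plus the two observations of Remark 5.17 to pass between the $D(z_0,r)\cap\partial\Omega$ formulation of Definition 5.16 and the parameter-interval formulation of Definition 2.7) is precisely the bridge that makes that reduction rigorous.
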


\begin{defi}
Let $p\in \{0,1,\dots\} \cup \{\infty\}$, $n\in \{0,1,\dots\}$ and let $\Omega$ be a bounded 
domain in $\mathbb{C}$ defined by disjoint Jordan curves $\gamma_1,\dots,\gamma_n$.
A function $f\in A^p(\Omega)$ is nowhere real analytic if there exist no $i\in\{1,2,\dots,n\}$
and $\gamma_i(t_0)\in \partial\Omega$ such that $f$ is real analytic at
$(t_0,\gamma_i(t_0))$.
\end{defi}

Now combining Proposition $5.23$ with Theorem $5.20$ we obtain the following theorem.

\begin{theorem}
Let $p\in \{0,1,\dots\} \cup \{\infty\}$ and let $\Omega$ be a bounded domain in $\mathbb{C}$ 
defined
by a finite number of disjoint analytic Jordan curves. The class of nowhere real analytic 
functions of $A^p(\Omega)$ is a dense and $G_\delta$ subset of $A^p(\Omega)$.
\end{theorem}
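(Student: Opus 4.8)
The plan is to reduce the statement to Theorem 5.20, which already establishes that the class of nowhere holomorphically extendable functions of $A^p(\Omega)$ is a dense and $G_\delta$ subset of $A^p(\Omega)$. The bridge between the two phenomena is Proposition 5.23, and essentially the only work is to verify that, once the boundary is assumed analytic, the class of nowhere real analytic functions coincides \emph{as a subset of $A^p(\Omega)$} with the class of nowhere holomorphically extendable functions. No new Baire-category argument has to be run.

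First I would fix a boundary point $z_0=\gamma_i(t_0)\in\partial\Omega$ and unwind the definitions. By Definition 5.24 a function $f\in A^p(\Omega)$ is nowhere real analytic exactly when, for every $i\in\{1,\dots,n\}$ and every $t_0$, the restriction $f|_{\gamma_i}$ fails to be real analytic at $(t_0,\gamma_i(t_0))$. Since each $\gamma_i$ is an analytic Jordan curve, Proposition 5.23 applies at each such point and gives that $f$ is real analytic at $(t_0,\gamma_i(t_0))$ if and only if $f$ is holomorphically extendable at $\gamma_i(t_0)$. Negating over all $i$ and all $t_0$, $f$ is nowhere real analytic if and only if $f$ is nowhere holomorphically extendable.

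At this point I would pause to confirm that the notion of holomorphic extendability used in Proposition 5.23 (the curve-parametrized version of Definition 2.7, in which one extends $f\circ\gamma_i$ on a parameter subinterval about $t_0$) agrees with the one appearing in Theorem 5.20 (the boundary version of Definition 5.16, in which one extends $f$ so as to agree with it on $D(z_0,r)\cap\partial\Omega$). This is precisely the content of Remark 5.17: because each $\gamma_i$ is a Jordan curve, the two formulations are equivalent, the passage from one to the other using that a small parameter interval around $t_0$ captures all of $\partial\Omega$ near $z_0$ and, conversely, that a small disc $D(z_0,r)$ meets $\gamma_i^*$ only in the image of such an interval. With this identification in hand, the two classes are literally the same subset of $A^p(\Omega)$.

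Finally I would invoke Theorem 5.20 directly: the class of nowhere holomorphically extendable functions of $A^p(\Omega)$ is a dense and $G_\delta$ subset of $A^p(\Omega)$, and therefore so is the class of nowhere real analytic functions. The main (and really the only) obstacle is the bookkeeping that aligns the several notions of extendability and real analyticity across Definitions 2.7, 5.16 and 5.24 and Proposition 5.23; once these are matched via Remark 5.17 together with the analyticity hypothesis on $\partial\Omega$, the conclusion follows immediately from the already-proven generic result.
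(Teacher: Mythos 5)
Your proposal is correct and takes essentially the same route as the paper: the paper derives this theorem precisely by combining Proposition 5.23 with Theorem 5.20, exactly as you do. Your extra bookkeeping that reconciles the parametrized notion of extendability (Definition 2.7) with the disc-based one (Definition 5.16) via Remark 5.17 is a detail the paper leaves implicit, and it does not alter the argument.
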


\begin{remark}
We recall that for an analytic Jordan curve $\gamma$ defined on $[0,1]$ there exist 
$0<r<1<R$ and a holomorphic injective function $\Phi :D(0,r,R)\rightarrow\mathbb{C}$,
such that $\gamma(t)=\Phi(e^{it})$, where $D(0,r,R)=\{z\in\mathbb{C}: r<|z|<R\}$. 
This yields a natural parametrization of the curve 
$\gamma^*$; the parameter $t$ is called a conformal parameter for the curve $\gamma^*$.
Theorem $5.25$ holds if each of the Jordan curves $\gamma_1,\dots,\gamma_n$ is parametrized by 
such a conformal parameter t. Naturally one asks if the same result holds for other 
parametrizations; for instance does Theorem $5.25$ remains true if each 
$\gamma_1,\dots,\gamma_n$ is parametrized by arc length? This was the motivation of \cite{[9]} 
and \cite{[10]}, where 
it is proved that arc length is a global conformal parameter for any analytic curve. Thus
Theorem $5.25$ remains also true if arc length is used as a parametrization for each analytic 
curve $\gamma_i$.
\end{remark}

\section{One sided extendability}

In this section we consider one sided extensions from a locally injective curve $ \gamma $. 
For instance, if $\gamma^*$ is homeomorphic  to $[0,1]$, one can find an open disc $D$ and 
an open arc $J$ of $ \gamma^{*} $ separating $ D $ to two components $D^{+}$ and $D^{-}$. 
Those are Jordan domains containing in their boundaries a subarc $J$ of  $ \gamma $. We will 
show that generically in $C^{k}(\gamma)$ every function $h$ cannot be extended to a function 
$F:\Omega\cup J\rightarrow \mathbb{C}$ continuous on $\Omega\cup J$ and holomorphic on $ 
\Omega $; where $\Omega=D^{+} $  or $\Omega=D^{-} $. That is the one sided extendability is a 
rare phenomenon in $C^{k}(\gamma)$, provided that $\gamma $ is of class at least $C^k$. In 
order to prove this fact we need the following lemmas, which are well known in algebraic
topology. We include their elementary proofs for the purpose of completeness.

\begin{lemma}
Let $\delta:[0,1]\rightarrow\mathbb{C}$ be a 
continuous and injective curve. Then the interior of $\delta^*$ in $\mathbb{C}$ is void.
\end{lemma}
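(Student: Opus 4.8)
The plan is to exploit the fact that $\delta$, being a continuous injection from the compact set $[0,1]$ into the Hausdorff space $\mathbb{C}$, is automatically a homeomorphism onto its image $\delta^*$. I would then argue by contradiction: if the interior of $\delta^*$ were nonempty, it would contain an open disc $U$, and pulling this disc back through $\delta$ would produce an open subset of the plane that is homeomorphic to an open interval, an object that cannot exist.

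First I would set $V=\delta^{-1}(U)$, which is open and nonempty in $[0,1]$; since the endpoints account for only two points, $V$ meets $(0,1)$, so I can fix $t_0\in V\cap(0,1)$ and then choose $\epsilon>0$ with $(t_0-\epsilon,t_0+\epsilon)\subset V$. Because $U\subseteq\delta^*$ we have $\delta(V)=U$, so the restriction $\delta|_V:V\to U$ is a homeomorphism; consequently $W=\delta((t_0-\epsilon,t_0+\epsilon))$ is open in $U$, hence open in $\mathbb{C}$, and is homeomorphic to the open interval $(t_0-\epsilon,t_0+\epsilon)$.

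The contradiction then comes from a cut-point count. Choosing $s_1<s_2<s_3$ in $(t_0-\epsilon,t_0+\epsilon)$ and setting $q=\delta(s_2)$, removal of $s_2$ separates $s_1$ from $s_3$ in the interval, so under the homeomorphism $\delta(s_1)$ and $\delta(s_3)$ lie in distinct connected components of $W\setminus\{q\}$; thus $W\setminus\{q\}$ is disconnected. On the other hand $W$ is a connected open subset of $\mathbb{C}$, and here I would invoke, with an elementary rerouting argument, the fact that deleting a single point from a connected open planar set leaves it connected: any two points of $W\setminus\{q\}$ are joined by a polygonal path in $W$, which can be perturbed inside a small disc about $q$ (contained in $W$ since $W$ is open) so as to avoid $q$ while staying in $W$. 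Hence $W\setminus\{q\}$ is connected, contradicting the previous sentence, and the interior of $\delta^*$ must be void.

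I expect the main obstacle to be precisely this last point, the planar statement that an open connected set minus a point stays connected, since it is the only place where the two-dimensionality of $\mathbb{C}$ against the one-dimensionality of $[0,1]$ is genuinely used, and it morally encodes invariance of domain; everything else is bookkeeping with the homeomorphism. I would take care to justify the rerouting directly, using openness of $W$ to find a full disc around $q$ inside it, rather than appealing to invariance of domain, in keeping with the stated aim of giving an elementary and self-contained proof.
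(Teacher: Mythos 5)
Your proof is correct, and while it rests on the same underlying tension as the paper's --- removing an interior point disconnects an arc but does not disconnect anything two-dimensional --- the mechanics are genuinely different. The paper works directly inside the disc it assumes is contained in $\delta^*$: it picks $t_s\in(0,1)$ whose image lies in that disc, notes that $\delta^*\setminus\{\delta(t_s)\}$ has exactly two components $V_1,V_2$ (the homeomorphic images of the two halves of $[0,1]$), checks that both components meet the disc because points of each accumulate at $\delta(t_s)$, and then partitions the punctured disc into the two nonempty, relatively closed pieces cut out by $V_1$ and $V_2$, contradicting the connectedness of a disc minus one interior point. You never puncture the disc itself; instead you transfer its openness to the curve: since $\delta$ restricted to $\delta^{-1}(U)$ is a homeomorphism onto the open set $U$, your $W=\delta((t_0-\epsilon,t_0+\epsilon))$ is open in $\mathbb{C}$, the disconnectedness of $W\setminus\{q\}$ is inherited for free from the interval via the homeomorphism, and the contradiction comes from your rerouting lemma that a connected open planar set minus a point remains connected. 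The trade-off: the paper needs only the trivially verified connectedness of a punctured disc, but must carry out the two checks that both arcs enter the disc and that the two pieces are relatively closed; you get the disconnection with no effort at all, at the price of proving the slightly more general punctured-domain lemma, and your route has the conceptual advantage of making the invariance-of-domain content of the statement explicit --- what you actually show is that no open subset of $\mathbb{C}$ can be homeomorphic to an interval.
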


\begin{proof}
We will prove the lemma by contradiction. Suppose that there exists an open disk 
$W\subset\delta^*$. Let $t_s$ be a point in the interior of $[0,1]$, such that 
$\delta(t_s)\in W$. Since the function
$\delta$ is a homeomorphism of $[0,1]$ on $\delta^*\subset \mathbb{C}$, then $\delta|_{[0,1] 
\setminus \{ t_s \}}$ is a homemorphism of $[0,1] \setminus \{ t_s \}$ on 
$\delta^*\setminus\{ \delta(t_s)\}\subset \mathbb{C}$. But the set
$[0,1] \setminus \{ t_s \}$ has two connected components and thus the set
$\delta^*\setminus\{ \delta(t_s)\}$ has two connected components $V_1,V_2$. 
Both $V_1,V_2$ intersect $W$, because
we can find points of both $V_1,V_2$ arbitrary close to the point $\delta(t_s)$ of the open
set $W$. Therefore $W\cap V_1 \neq \emptyset$, $W\cap V_2 \neq \emptyset$ and $W\setminus
\{\delta(t_s)\}=(W\cap V_1) \cup(W\cap V_2)$. Also, the sets $V_1, V_2$ are closed in the
relative topology of $\delta^*\setminus\{ \delta(t_s)\}\supset W\setminus\{ \delta(t_s)\}$.
It follows that the sets $W\cap V_1 , W\cap V_2$ are closed in the relative topology of
$W\setminus\{\delta(t_s)\}$. Consequently, the set $W\setminus\{\delta(t_s)\}$ is not 
connected, which is absurd, since it is an open disk without one of its interior points.
Hence the interior of $\delta^*$ in $\mathbb{C}$ is void.
\end{proof}

\begin{prop}
Let $\gamma:I\rightarrow\mathbb{C}$ be a continuous and locally injective curve. Then the interior of $\gamma^*$ in $\mathbb{C}$ is void.
\end{prop}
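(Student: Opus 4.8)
The plan is to reduce the locally injective case to the genuinely injective case covered by Lemma 6.1, and then to assemble the conclusion by a Baire category argument. The guiding observation is that a continuous injection from a compact interval into $\mathbb{C}$ is a homeomorphism onto its image, so Lemma 6.1 applies to every compact piece on which $\gamma$ is one-to-one.

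First I would use local injectivity to produce a countable decomposition of $I$ into compact intervals on each of which $\gamma$ is injective. For every $t\in I$ there is, by local injectivity, a relatively open subinterval of $I$ containing $t$ on which $\gamma$ is one-to-one. Since $\mathbb{R}$ is second countable, hence Lindel\"of, countably many of these relatively open subintervals already cover $I$, and each of them can be written as an at most countable increasing union of compact subintervals (handling open or half-open endpoints of $I$ by exhausting from inside). Collecting all these compact pieces yields a sequence $(K_m)_{m\ge 1}$ of compact intervals with $I=\bigcup_{m}K_m$ such that $\gamma|_{K_m}$ is injective for every $m$.

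Next I would apply Lemma 6.1 to each restriction. After an affine reparametrization carrying $K_m$ onto $[0,1]$, the map $\gamma|_{K_m}$ becomes a continuous and injective curve on $[0,1]$, so Lemma 6.1 gives that $\gamma(K_m)$ has empty interior in $\mathbb{C}$. Moreover $\gamma(K_m)$ is the continuous image of a compact set, hence compact and therefore closed; a closed set with empty interior is nowhere dense, so each $\gamma(K_m)$ is nowhere dense. Consequently $\gamma^*=\bigcup_{m}\gamma(K_m)$ is a countable union of nowhere dense sets, that is, a set of first category in the complete metric space $\mathbb{C}$. Since $\mathbb{C}$ is a Baire space, a set of first category cannot contain a nonempty open set, and therefore the interior of $\gamma^*$ is void.

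The only delicate step is the covering argument: I must ensure that local injectivity genuinely upgrades to a \emph{countable} family of compact intervals on which $\gamma$ is injective, with the boundary behaviour of $I$ absorbed by the Lindel\"of reduction rather than treated case by case. Once that countable decomposition is secured, the remainder is a routine combination of Lemma 6.1 with the Baire category theorem, and no further estimates are needed.
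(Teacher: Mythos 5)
Your proof is correct and follows essentially the same route as the paper: decompose $I$ into countably many compact intervals on which $\gamma$ is injective, apply Lemma 6.1 to each piece, and conclude via Baire's theorem that the countable union of nowhere dense sets $\gamma(K_m)$ has empty interior. Your Lindel\"of covering argument in fact justifies the countable decomposition more carefully than the paper, which simply asserts the existence of such a partition.
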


\begin{proof}
Let $t_i\in I$ be such that $\gamma|_{[t_i,t_{i+1}]}$ is injective, where $i$ varies on a 
finite or infinite denumerable set $ S\subset \mathbb{Z} $ and
$$\bigcup_{n\in S}[t_n,t_{n+1}]=I.$$
From Lemma 6.1, the interior of every $\gamma_{[t_i,t_{i+1}]}^*$ is 
void in $\mathbb{C}$. Also, the set $$\bigcup_{n\in S}\gamma_{[t_n,t_{n+1}]}^*$$
coincides with $\gamma^*$ and Baire's theorem implies that the interior of $\gamma^*$ is 
void in $\mathbb{C}$.
\end{proof}

Proposition $6.2$ implies the following.

\begin{corol}
Let $\gamma:I\rightarrow\mathbb{C}$ be a continuous and locally injective curve and $\Omega$ be a Jordan domain whose boundary contains an arc $\gamma([t_1,t_2])$, $t_1<t_2$, $t_1,t_2 \in I$ of 
$\gamma^*$. Then the set $\Omega\setminus\gamma^*$ is non-empty.
\end{corol}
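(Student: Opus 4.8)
The plan is to argue by contradiction and reduce the statement directly to Proposition 6.2. First I would recall that a Jordan domain $\Omega$ is, by definition, a nonempty open subset of $\mathbb{C}$, namely one of the two connected components of the complement of a Jordan curve. This is the only structural feature of $\Omega$ that the argument will use.

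Next I would suppose, towards a contradiction, that $\Omega\setminus\gamma^*=\emptyset$, which is equivalent to the inclusion $\Omega\subseteq\gamma^*$. Since $\Omega$ is open and nonempty, this inclusion would exhibit a nonempty open subset of $\mathbb{C}$ contained in $\gamma^*$, and hence the interior of $\gamma^*$ in $\mathbb{C}$ would be nonempty. But Proposition 6.2 asserts precisely that the interior of $\gamma^*$ is void for any continuous and locally injective curve defined on an interval $I\subset\mathbb{R}$. This contradiction forces $\Omega\setminus\gamma^*\neq\emptyset$, as claimed.

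I expect no genuine obstacle here, since the entire content is carried by Proposition 6.2; the corollary is essentially a restatement of it once the definition of a Jordan domain is unwound. The only points deserving care are bookkeeping ones: making explicit that a Jordan domain is open and nonempty (so that the assumed containment in $\gamma^*$ genuinely produces interior points of $\gamma^*$), and observing that the hypothesis that $\partial\Omega$ contains the arc $\gamma([t_1,t_2])$ is not actually needed to derive this particular conclusion. That geometric hypothesis only serves to fix the configuration that will matter for the one-sided extendability constructions later in the section, so I would either omit it from the proof or remark that it plays no role in the present argument.
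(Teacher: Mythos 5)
Your proof is correct and is exactly the argument the paper intends: the paper simply states that Proposition 6.2 implies the corollary, and your contradiction argument (a nonempty open $\Omega$ contained in $\gamma^*$ would give $\gamma^*$ nonempty interior) is the only unwinding needed. Your side remark that the hypothesis on $\partial\Omega$ containing the arc plays no role in the deduction is also accurate; it merely sets up the configuration used later in Section 6.
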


Let $\gamma:I\rightarrow\mathbb{C}$ be a continuous and locally injective curve defined
on the interval $I\subset\mathbb{R}$. Naturally one asks if a Jordan domain as in 
Corollary $6.3$ exists. Our goal is to construct denumerably many such Jordan domains, so 
that every $ \Omega $, as in Corollary $6.3$, contains one of these domains and then use Baire's Category Theorem. 

Let $t_0$ in the interior $I^\circ$ of $I$ and $\Omega$ be a Jordan domain whose boundary contains an arc $\gamma([t_1,t_2])$, $t_1<t_0<t_2$. To ease notation let us assume that $t_1,t_2$ are rational numbers. Pick $s_1,s_2 \in (t_1,t_2)\cap \mathbb{Q}$ such that $s_1<t_0<s_2$. By definition the point $\gamma(s_1)$ does not belong to the compact set $\Omega_1:=\partial\Omega\setminus\gamma(t_1,t_0)$. As a result their distance $\delta:=dist(\gamma(s_1),\Omega_1)$ is positive. From Proposition $6.2$ there exists
$P\in (\mathbb{Q}+i\mathbb{Q})\cap D(\gamma(s_1),\delta/2)\cap \Omega$ with
$P\not\in\gamma([t_1,t_0])$. Notice that there is $r\in (t_1,t_0)$ such that $|\gamma(r)-P|=a$, where $a:=dist(P,\gamma([t_1,t_0]))\leq |P-\gamma(s_1)|<\delta/2$, because for every $z\in \Omega_1$ we have $$|z-P|\geq|z-\gamma(s_1)|-|\gamma(s_1)-P|> \delta-\delta/2=\delta/2.$$ By definition the segment $[P,\gamma(r)]$ intersects $\partial \Omega$ only at $\gamma(r)$. Similarly we pick $Q\in (\mathbb{Q}+i\mathbb{Q})\cap \Omega$ and $\widetilde{r}\in (t_0,t_2)$, such that the segment $[Q,\gamma(\widetilde{r})]$ intersects $\partial \Omega$ only at $\gamma(\widetilde{r})$. 

We distinguish two cases according to whether the segments $[P,\gamma(r)]$,
$[Q,\gamma(\widetilde{r})]$ intersect or not. If the segments $[P,\gamma(r)]$,
$[Q,\gamma(\widetilde{r})]$ intersect at a point $w$, then the
union of the segments $[w,\gamma(r)]$, $[w,\gamma(\widetilde{r})]$ and 
$\gamma[\widetilde{r},r]$ is the image of a Jordan curve, 
the interior of which is the desired Jordan domain.
If the segments $[P,\gamma(r)]$,
$[Q,\gamma(\widetilde{r})]$ do not intersect, then we consider a simple polygonal line 
(that is without self intersections) in
$\Omega$, which connects $P,Q$, the vertices of which belong to 
$\mathbb{Q}+i\mathbb{Q}$.
This is possible, since $\Omega$ is a domain (\cite{[11]}).
The union of the aforementioned polygonal line with the the segments $[P,\gamma(r)]$ and
$[Q,\gamma(\widetilde{r})]$ has possibly self-intersections. For this reason we consider one of the connected components of this simple polygonal line minus
the segments $[P,\gamma(r)]$,
$[Q,\gamma(\widetilde{r})]$ with the property that the closure of this connected component is a simple polygonal line connecting two points $z_1 \in [P,\gamma(r)]$, 
$z_2 \in [Q,\gamma(\widetilde{r})]$. Then the union of the last simple polygonal line
with the segments $[z_1,\gamma(r)]$, 
$[z_2,\gamma(\widetilde{r})]$ and the arc $\gamma[\widetilde{r},r]$ of 
$\gamma^*$ is the image of a Jordan curve, 
the interior of which is the desired Jordan domains. We notice that
$t_0\in [\widetilde{r},r]$ and that the constructed Jordan domains are denumerably many.

\begin{prop}
Let $\gamma:I\rightarrow\mathbb{C}$ be a locally injective map of class $C^l(I)$, $l\in
\{0,1,2,\dots\}\cup\{\infty\}$ and $0<M<\infty$. 
Let also $\Omega$ be a Jordan domain whose boundary contains an arc $\gamma([t_1,t_2])$, $t_1<t_2$, $t_1,t_2 \in I$ of 
$\gamma^*$ and $k\in\{0,1,2,\dots\}\cup\{\infty\}$, $k\leq l$. The set of functions 
$f\in C^k(\gamma)$ for which there exists a continuous function $F:\Omega\cup
\gamma((t_1,t_2))$, $\lVert F \rVert_{\infty}\leq M$, 
such that $F$ is holomorphic on $\Omega$ and $F|_{\gamma((t_1,t_2))}=f|_{\gamma((t_1,t_2))}$, 
is a closed subset of $C^k(\gamma)$ with empty interior.
\end{prop}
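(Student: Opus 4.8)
The plan is to prove, exactly as in Lemma 4.1 and Theorems 5.12 and 5.18, the two assertions separately: first that the set in question --- call it $A(M)$ --- is closed in $C^k(\gamma)$, and then that it has empty interior. Throughout write $J=\gamma((t_1,t_2))$ for the open boundary arc and fix, by the Osgood--Caratheodory theorem, a conformal map $\phi:D\to\Omega$ of the unit disc onto the Jordan domain $\Omega$ that extends to a homeomorphism $\overline D\to\overline\Omega$; then $\alpha=\phi^{-1}(J)$ is an open subarc of the unit circle $T$, hence of positive length.

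For closedness, let $f_n\in A(M)$ converge to $f$ in $C^k(\gamma)$ and let $F_n$ be the corresponding extensions, so each $F_n$ is holomorphic on $\Omega$, continuous on $\Omega\cup J$, bounded by $M$, with $F_n|_J=f_n|_J$. Setting $G_n=F_n\circ\phi\in H^\infty(D)$, $\|G_n\|_\infty\le M$, Montel's theorem furnishes a subsequence with $G_{n_k}\to G$ locally uniformly on $D$, where $G$ is holomorphic and $\|G\|_\infty\le M$; equivalently $F_{n_k}\to F:=G\circ\phi^{-1}$ locally uniformly on $\Omega$. The crux is to show that $F$ extends continuously to $\Omega\cup J$ with $F|_J=f|_J$. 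Here I would invoke the Poisson representation of $H^\infty$ functions: writing $G_{n_k}(z)=\int_T P(z,\zeta)G_{n_k}^*(\zeta)\,\tfrac{|d\zeta|}{2\pi}$ with boundary values $G_{n_k}^*\in L^\infty(T)$, $\|G_{n_k}^*\|_\infty\le M$, pass (after a further subsequence) to a weak-$*$ limit $G_{n_k}^*\to G^*$ in $L^\infty(T)=(L^1(T))^*$. Testing against $L^1$ functions supported in compact subarcs of $\alpha$, and using that $f_n\to f$ uniformly on compact subarcs of $J$ (the $j=0$ part of $C^k$-convergence), gives $G^*=f\circ\phi$ almost everywhere on $\alpha$, while taking the weak-$*$ limit inside the Poisson integral yields $G=P[G^*]$. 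Since $G^*$ agrees on $\alpha$ with the continuous function $f\circ\phi$, the classical fact that the Poisson integral of a bounded function converges to its value at each point of continuity shows that $G$ extends continuously across $\alpha$ with $G|_\alpha=f\circ\phi$. Transporting back by the homeomorphism $\phi$ gives $F\in A(M)$, so $A(M)$ is closed.

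For empty interior, suppose to the contrary that some $f$ lies in the interior of $A(M)$; then there are a compact $K\subset I$, an integer $b\ge 0$ and $\delta>0$ with $\{g:\sup_{t\in K}|(g\circ\gamma)^{(j)}(t)-(f\circ\gamma)^{(j)}(t)|<\delta,\ 0\le j\le b\}\subset A(M)$. By Corollary 6.3 the set $\Omega\setminus\gamma^*$ is nonempty, so I would fix $w\in\Omega\setminus\gamma^*$; then $t\mapsto(\gamma(t)-w)^{-1}$ lies in $C^k(\gamma)$ and $\inf_{t\in K}|\gamma(t)-w|>0$. Exactly as in the choice of $a$ in Theorems 4.7 and 5.18, pick $a>0$ small enough (depending on $\delta$ and the infima of $|\gamma(t)-w|^{j+1}$ over $K$) so that $h(z)=f(z)+\tfrac{a}{2(z-w)}$ lies in the above neighbourhood, hence in $A(M)$. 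Let $H$ be the bounded holomorphic extension of $h$ to $\Omega$ and $F$ that of $f$; then $\Phi=H-F$ is bounded and holomorphic on $\Omega$, continuous up to $J$, with $\Phi|_J=\tfrac{a}{2(z-w)}$. The aim is to reach a contradiction from the fact that $\tfrac{a}{2(z-w)}$ has a pole at the interior point $w$. I would transport to the disc: $\Phi\circ\phi\in H^\infty(D)$, while $R(\zeta)=\tfrac{a}{2(\phi(\zeta)-w)}$ is meromorphic on $D$ with a single simple pole at $w_0=\phi^{-1}(w)$; the function $(\zeta-w_0)\big(\Phi(\phi(\zeta))-R(\zeta)\big)$ then belongs to $H^\infty(D)$ and vanishes on the arc $\alpha$ of positive measure. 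By the boundary uniqueness theorem for $H^\infty$ (F.\ and M.\ Riesz) it is identically zero, forcing $\Phi\circ\phi=R$ on $D$; but $R$ has a pole at $w_0$ whereas $\Phi\circ\phi$ is bounded, a contradiction. Hence $A(M)$ has empty interior.

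The routine part is the perturbation argument for empty interior, which parallels the earlier proofs once the pole is placed at an interior point $w\in\Omega\setminus\gamma^*$ supplied by Corollary 6.3. The main obstacle is the closedness step: recovering the continuous boundary values of the Montel limit $F$ along $J$, for which the bounded holomorphic functions must be handled through their boundary data --- via Osgood--Caratheodory to reduce to the disc, weak-$*$ compactness in $L^\infty(T)$ to identify the limiting boundary function, and the Poisson integral to convert almost-everywhere agreement on $\alpha$ into a genuine continuous extension across the arc.
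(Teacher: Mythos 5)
Your proposal is correct, and its skeleton coincides with the paper's: transport to the unit disc via Osgood--Caratheodory, Montel's theorem applied to the transported extensions, and, for the empty-interior half, perturbation of $f$ by $\frac{a}{2(z-w)}$ with $w\in\Omega\setminus\gamma^*$ supplied by Corollary 6.3. Where you genuinely diverge is in the two crux steps. For closedness, the paper never passes to weak-$*$ limits of boundary functions: it shows directly that the functions $G_n=F_n\circ\Psi$ form a uniformly Cauchy sequence on each closed circular sector abutting a compact subarc of the arc, by splitting the Poisson integral of $g_n-g_m$ into the part over $|t|\leq\delta$ (small because $f_n\to f$ uniformly on compact subarcs) and the part over $\delta\leq|t|\leq\pi$ (small because the Poisson kernel is uniformly small there), then letting $r\to 1^-$; your route --- Poisson representation of $H^\infty$, Banach--Alaoglu to extract a weak-$*$ limit $G^*$, identification $G^*=f\circ\phi$ a.e.\ on $\alpha$ by testing against $L^1$ functions, and continuity of $P[G^*]$ at points of continuity of the boundary data --- is a softer functional-analytic version of the same computation, trading explicit kernel estimates for Fatou's theorem and weak-$*$ sequential compactness. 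For empty interior, the paper extends the difference function across the arc by the Schwarz reflection principle (it vanishes identically on the arc) and invokes the identity theorem, whereas you remove the pole by multiplying by $(\zeta-w_0)$ and invoke the Riesz uniqueness theorem for $H^\infty$ functions whose boundary values vanish on a set of positive measure; both are sound, yours resting on a deeper classical theorem, the paper's on more elementary tools. Two small points of care on your side: after modifying $G^*$ on a null subset of $\alpha$ you should note that $P[G^*]$ is unchanged, so the continuity property of the Poisson integral indeed applies; and your choice of $a$ must also absorb the constants coming from the derivatives of $\gamma$ on $K$ when differentiating $t\mapsto\frac{a}{2(\gamma(t)-w)}$ (a chain-rule point the paper's own proof glosses over equally).
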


\begin{proof}
Let $\Psi$ be a homeomorphism of $D\cup J\subset\mathbb{C}$
on $\Omega\cup \gamma(t_1,t_2)$, which is also holomorphic on 
$\Omega$, where $J=\{e^{it}:0<t<1 \}$. This is possible because of the Caratheodory-Osgood
theorem. Let also $A(k,\Omega,M)$ be the set of functions $ f\in C^k(\gamma) $ for which 
there exists a continuous function $F:\Omega\cup\gamma((t_1,t_2))\rightarrow \mathbb{C}$, $\lVert F \rVert_{\infty}\leq M$, 
such that $F$ is holomorphic on $\Omega$ and $F|_{\gamma((t_1,t_2))}=f|_{\gamma((t_1,t_2))}$.

First, we will prove that $A(k,\Omega,M)$ is a closed subset of $C^k(\gamma) $. 
Let $ (f_n)_{n\geq 1} $ be a sequence in $A(k,\Omega,M)$
converging in the topology of $C^k(\gamma) $ to a function $ f\in C^l(\gamma)$. This implies 
that $f_n$ converges uniformly on the compact subsets of $\gamma^*$ to $f$. Then for 
$ n=1,2, \dots $ there exist continuous functions $F_n: \Omega\cup\gamma((t_1,t_2))\rightarrow \mathbb{C}$, 
$\lVert F_n \rVert_{\infty}\leq M$, 
such that $F_n$ are holomorphic on $\Omega$ and 
$F_n|{ _{\gamma((t_1,t_2))}}=f_n|{_{\gamma((t_1,t_2))}}$. If $ G_n=F_n\circ\Psi $,
$ g_n=f_n\circ\Psi $ and $ g=f\circ\Psi $ for $ n=0,1,2, \dots $, it follows that $ g_n $ 
converges uniformly on the compact subsets of $J$ 
to $ g $. Also the functions $ G_n $ are 
holomorphic and bounded by $ M $ on $D$. By Montel's theorem, there exists a subsequence of 
$ (G_n) $, $ (G_{k_n}) $ which converges uniformly on the compact subsets of $ D $ to a 
function $ G $ which is holomorphic and bounded by $ M $ on $ D $. Without 
loss of generality, we assume that $ (G_n)=(G_{k_n}) $.
Now it is sufficient to prove that for any circular sector $K$, which has boundary 
$ [0,e^{ia}]\cup [0,e^{ib}] \cup \left\{ e^{it}: a\leq t\leq b \right\}$, $ 0<a<b<1 $, 
the sequence $ (G_n) $ converges uniformly on $K$, because then the limit of 
$ (G_n) $, which is equal to $ g $ at the arc $J$ and equal to $ G $ on the remaining part 
of the circular sector $K$, will be a 
continuous function. In order to do so we will prove that $(G_n)$ forms a
uniformly Cauchy sequence on $ K $. 
Since each $G_n$ is a bounded holomorphic function on $D$, we know that for every $ n $ 
the radial limits of $ G_n $ exist almost 
everywhere on the unit circle and so we can consider the respective functions $ g_n $ defined 
almost everywhere on the unit circle which 
are extensions of the previous $ g_n $. These $ g_n $ are also bounded by $ M $.\\
Let $ \varepsilon>0 $ be a positive number. For the Poisson kernel $P_r$, $0\leq r <1$ and 
for every 
$ n=0,1,2 \dots $, it holds that $G_n(re^{i\theta})=\dfrac{1}{2\pi}\int\limits_{-\pi}^{\pi}
P_r(t)g_n(\theta-t)dt $. We choose $ 0<\delta < min\{1-b,a\} $. There exists $ 0<r_0<1 $ 
such that $ \sup\limits_{\delta\leq |t|\leq \pi} P_r(t) < \frac{\varepsilon}{8M} $ for every 
$ r\in [r_0,1) $. Then $(G_n)$ is a
uniformly Cauchy sequence on $ K\cap \{z \in \mathbb {C}: |z|\leq r_0\}$, and thus there 
exists $ n_1 $ such that for every $ n,m\geq n_1 $, 
$$ \sup\limits_{z \in K\cap \{z \in \mathbb {C}: |z|\leq r_0\} }|G_n(z)-G_m(z)|<
\dfrac{\varepsilon}{2}   \qquad   (1)$$ 
In addition, as $ g_{n} $ converges uniformly to $ g $ on $ J $ there exists $ n_2 $, 
such that for every 
$ n,m\geq n_2 $, $ \sup\limits_{z \in J} |g_n(z)-g_m(z)|<\dfrac{\varepsilon}{4}$.
Consequently, for $ n,m\geq \max\{n_1,n_2\} $, for $ \theta \in [a,b] $ and for $ 1>r>r_0 $
$$ |G_n(re^{i\theta})-G_m(re^{i\theta})|\leq  \dfrac{1}{2\pi}\int\limits_{-\pi}^{\pi}P_r(t)|
g_n(\theta-t)-g_m(\theta-t)|dt=$$
$$\dfrac{1}{2\pi}\int\limits_{-\delta}^{\delta}P_r(t)|g_n(\theta-t)-g_m(\theta-t)|dt+$$
$$+\dfrac{1}{2\pi}\int\limits_{\delta\leq |t|\leq \pi}P_r(t)|g_n(\theta-t)-g_m(\theta-t)|dt
\leq 
$$
$$\dfrac{1}{2\pi}\int\limits_{-\delta}^{\delta}P_r(t)\sup\limits_{z \in J} |g_n(z)-g_m(z)|dt
+\dfrac{\sup\limits_{\delta\leq |t|\leq \pi} P_r(t)}{2\pi}\int\limits_{\delta\leq |t|\leq 
\pi}2Mdt\leq $$
$$\dfrac{\varepsilon}{4}\dfrac{1}{2\pi}\int\limits_{-\delta}^{\delta}P_r(t)dt+
\dfrac{\varepsilon}{16M\pi}\int\limits_{\delta\leq |t|\leq \pi}2Mdt\leq $$
$$\dfrac{\varepsilon}{4}\dfrac{1}{2\pi}\int\limits_{-\pi}^{\pi}P_r(t)dt+\dfrac{\varepsilon}
{4}=\dfrac{\varepsilon}{2} \qquad (2) .$$
By the continuity of the functions $G_n$ on $D\cup J$, 
making $r\rightarrow 1^-$ we find that
$$ |G_n(e^{i\theta})-G_m(e^{i\theta})| \leq \dfrac{\varepsilon}{2}, \qquad (3)$$ for all 
$\theta\in[a,b]$. It follows immediately from $ (1), (2) $ and $(3)$ that $ (G_n) $ is a uniformly Cauchy sequence on the circular sector
$K$ and thus the set $A(k,\Omega,M)$ is a closed subset of $ C^k(\gamma) $.

If $A(k,\Omega,M)$ does not have empty interior, then there exists a function $ f $ 
in the interior of $A(k,\Omega,M)$, a compact set $L\subset I $ and $ \delta>0 $ 
such that 
\begin{align*}
\{g \in C^k(\gamma ): \sup \limits_{t \in L}\left| (f\circ \gamma)^{(j)}(t)-(g\circ 
\gamma)^{(j)}(t) \right| < 
\delta,\\ 0\leq j \leq b \}\subset A(k,\Omega,M).
\end{align*}
From Corollary $6.3$, we can find a $w\in \Omega\setminus \gamma^*$. 
We choose $ 0<a<\delta \min\{
\inf \limits_{t\in K}|\gamma(t)-w|,\inf \limits_{t\in K}|\gamma(t)-w|^2,\dots, 
\dfrac{1}{b!}\inf \limits_{t\in K}|\gamma(t)-w|^{b+1} \}.$
This is possible because $w\not\in\gamma^*$ and $\gamma(K)\subset \gamma^*$. The function 
$ h(\gamma(t))=f(\gamma(t))+ \dfrac{a}{2(\gamma(t)-w)} $, $ t\in 
I$ belongs to $A(k,\Omega,M)$ and therefore has a continuous and bounded extension $ H $ on
$\Omega\cup\gamma((t_1,t_2))$ with $ H|
_{\gamma((t_1,t_2))}=h|_{\gamma((t_1,t_2))} $ which is holomorphic on $\Omega$. 
Then the function $H\circ \Psi$ is continuous and bounded on $D\cup J$ and holomorhic
on $D$. We can easily see 
that $ H(\Psi(z))=F(\Psi(z))+ \dfrac{a}{2(\Psi(z)-w)} $ for 
$z\in D \setminus \{\Psi^{-1}(w)\}$. Indeed, let $ \Phi(z)=H(\Psi(z))-F(\Psi(z))- \dfrac{a}{2(\Psi(z)-w)} $. Then $ \Phi|_J=0 $ and by Schwarz Reflection 
Principle $\Phi$ is extended holomorphically on 
$$(D\cup J\cup(\mathbb{C}\setminus\overline{D}))\setminus \{\Psi^{-1}(w),\dfrac{1}
{\overline{\Psi^{-1}(w)}}\}.$$  Therefore because $ \Phi=0 $ on $J$, by analytic 
continuation, $ H(\Psi(z))-F(\Psi(z))- \dfrac{a}{2(\Psi(z)-w)}=0$ on 
$(D\cup J)\setminus \{\Psi^{-1}(w)\}$. As a result $ H\circ \Psi $ 
is not bounded on $D$ which is absurd. Thus $A(k,\Omega,M)$ has empty interior.
\end{proof}

\begin{defi}
Let $\gamma:I\rightarrow\mathbb{C}$ be a locally injective map and $t_0\in I^\circ$, where $I^\circ$ is the interior of $I$ in $\mathbb{R}$. A function $f:\gamma^*\rightarrow\mathbb{C}$ is non one-sided holomorphically extendable at $(t_0,\gamma(t_0))$ if there exists
no pair of a Jordan domain $\Omega$, such that 
$\partial \Omega$ contains an arc of 
$\gamma^*$, $\gamma([t_1,t_2])$, $t_1<t_0<t_2$, $t_1,t_2 \in I$ and a continuous function 
$F:\Omega\cup\gamma((t_1,t_2))$, 
which is holomorphic on $\Omega$ and $F|_{\gamma((t_1,t_2))}=f|_{\gamma((t_1,t_2))}$.
\end{defi}

\begin{theorem}
Let $\gamma:I\rightarrow\mathbb{C}$ be a locally injective map of class $C^l(I)$, $l\in
\{0,1,2,\dots\}\cup\{\infty\}$ and $k\in\{0,1,2,\dots\}\cup\{\infty\}$, $k\leq l$. 
Consider $t_0\in I^\circ$, where $I^\circ$ is
the interior of $I$ in $\mathbb{R}$. The class of non
one sided holomorphically extendable at $(t_0,\gamma(t_0))$ functions of $C^k(\gamma)$
is a dense and $G_\delta$ subset of $C^k(\gamma)$.
\end{theorem}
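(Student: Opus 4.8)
The plan is to run a Baire category argument exactly parallel to the proofs of Theorems 4.7 and 5.18, using Proposition 6.4 as the basic closed-and-nowhere-dense building block, and using the countable family of Jordan domains $G_n$ constructed just before Proposition 6.4 to replace the uncountably many competing domains $\Omega$ by a single countable list.

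First I would recall that the family $(G_n)_{n\ge 1}$ was built so that each $G_n$ is a Jordan domain whose boundary contains an arc $\gamma([\tilde r_n,r_n])$ with $\tilde r_n<t_0<r_n$, and so that every Jordan domain $\Omega$ as in Definition 6.5 (that is, with $\partial\Omega\supseteq\gamma([t_1,t_2])$ and $t_1<t_0<t_2$) contains some $G_n$ with $[\tilde r_n,r_n]\subset(t_1,t_2)$; in particular $\overline{G_n}\subseteq\Omega\cup\gamma((t_1,t_2))$. For each $n$ and each positive integer $M$, let $A(k,G_n,M)$ be the set of Proposition 6.4. I claim that the class of non one sided holomorphically extendable functions at $(t_0,\gamma(t_0))$ equals
\[
\bigcap_{n=1}^{\infty}\bigcap_{M=1}^{\infty}\bigl(C^k(\gamma)\setminus A(k,G_n,M)\bigr).
\]

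The key step is to verify this set equality. If $f$ lies in the intersection but were one sided extendable via some $\Omega$ and a continuous $F:\Omega\cup\gamma((t_1,t_2))\rightarrow\mathbb{C}$, holomorphic on $\Omega$ with $F=f$ on $\gamma((t_1,t_2))$, then I would choose $G_n\subseteq\Omega$ as above. Since $\overline{G_n}$ is compact and contained in $\Omega\cup\gamma((t_1,t_2))$, the restriction of $F$ to $\overline{G_n}$ is bounded, say by the integer $M$, while its boundary arc $\gamma([\tilde r_n,r_n])\subseteq\gamma((t_1,t_2))$ means $F$ still agrees with $f$ on that arc; this forces $f\in A(k,G_n,M)$, a contradiction. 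Conversely, if $f$ is non-extendable, then membership in any $A(k,G_n,M)$ would itself produce a one sided extension on the Jordan domain $G_n$, which is impossible, so $f$ lies in every complement. This is the step I expect to be the main obstacle, since it is precisely where the cofinality of the family $G_n$ and the passage from an a priori possibly unbounded $F$ to a bounded restriction on a compact subset must be handled with care.

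Finally, by Proposition 6.4 each $A(k,G_n,M)$ is a closed subset of $C^k(\gamma)$ with empty interior, so each complement $C^k(\gamma)\setminus A(k,G_n,M)$ is open and dense in the complete metric space $C^k(\gamma)$ (a Banach or Fréchet space, by Definition 2.2). Baire's theorem then shows that the countable intersection displayed above is a dense $G_\delta$ subset of $C^k(\gamma)$, and by the set equality just established this intersection is exactly the class of non one sided holomorphically extendable functions at $(t_0,\gamma(t_0))$, which completes the proof.
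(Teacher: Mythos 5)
Your proposal is correct and follows essentially the same route as the paper: the same countable intersection $\bigcap_{n=1}^{\infty}\bigcap_{M=1}^{\infty}\bigl(C^k(\gamma)\setminus A(k,G_n,M)\bigr)$, the same appeal to Proposition 6.4 for closedness and empty interior, and the same Baire argument. The only difference is one of emphasis: where you cite the cofinality of the family $(G_n)$ as already established by the construction, the paper actually verifies it inside the proof of the theorem by re-running that construction with rational data ($t_3,t_4$, the points $P,Q$, and polygonal lines with rational vertices) inside the given domain $\Omega$, producing a member $G_{n_0}$ of the countable family with $\overline{G_{n_0}}\subset\Omega\cup\gamma([\widetilde r,r])$ --- exactly the strengthened containment (not mere inclusion $G_n\subseteq\Omega$) that you correctly identified as the crux for bounding $F$ on $\overline{G_{n_0}}$.
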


\begin{proof}
Let $\Omega$ be a Jordan domain, such that $\partial \Omega$ contains an arc $\gamma([t_1,t_2])$,  of $\gamma^*$, where $t_1<t_0<t_2$. $A(k,\Omega,M)$ denotes the set of functions 
$f\in C^k(\gamma)$ for which there exists a continuous function 
$F:\Omega\cup\gamma((t_1,t_2))$, which is holomorphic on $\Omega$, bounded by $M$ and $F|_{\gamma((t_1,t_2))}=f|_{\gamma((t_1,t_2))}$.
Let $G_n$, $n=1,2,\dots$ be the denumerably many Jordan domains constructed above.

From Proposition $6.4$, the sets $A(k,G_n, M)$ are closed subsets of
$C^k(\gamma)$ with empty interior. We will prove that the class 
of non one sided holomorphically extendable at $(t_0,\gamma(t_0))$ functions of $C^k(\gamma)$  
coincides with the set
$$\bigcap_{n=1}^\infty \bigcap_{M=1}^\infty(C^k(\gamma)\setminus A(k,G_n,M)),$$ 
and thus Baire's theorem
will imply that the above set is a dense and $G_\delta$ subset of $C^k(\gamma)$.

Obviously, the set 
$$\bigcap_{n=1}^\infty\bigcap_{M=1}^\infty(C^k(\gamma)\setminus A(k,G_n,M))$$ 
contains the class of non one-sided holomorphically extendable at $(t_0,\gamma(t_0))$ functions of $C^k(\gamma)$. Conversely, let $\Omega$ be a Jordan domain whose boundary contains an arc
$\gamma([t_1,t_2])$, $t_1<t_0<t_2$, $t_1,t_2\in I$. Let also $f\in C^k(\gamma)$, 
for which there exists a continuous function 
$F:\Omega\cup\gamma((t_1,t_2))$, 
which is holomorphic on $\Omega$ and $F|_{\gamma((t_1,t_2))}=f|_{\gamma((t_1,t_2))}$.
From the construction of the aforementioned Jordan domains $G_n$, we can find a Jordan domain $G_{n_0}$ such that $\overline{G_{n_0}}$ is contained in $\overline{\Omega}$.
It easily follows that $F|_{G_{n_0}}$ is bounded by some number $M=1,2,3,\dots$ and thus $f$ 
belongs to $A(k,G_{n_0},M)$. Therefore the class of non
one sided holomorphically extendable at $(t_0,\gamma(t_0))$ functions of $C^k(\gamma)$ 
is a subset of the set 
$$\bigcap_{n=1}^\infty\bigcap_{M=1}^\infty(C^k(\gamma)\setminus A(k,G_n,M)),$$
which combined with the above completes the proof.
\end{proof}

\begin{defi}
Let $\gamma:I\rightarrow\mathbb{C}$ be a locally injective map on the interval 
$I\subset\mathbb{R}$. A function $f:\gamma^*\rightarrow\mathbb{C}$ is nowhere
one-sided holomorphically extendable if there exists
no pair of a Jordan domain $\Omega$, such that 
$\partial \Omega$ contains an arc of 
$\gamma^*$, $\gamma([t_1,t_2])$, $t_1<t_2$, $t_1,t_2 \in I$ and a continuous function 
$F:\Omega\cup\gamma((t_1,t_2))$, 
which is holomorphic on $\Omega$ and $F|_{\gamma((t_1,t_2))}=f|_{\gamma((t_1,t_2))}$.
\end{defi}

\begin{theorem}
Let $\gamma:I\rightarrow\mathbb{C}$ be a locally injective map of class $C^l$, $l\in
\{0,1,2,\dots\}\cup\{\infty\}$ on the interval $I\subset\mathbb{R}$ and $k\in
\{0,1,2,\dots\}\cup\{\infty\}$, $k\leq l$. 
The class of nowhere one-sided holomorphically extendable functions of $C^k(\gamma)$
is a dense and $G_\delta$ subset of $C^k(\gamma)$.
\end{theorem}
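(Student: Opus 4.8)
The plan is to deduce this global statement from the local Theorem 6.6 by a standard Baire-category argument, exactly as Theorem 4.9 was obtained from Theorem 4.7 and Theorem 5.14 from Theorem 5.12. First I would fix a sequence $(t_n)_{n\geq 1}$ of points that is dense in the interior $I^\circ$ of $I$. For each $n$, Theorem 6.6 applies (since $t_n\in I^\circ$ and $k\leq l$) and asserts that the class $A_n$ of functions in $C^k(\gamma)$ that are non one sided holomorphically extendable at $(t_n,\gamma(t_n))$ is a dense and $G_\delta$ subset of $C^k(\gamma)$. As $C^k(\gamma)$ is a complete metric space (Definition 2.2), Baire's theorem then guarantees that the countable intersection $\bigcap_{n=1}^\infty A_n$ is again a dense and $G_\delta$ subset.

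The heart of the argument is to verify the set equality $\bigcap_{n=1}^\infty A_n=N$, where $N$ denotes the class of nowhere one sided holomorphically extendable functions of Definition 6.8. The inclusion $N\subseteq A_n$ for every $n$ is immediate from the definitions: being one sided holomorphically extendable at $(t_n,\gamma(t_n))$, that is over some arc $\gamma([t_1,t_2])$ with $t_1<t_n<t_2$, is a particular instance of being one sided holomorphically extendable over some nontrivial arc, so a function outside $A_n$ cannot lie in $N$. For the reverse inclusion I would argue by contraposition: if $f\notin N$, then there exist a Jordan domain $\Omega$ whose boundary contains an arc $\gamma([t_1,t_2])$, $t_1<t_2$, and a continuous $F:\Omega\cup\gamma((t_1,t_2))\rightarrow\mathbb{C}$, holomorphic on $\Omega$, with $F|_{\gamma((t_1,t_2))}=f|_{\gamma((t_1,t_2))}$.

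The key point is that the open interval $(t_1,t_2)$ is nonempty and contained in $I^\circ$, so by density it contains some $t_{n_0}$. The very same pair $(\Omega,F)$ then witnesses that $f$ is one sided holomorphically extendable at $(t_{n_0},\gamma(t_{n_0}))$, since $t_1<t_{n_0}<t_2$; hence $f\notin A_{n_0}$ and therefore $f\notin\bigcap_{n=1}^\infty A_n$. This establishes $\bigcap_{n=1}^\infty A_n\subseteq N$ and completes the identification, after which the conclusion is purely formal from the Baire statement already recorded.

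I do not expect a genuine obstacle here; the single subtlety worth flagging is that the distinguished sequence must be taken dense in $I^\circ$ rather than merely in $I$, because Theorem 6.6 requires its base point to be an interior point, and an endpoint of $I$ could never be caught strictly between the parameters $t_1,t_2$ of an extension arc. Since any arc $\gamma([t_1,t_2])$ arising from somewhere-extendability automatically has its open parameter interval $(t_1,t_2)$ inside $I^\circ$, density in $I^\circ$ is exactly what makes the contraposition go through, and the rest of the proof is a routine invocation of Baire's theorem.
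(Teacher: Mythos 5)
Your proposal is correct and follows essentially the same route as the paper: the paper likewise fixes a dense sequence $t_n\in I^\circ$, identifies the nowhere one sided extendable class with $\bigcap_{n=1}^\infty A_n$, and concludes by Theorem 6.6 together with Baire's theorem. Your write-up merely makes explicit the set identity (via the observation that any extension arc's open parameter interval $(t_1,t_2)$ lies in $I^\circ$ and hence contains some $t_{n_0}$), which the paper asserts without detail.
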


\begin{proof}
Let $t_n\in I^\circ$, $n=1,2,\dots$. Then the class of nowhere one sided holomorphically extendable functions of $C^k(\gamma)$ coincides with the intersection of the classes of non 
one sided holomorphically extendable at $(t_n,\gamma(t_n))$ functions of $C^k(\gamma)$,
which is from Theorem $6.6$ and Baire's theorem a dense and $G_\delta$ subset of $C^k(\gamma)$.
\end{proof}

Let $n \in\{1,2,\dots\}$ and  $\gamma_i : I_i \rightarrow \mathbb{C}$, $i=1,\dots,n$, be 
continuous and locally injective curves. We recall the definition of the space
$C^{p_1,\dots,p_n}(\gamma_1,\dots,\gamma_n)=C^{p_1}(\gamma_1)\times\dots\times C^{p_n}(\gamma_n)$,
where $p_i\in\{0,1,2,\dots\}\cup\{\infty\}$ for $i=1,\dots,n$. As we did in section $5$,
we will prove generic results for the spaces $C^{p_1,\dots,p_n}(\gamma_1,\dots,\gamma_n)$.

\begin{defi}
Let $n \in\{1,2,\dots\}$ and $\gamma_i : I_i \rightarrow \mathbb{C}$, $i=1,\dots,n$, be 
locally injective curves, where $I_i$ are intervals and $t_0$ a point of some $I_{i_0}$. A function $f$ on the disjoint union $\gamma_1^*\cup\dots\cup\gamma_n^* $, $f(z)=f_i(z)$ for $z\in
\gamma_i^*$, is non oned sided holomorphically extendable at $(t_0,\gamma_{i_0}(t_0))$ if 
the function $f_{i_0}$ defined on $ \gamma^*_{i_0} $ is 
non one sided holomorphically extendable at $(t_0,\gamma_{i_0}(t_0))$.
\end{defi}

\begin{theorem}
Let $n \in\{1,2,\dots\}$ and $p_i,q_i \in\{0,1,\dots\}\cup \{\infty\}$ such that $p_i\leq q_i$ for every $i=1,\dots,n$. Let also $\gamma_i:I_i \rightarrow \mathbb{C}$ be locally 
injective functions of $C^{q_i}(I_i)$, where $I_i$ are intervals, and $t_0$ a point of some $I_{i_0}$. The class of non one sided homolomorphically extendable at $(t_0,\gamma_{i_0}(t_0))$ functions belonging to $C^{p_1,\dots,p_n}(\gamma_1,\dots,\gamma_n)$ is a dense and $G_\delta$ subset of $C^{p_1,\dots,p_n}(\gamma_1,\dots,\gamma_n)$.
\end{theorem}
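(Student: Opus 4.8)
The plan is to reduce the statement to Theorem 6.6 by exploiting the product structure of $C^{p_1,\dots,p_n}(\gamma_1,\dots,\gamma_n)$. Recall from Definition 5.3 that this space equals $C^{p_1}(\gamma_1)\times\cdots\times C^{p_n}(\gamma_n)$, and by Definition 6.10 a function $f=(f_1,\dots,f_n)$ is non one sided holomorphically extendable at $(t_0,\gamma_{i_0}(t_0))$ if and only if its $i_0$-th coordinate $f_{i_0}$ is non one sided holomorphically extendable at $(t_0,\gamma_{i_0}(t_0))$ as a function on $\gamma_{i_0}^*$. Hence the target class is precisely the product
$$A_1\times\cdots\times A_n,$$
where $A_{i_0}$ is the class of non one sided holomorphically extendable at $(t_0,\gamma_{i_0}(t_0))$ functions of $C^{p_{i_0}}(\gamma_{i_0})$ and $A_j=C^{p_j}(\gamma_j)$ for every $j\neq i_0$.

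First I would invoke Theorem 6.6. Since $\gamma_{i_0}\in C^{q_{i_0}}(I_{i_0})$ and $p_{i_0}\leq q_{i_0}$ (and $t_0$ lies in the interior of $I_{i_0}$, as required for Definition 6.5 to apply), that theorem applies with the roles of $l$ and $k$ played by $q_{i_0}$ and $p_{i_0}$, and it shows that $A_{i_0}$ is a dense and $G_\delta$ subset of $C^{p_{i_0}}(\gamma_{i_0})$. Next I would observe that each remaining factor $A_j=C^{p_j}(\gamma_j)$ is trivially a dense $G_\delta$ subset of itself, being the countable intersection of the open and dense set $C^{p_j}(\gamma_j)$ with itself.

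Finally I would apply Proposition 5.1 to the complete metric spaces $X_j=C^{p_j}(\gamma_j)$ together with their dense $G_\delta$ subsets $A_j$, $j=1,\dots,n$: the product $A_1\times\cdots\times A_n$ is then a dense and $G_\delta$ subset of the product space $C^{p_1,\dots,p_n}(\gamma_1,\dots,\gamma_n)$, which is exactly the desired conclusion.

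I do not expect any genuine obstacle here; the theorem is a formal consequence of Theorem 6.6 combined with the stability of the dense $G_\delta$ property under finite products established in Proposition 5.1. The only point requiring (entirely routine) care is the set-theoretic identification of the target class with the product $A_1\times\cdots\times A_n$, and this is immediate from Definition 6.10, since the defining condition involves only the single coordinate $f_{i_0}$ and places no restriction on the other coordinates.
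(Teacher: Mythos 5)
Your proposal is correct and is essentially the paper's own argument: the paper proves this theorem by the remark ``Similar to the proof of Theorem 5.5,'' and that proof is exactly your scheme --- write the class as a product $A_1\times\cdots\times A_n$ (here with $A_{i_0}$ the single-curve class from Theorem 6.6 and $A_j=C^{p_j}(\gamma_j)$ for $j\neq i_0$), then apply Proposition 5.1. Your parenthetical observation that $t_0$ must lie in the interior of $I_{i_0}$ for Theorem 6.6 and Definition 6.5 to apply is a fair reading of a small imprecision in the paper's statement, and does not affect the argument.
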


\begin{proof}
Similar to the proof of Theorem $5.5$.
\end{proof}

\begin{defi}
Let $n \in\{1,2,\dots\}$ and $\gamma_i : I_i \rightarrow \mathbb{C}$ be 
locally injective curves, where $I_i$ are intervals. A function 
$f$ on the disjoint union $\gamma_1^*\cup\dots\cup\gamma_n^* $, $f(z)=f_i(z)$ for $z\in
\gamma_i^*$, is nowhere one sided holomorphically extendable if the functions $f_i$ defined on 
$ \gamma^*_i $ are nowhere one sided holomorphically extendable functions.
\end{defi}

The proof of the following theorem is also similar to the proof of Theorem $5.5$.

\begin{theorem}
Let $n \in\{1,2,\dots\}$, $p_i,q_i \in\{0,1,\dots\}\cup \{\infty\}$ such that $p_i\leq q_i$ for 
$i=1,\dots,n$. Let also $\gamma_i:I_i \rightarrow \mathbb{C}$ be locally 
injective functions of $C^{q_i}(I_i)$, where $I_i$ are intervals. 
The class of nowhere one sided homolomorphically extendable functions belonging 
to $C^{p_1,\dots,p_n}(\gamma_1,\dots,\gamma_n)$ is a dense and $G_\delta$ subset of 
$C^{p_1,\dots,p_n}(\gamma_1,\dots,\gamma_n)$.
\end{theorem}

\begin{defi}
Let $n \in\{1,2,\dots\}$ and $\Omega$ be a bounded domain in $\mathbb{C}$ defined by 
disjoint Jordan curves $\gamma_i:\mathbb{R} \rightarrow \mathbb{C}$, where each $\gamma_i$ is 
continuous and periodic. Let also $t_0\in \mathbb{R}$. A function $f:\overline{\Omega}\rightarrow\mathbb{C}$ is non one sided holomorphically extendable at $(t_0,\gamma_{i_0}(t_0))$ outside $\Omega$ if there exists no pair of a Jordan domain $G\subset\mathbb{C}\setminus\overline{\Omega}$, such that $\partial G$ contains a Jordan arc of 
$\gamma_{i_0}^*$, $\gamma_{i_0}([t_1,t_2])$, $t_1<t_0<t_2$, $t_1,t_2 \in \mathbb{R}$ and a 
continuous function $F:\Omega\cup\gamma((t_1,t_2))$, 
which is holomorphic on $G$ and $F|_{\gamma_{i_0}((t_1,t_2))}=f|_{\gamma_{i_0}((t_1,t_2))}$.
\end{defi}

\begin{theorem}
Let $p\in \{0,1,\dots\}\cup \{\infty\}$, $n \in\{1,2,\dots\}$ and $\Omega$ be a bounded 
domain in $\mathbb{C}$ defined by 
disjoint Jordan curves $\gamma_i:\mathbb{R} \rightarrow \mathbb{C}$, where each $\gamma_i$ is 
continuous and periodic. Let also $t_0\in \mathbb{R}$. The class of non one sided holomorphically 
extendable at $(t_0,\gamma_{i_0}(t_0))$ outside $\Omega$ functions of $A^p(\Omega)$ is a 
dense and $G_\delta$ subset of $A^p(\Omega)$.
\end{theorem}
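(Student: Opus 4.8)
The plan is to run the Baire-category scheme of Theorem 6.6 inside the space $A^p(\Omega)$ exactly as in the proof of Theorem 5.18, with the auxiliary Jordan domains now placed in the exterior region $\mathbb{C}\setminus\overline{\Omega}$. First I would adapt the geometric construction preceding Proposition 6.4 to produce a denumerable family $G_n$ of Jordan domains, each contained in $\mathbb{C}\setminus\overline{\Omega}$, each having in its boundary an arc $\gamma_{i_0}([t_1,t_2])$ with $t_1<t_0<t_2$, and enjoying the covering property that every Jordan domain $G\subset\mathbb{C}\setminus\overline{\Omega}$ whose boundary contains such an arc and admits the required extension contains some $\overline{G_{n_0}}$ (up to the boundary arc). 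This is possible because near $\gamma_{i_0}(t_0)$ the Jordan curve $\gamma_{i_0}^*$ locally separates a neighbourhood into the $\Omega$-side and the exterior side, so the points $P,Q$ and the connecting simple polygonal line with vertices in $\mathbb{Q}+i\mathbb{Q}$ may all be chosen in the open connected exterior adjacent to the arc.

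Next, for each $G_n$ and each $M\in\{1,2,\dots\}$ I define $A(p,G_n,M)$ to be the set of $f\in A^p(\Omega)$ admitting a continuous $F$ on $G_n\cup\gamma_{i_0}((t_1,t_2))$, holomorphic on $G_n$, with $\|F\|_\infty\le M$ and $F=f$ on the arc. To prove closedness I would transplant $G_n$ to the unit disc by a homeomorphism $\Psi$ of $D\cup J$ onto $G_n\cup\gamma_{i_0}((t_1,t_2))$ holomorphic on $D$ (Caratheodory--Osgood, with $J$ an open arc of $\partial D$), apply Montel's theorem to the transplanted bounded holomorphic extensions $F_n\circ\Psi$, and reproduce verbatim the Poisson-kernel estimate of Proposition 6.4 to get that these transplants form a uniformly Cauchy sequence on each closed circular sector abutting the boundary arc; here the convergence of the boundary data on the arc is supplied by the fact that convergence in $A^p(\Omega)$ forces uniform convergence on $\overline{\Omega}$, hence on the closed arc. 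This yields a continuous limiting extension and shows $A(p,G_n,M)$ is closed.

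For empty interior I would argue exactly as in Theorem 5.18. If $f$ lies in the interior there are $b\in\{0,1,2,\dots\}$ and $\delta>0$ with every $g$ satisfying $\sup_{\overline{\Omega}}|f^{(j)}-g^{(j)}|<\delta$, $0\le j\le b$, lying in $A(p,G_n,M)$. Choosing $w\in G_n$ --- which automatically satisfies $w\notin\overline{\Omega}$ since $G_n\subset\mathbb{C}\setminus\overline{\Omega}$ --- the perturbation $\frac{a}{2(z-w)}$ belongs to $A^p(\Omega)$, and picking
$$0<a<\delta\min\Big\{\inf_{z\in\overline{\Omega}}|z-w|,\ \inf_{z\in\overline{\Omega}}|z-w|^2,\ \dots,\ \tfrac1{b!}\inf_{z\in\overline{\Omega}}|z-w|^{b+1}\Big\}$$
keeps $h=f+\frac{a}{2(z-w)}$ inside the neighbourhood, so $h\in A(p,G_n,M)$ with a bounded extension $H$. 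Transplanting by $\Psi$ and using that $H-F-\frac{a}{2(\,\cdot\,-w)}$ vanishes on the arc, the Schwarz Reflection Principle together with analytic continuation forces $H=F+\frac{a}{2(z-w)}$ on $G_n$, which is unbounded near $w\in G_n$, contradicting $\|H\|_\infty\le M$.

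Finally I would check that $\bigcap_{n=1}^\infty\bigcap_{M=1}^\infty\big(A^p(\Omega)\setminus A(p,G_n,M)\big)$ is precisely the asserted class: the inclusion $\supseteq$ is immediate, while for $\subseteq$ any admissible pair $(G,F)$ contains, by the covering property, some $G_{n_0}$ on which $F$ is bounded by an integer $M$, so the extendable $f$ lies in $A(p,G_{n_0},M)$. Baire's theorem then delivers a dense $G_\delta$. The main obstacle I anticipate is the geometric step: making the construction of the $G_n$ and the verification of the covering property respect the constraint $G_n\subset\mathbb{C}\setminus\overline{\Omega}$, i.e. confining all auxiliary points and polygonal arcs to the exterior side of $\gamma_{i_0}^*$ near $t_0$; once this is secured, the analytic parts are the combined routines of Proposition 6.4 and Theorem 5.18.
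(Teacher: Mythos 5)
Your proposal is correct and follows essentially the same route as the paper: the paper's own proof of this theorem is literally the instruction to combine the proofs of Proposition 6.4 and Theorem 6.6 (with the $A^p(\Omega)$-specific adjustments already present in Theorem 5.18), which is exactly what you carry out. Your three ingredients --- exterior rational-data Jordan domains $G_n\subset\mathbb{C}\setminus\overline{\Omega}$ with the covering property, closedness of $A(p,G_n,M)$ via Osgood--Caratheodory, Montel and the Poisson-kernel estimate, and empty interior via the perturbation $\frac{a}{2(z-w)}$ with $w\in G_n$ (hence $w\notin\overline{\Omega}$) killed by Schwarz reflection --- together with Baire's theorem are precisely the intended combination.
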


\begin{proof}
The proof is simply a combination of proofs similar to those of Proposition $6.4$ and Theorem $6.6$.
\end{proof}

\begin{defi}
Let $n \in\{1,2,\dots\}$ and $\Omega$ be a bounded domain in $\mathbb{C}$ defined by 
disjoint Jordan curves $\gamma_i:\mathbb{R} \rightarrow \mathbb{C}$, where each $\gamma_i$ 
is continuous and periodic. A function $f:\overline{\Omega} \rightarrow
\mathbb{C}$ is nowhere one sided holomorphically extendable outside $\Omega$ if $f$ is non
one-sided holomorphically extendable at $(t_0,\gamma_{i_0}(t_0))$  outside $\Omega$ for every 
$t_0\in \mathbb{R}$ and $i_0=1,2,\dots,n$.
\end{defi}

Combining Theorem $6.14$ for a dense in $\mathbb{R}$ sequence $t_n$ 
and Baire's theorem, we obtain the following:

\begin{theorem}
Let $p\in \{0,1,\dots\}\cup \{\infty\}$ and let $\Omega$ be a bounded 
domain in $\mathbb{C}$ defined by a finite number of
disjoint Jordan curves. The class of nowhere one sided holomorphically 
extendable functions outside of $\Omega$ is a dense and $G_\delta$ 
subset of $A^p(\Omega)$.
\end{theorem}

\begin{remark}
In the above statements the functions $ f $ are defined on $ \overline{\Omega} $. However, for 
$ f \in A^p(\Omega) $, by the maximal modulus principle, there is a 
one to one correspondence between the function $ f $ and $ f|_{\partial\Omega} $. Therefore we 
could state the previous result considering $ A^p(\Omega) $ 
restricted to $ \partial\Omega $.
\end{remark}

\section{Removability of singularities in the spaces $A^p$ and the $p$-continuous analytic
capacities. A dichotomy result}

Let $\Omega$ denote an open and bounded subset of $\mathbb{C}$ and $L$ be a compact subset of 
$\Omega$. Consider the open set $G=\Omega\setminus L$, $p\in \{0,1,\dots\}\cup\{\infty\}$ and $f_0\in A^p(G)$. Then there are two cases: 
\begin{enumerate}[(i)]
\item Either there exists a function $F_0\in A^p(\Omega)$, such that $F_0|_G=f_0$
\item or there exists no $F_0\in A^p(\Omega)$, such that $F_0|_G=f_0$.
\end{enumerate}

\begin{theorem}
Let $p\in \{0,1,\dots\}\cup\{\infty\}$, $\Omega$ be an open and bounded subset of 
$\mathbb{C}$ and $L$ be a compact subset of $\Omega$. Let also 
$G=\Omega\setminus L$. Suppose that there is a function $f_0\in A^p(G)$ 
for which there exists no $F_0\in A^p(\Omega)$, such that $F_0|_G=f_0$. Then the set of 
functions $f\in A^p(G)$ for which there exists no $F\in A^p(\Omega)$ such that $F|_G=f$ is an open and dense subset of $A^p(G)$.
\end{theorem}

\begin{proof}
Let $A(p,G)$ be the set of functions $ f\in A^p(G)$ for which 
there exists a continuous function $F\in A^p(\Omega)$ such that $F|_G=f$.

First, we will prove that the set $A(p,G)$ is a closed subset of $ A^p(G)$. Let 
$ (h_n)_{n\geq 1} $ be a sequence in $A(p,G)$ 
converging in the topology of $A^p(G)$ to a function $h\in A^p(G)$. By the maximum modulus principle, the extensions $H_n$ of $h_n$ form a uniformly Cauchy sequence on $\overline{\Omega}$. Thus the limit $H$ of $H_n$ on $\overline{\Omega}$ is an extension of $h$. Therefore $g\in A(p,G)$ and $A(p,G)$ is a closed subset of $A^p(G)$. 

Now we will prove that the set $A(p,G)$ has empty interior in $A^p(G)$. If $A(p,G)$
does not have empty interior in $A^p(G)$, then there is a function $f$ in the interior of $A(p,G)$ and $l\in\{0,1,\dots\}$, $l\leq p$ and $d>0$, such that
$$\{f\in  A^p(G): \sup\limits_{z\in \overline{G}}|f^{(j)}(z)-g^{(j)}(z)|<d, 0\leq j \leq l\}
\subset A(p,G).$$
Obviously, $f_0$ is not identically equal to
zero which implies $$m=max\{\sup\limits_{z\in \overline{G}}|{f_0}^{(j)}(z)|,j=0,1,\dots,l\}>0.$$
From the definitions the function $h(z)=f(z)+\dfrac{d}{2m}f_0(z)$, $z\in \overline{G}$ belongs to 
$A(p,G)$.
Since the functions $f,h$ belong to $A(p,G)$, there are functions $F,H \in A^p(\Omega)$, 
such that $F|_G=f$, $H|_G=h$. Then $\dfrac{2m}{d}(H(z)-F(z))$ belongs to 
$A^p(\Omega)$ and is equal to $f_0$ in $G$ which contradicts our hypothesis. Thus
the set $A(p,G)$ has empty interior in $A^p(G)$.

The set of functions $f\in A^p(G)$ for which there exists no $F\in A^p(\Omega)$, such that $F|_G=f$ coincides with the set $$ A^p(G)\setminus A(p,G)$$ which was proved to be open and dense.
\end{proof}

\begin{remark}
If the interior of $L$ in $\mathbb{C}$ is non-empty, then there always exists a 
function $f_0\in A^p(G)$ for which there does not exist a function $F_0\in A^p(\Omega)$, such that $F_0|_G=f_0|_G$. Indeed let $w\in L^{o}$; then $f_0=\dfrac{1}{z-w}$ belongs to $A^p(G)$ but it can not have an extension in $A^p(\Omega)$. Thus the class 
of functions $f\in A^p(G)$ for which there exists no $F\in A^p(\Omega)$ such that $F|_G=f$ is dense and open in $A^p(G)$.
\end{remark}

\begin{remark}
From the previous results we have a dichotomy: Either every $f\in A^p(G)$ has an extension in 
$A^p(\Omega)$ or generically all functions $f\in A^p(G)$ do not admit any extension 
in $A^p(\Omega)$. The first case holds if and only if $ a_p(L)=0 $ and the second case if and only if $ a_p(L)>0 $ (Theorem $3.12$).
\end{remark}

\begin{remark}
The results of this section can easily be extended to unbounded open sets $ \Omega\subset 
\mathbb{C} $ with the only difference in the definition of the topology of $ A^p(\Omega) $. 
The topology of $ A^p(\Omega) $ is defined by the denumerable family of seminorms 
$$ \sup_{z\in \Omega, |z|\leq n}|f^{(l)}(z)| ,l=0,1,2,\cdots ,n=0,1,2,\cdots .$$
For $ p<\infty $ and $ \Omega $ unbounded $ A^p(\Omega) $ is a Fr{\'e}chet space, while if $ \Omega $ is bounded it is a Banach space.
\end{remark}

\begin{remark}
In a similar way we can prove that if $L$ is a compact set contained in the open set $U$, then either every
$f\in \tilde A^p(U\setminus L) $ has an extension in $\tilde A^p(U)$ or generically every 
$f\in \tilde A^p(U\setminus L) $ does not have an extension in $\tilde A^p(U)$, $p\in\{0,1,2,\cdots\}\cup\{\infty\}$. The first horn of this dichotomy holds if and only if 
$\tilde a_p(L)=0$ which is equivalent with the fact that the interior of $ L $ is void in $\mathbb{C}$ (Theorem $3.16$).
\end{remark}

Now we present some local versions of the results of section $7$.

\begin{defi}
Let $L$ be a compact subset of $ \mathbb{C}$ and $U$ be an open subset of $ \mathbb{C}$, such that $L\subseteq U$. Let also $z_0 \in \partial L$. A function $f \in H(U\setminus L)$ is extendable at $ z_0 $ if there exists $  r>0 $ and $ F\in H(D(z_0, r)) $ such that $ F|_{(U\setminus L)\cap D(z_0,r)}=f|_{(U\setminus L)\cap D(z_0,r)} $. Otherwise, we say that $f$ is not extendable at $z_0$.
\end{defi}

Below we will use the above definition of extendability.

\begin{prop}
Let $L$ be a compact subset of $ \mathbb{C}$ and $U$ be an open subset of $ \mathbb{C}$, such that $L\subseteq U$. Let also $ M $ and $r$ be positive real numbers, $p\in \{0,1,2, \cdots\}\cup \{\infty\}$ and $ z_0 \in \partial L $. The set $$E_{M,p,U,L,z_0,r}=\{f\in A^p(U\setminus L): there\ exists\ F\in H(D(z_0, r))\ such$$ $$ that\ F|_{(U\setminus L)\cap D(z_0,r)}=f|_{(U\setminus L)\cap D(z_0,r)} \ and\ \\
\parallel F\parallel_{\infty}=\sup\limits_{z\in D(z_0,r)}|F(z)|\leq M \}$$ is a closed subset of $A^p(U\setminus L)$. Also, if there exists $f_0\in A^p(U\setminus L)$ which is not extendable at $z_0$, then the interior of $E_{M,p,U,L,z_0,r}$ is void in $A^p(U\setminus L)$.
\end{prop}

\begin{proof}
We will first prove that the set $E_{M,p,U,L,z_0,r}$ is a closed subset of $ A^p(U\setminus L)$. Let 
$ (f_n)_{n\geq 1} $ be a sequence in $E_{M,p,U,L,z_0,r}$ 
converging in the topology of $A^p(U\setminus L)$ to a function $ f\in A^p(U\setminus L)$. This implies that $f_n$ converges uniformly on $\overline{U\setminus L}$ to $f$ and that there exists a sequence $ (F_n)_{n\geq 1} $ in $H(D(z_0, r))$ such that $F_n|_{(U\setminus L)\cap D(z_0,r)}=f_n|_{(U\setminus L)\cap D(z_0,r)}$ and $\parallel F\parallel_{\infty}\leq M$ for every $ n\geq 1 $. 
By Montel's theorem there exists
a subsequence of $(F_n)$, $(F_{k_n})$, which converges uniformly on the compact subsets of
$D(z_0,r)$ to a function $F$ which is holomorphic and bounded by $M$ on $D(z_0,r)$. 
Since $F_{k_n}$ converges 
to $f$ on $(U\setminus L)\cap D(z_0,r)$, the functions $f$ and $F$ are equal on $(U\setminus L)\cap D(z_0,r)$. Thus $f$ belongs to $E_{M,p,U,L,z_0,r}$ and $E_{M,p,U,L,z_0,r}$ is a closed subset of $ A^p(U\setminus L)$.\\
If there exists $f_0\in A^p(U\setminus L)$ which is not extendable at $z_0$,  the interior of $E_{M,p,U,L,z_0,r}$ is void in $A^p(U\setminus L)$, the proof of which is similar to the proof of Theorem $7.1$.
\end{proof}

Here we have one more dichotomy which is a local version of the first one.\\

\begin{theorem} Let $L$ be a compact subset of $ \mathbb{C}$ and $U$ be an open subset of $ \mathbb{C}$, such that $L\subseteq U$ and let $p\in \{0,1,2, \cdots\}\cup \{\infty\}$, $ z_0 \in \partial L $. The set  
$E_{p,U,L,z_0}=\bigcup\limits_{M=1}^{\infty}\bigcup\limits_{n=1}^{\infty} E_{M,p,U,L,z_0,\frac{1}{n}}$ is the set of extendable functions of $A^p(U\setminus L)$ at $ z_0 $. Then 
\begin{enumerate}[(i)]
\item either every function $f\in A^p(U\setminus L)$ is extendable at $ z_0 $
\item or generically all functions $f\in A^p(U\setminus L)$ are not extendable at $ z_0 $.
\end{enumerate}
\end{theorem}

\begin{proof}
If $(i)$ is not true, then Proposition $7.7$ shows that $E_{M,p,U,L,z_0,\frac{1}{n}}$ is closed with empty interior for all natural numbers $ n\geq 1, M\geq 1  $. Then
$$A^p(U\setminus L)\setminus E_{p,U,L,z_0}=\bigcap\limits_{M=1}^{\infty}\bigcap\limits_{n=1}^{\infty} (A^p(U\setminus L)\setminus E_{M,p,U,L,z_0,\frac{1}{n}})$$
is the intersection of a countable number of open and dense subsets of $A^p(U\setminus L)$ and Baire's Theorem shows that $A^p(U\setminus L)\setminus E_{p,U,L,z_0}$, which coincides with the set of non extendable functions of $A^p(U\setminus L)$ at $ z_0 $, is a dense and $G_{\delta}$ subset of $A^p(U\setminus L)$.
\end{proof}

In what follows we compare two notions: local extendability and existence of a holomorphic extension. At first we examine the case of a compact set $L$ with empty interior.

\begin{prop}
Let $L$ be a compact subset of $ \mathbb{C}$ and $U$ be an open subset of $ \mathbb{C}$, such that $L\subseteq U$ and $L^{\circ}=\emptyset$. Let also  $f\in H(U\setminus L)$. Then $f$ is extendable at every $z_0\in \partial L $ if and only if there exists a holomorphic extension $F$ of $f$ on $U$. If additionally $ f\in  A^p(U\setminus L)$ for some $p\in\{0,1,\dots\}\cup\{\infty\}$, then $ F\in A^p(U) $.
\end{prop}

\begin{proof}
If there exists a holomorphic extension $F$ of $f$ on $U$, then obviously $f$ is extendable at every $z_0\in \partial L=L $.

Conversely, if $f$ is extendable at every $z_0\in L $, then for every $z_0\in L $ there exist a positive real number $r_{z_0}$ and a holomorphic function $F_{z_0}$ on $ D(z_0,r_{z_0}) $ such that $D(z_0,r_{z_0})\subseteq U$ and $ F_{z_0}|_{(U\setminus L)\cap D(z_0,r_{z_0})}=f|_{(U\setminus L)\cap D(z_0,r_{z_0})} $. Let $z_1,z_2 \in L$ such that $V=D(z_1,r_{z_1})\cap D(z_2,r_{z_2})\neq \emptyset$. Since $L^{\circ}=\emptyset$, $ V\setminus L $ is a non-empty, open set. Thus $F_{z_1}, F_{z_2}$ are holomorphic on the domain $V$ and coincide with $f$ on $ V\setminus L $. By analytic continuation,  $F_{z_1}=F_{z_2}$ on $V$. So, the function $F$ defined on $ U $ such that $F(z)=F_{z}(z) $ for every $z\in L$ and $F(z)=f(z) $ for every $z\in U\setminus L$ is a holomorphic extension of $ f $ on $U$. Obviously, if $ f\in  A^p(U\setminus L)$, then $ F\in A^p(U) $.
\end{proof}

\begin{remark}
If $L^{\circ}\neq\emptyset$ the equivalence at Proposition $7.9$ is not true. Indeed if $w\in L^{\circ}\neq\emptyset$, then the holomorphic function $ f(z)=\frac{1}{z-w} $ for $z\in U\setminus L$ can not be extended to a holomorphic function on $U$, but it is extendable at every $ z_0\in \partial L $.
\end{remark}

We consider again a compact set $L\subseteq \mathbb{C}$ and an open set $U\subseteq \mathbb{C}$, such that $L\subseteq U$ and a $p\in \{0,1,2, \cdots\}\cup \{\infty\}$. 
Now we want to find a similar connection between $a_p(L)$ and $a_p(L\cap \overline{D(z_0, r)})$; that is, is the condition $a_p(L)=0$ equivalent to the condition $a_p(L\cap \overline{D(z_0, r)})=0$ for all $ z_0\in  L $?

If we suppose that $ L^{\circ}\neq\emptyset $, then there exist $ z_0$ and $r>0$ such that $D(z_0, r)\subseteq L$. Thus $ a_p(L)$ and $a_p(L\cap \overline{D(z_0, r)})$ are strictly positive.

So, we do not need to assume that $ L^{\circ}=\emptyset $, since it follows from both the conditions $a_p(L)=0$ and $a_p(L\cap \overline{D(z_0, r)})=0$ for every $ z_0\in  L $ and for some $r=r_{z_0}>0$. 
Also, the first condition obviously implies the second one.

Probably Theorem $3.6$ holds even for $p\geq 1$. Specifically, if $a_p(L)=0$ and $V$ is an open set, then every function $g\in A^p(V\setminus L)$ belongs to $A^p(V)$.
This leads us to believe that the above conditions are in fact equivalent. However, this will be examined in future papers.

\medskip

\noindent \textbf{Acknowledgement}: We would like to thank A. Borichev, P. Gauthier, J.-P. Kahane, V. Mastrantonis, P. Papasoglu and A. Siskakis for helpful 
communinications.

\noindent
Eleftherios Bolkas, Vassili Nestoridis, Christoforos Panagiotis\\

\noindent
University of Athens\\
Department of Mathematics\\
157 84 Panepistemiopolis\\
Athens\\
Greece\\

\noindent
E-mail addresses:\\
lefterisbolkas@gmail.com (Eleftherios Bolkas)\\
vnestor@math.uoa.gr (Vassili Nestoridis)\\
chris$\_$panagiwtis@hotmail.gr (Christoforos Panagiotis)
\\

\noindent
Michael Papadimitrakis\\

\noindent
University of Crete\\
Department of Mathematics and Applied Mathematics\\
Voutes Campus, GR-700 13, Heraklion\\
Crete\\
Greece\\

\noindent
E-mail address:\\
papadim@math.uoc.gr (Michael Papadimitrakis)\\

\noindent
Current addresses:\\
Bolkas Eleftherios\\
Princeton University, Department of Mathematics, Fine Hall, Washington Road, Princeton, NJ 08544-1000, USA\\
ebolkas@math.princeton.edu\\

\noindent
Panagiotis Christoforos\\
Mathematics Institute, University of Warwick, CV4 7AL, UK\\
C.Panagiotis@warwick.ac.uk\\

\end{document}